 \renewcommand{\epsilon}{\varepsilon}
 \newcommand{\newsection}[1]
  {\section{#1}\setcounter{theorem}{0} \setcounter{equation}{0}\par\noindent}
   \newtheorem{theorem}{Theorem}[section]
   \newtheorem{lemma}[theorem]{Lemma}
 \newtheorem{corr}[theorem]{Corollary}
 \newtheorem{proposition}[theorem]{Proposition}
 \newtheorem{deff}[theorem]{Definition}
 \newtheorem{remark}[theorem]{Remark}
  \numberwithin{equation}{section}
 \newcommand{\bth}{\begin{theorem}}
 \newcommand{\ble}{\begin{lemma}}
 \newcommand{\bcor}{\begin{corr}}
 \newcommand{\bdeff}{\begin{deff}}
 \newcommand{\bprop}{\begin{proposition}}
 \def\be{\begin{equation}}
\def\ee{\end{equation}}
\def\bt{\begin{theorem}}
\def\et{\end{theorem}}
\def\ba{\begin{array}}
\def\ea{\end{array}}
\def\bl{\begin{lemma}}
\def\el{\end{lemma}}
 \newcommand{\ele}{\end{lemma}}
 \newcommand{\ecor}{\end{corr}}
 \newcommand{\edeff}{\end{deff}}
 \newcommand{\eprop}{\end{proposition}}
 \renewcommand{\Pi}{\varPi}
 \renewcommand{\epsilon}{\varepsilon}
\newcommand\tbbint{{-\mkern -16mu\int}}
\newcommand\dbbint{{-\mkern -19mu\int}}
\newcommand\bbint{
{\mathchoice{\dbbint}{\tbbint}{\tbbint}{\tbbint}}
}
\title[Global Solution for  the incompressible  Navier-Stokes equations] { Global  Solution for the incompressible  Navier-Stokes equations with a class of large data in $BMO^{-1}(\mathbb{R}^3)$  }
\date{\today}
\begin{document}
\maketitle

\centerline{
\author{
Du Yi
  \footnote{Department of Mathematical Sciences, Jinan University, Guangzhou 510632, China.
 {\it Email: duyidy@gmail.com; duyidy@jnu.edu.cn}}
 }
 \and
  Zhou Yi
  \footnote{ *Corresponding Author: School of Mathematical Sciences, Fudan University, Shanghai, China. and Department of Mathematical Sciences, Jinan University, Guangzhou 510632, China.
 {\it Email: yizhou@fudan.edu.cn}
 }
  }

\begin{abstract}In this paper, we shall establish  the global well-posedness, the space-time analyticity of the Navier-Stokes equations
 for a class of  large  periodic data  $u_0 \in BMO^{-1}(\mathbb{R}^3)$.
  This improves the classical result of  Koch \& Tataru  \cite{koch-tataru},
  for the  global well-posedness  with small initial data  $u_0 \in BMO^{-1}(\mathbb{R}^n)$.
%Subsequently,  the spatial and time  analyticity of the Koch \& Tataru solution  have been presented by  Germain-Pavlovic'-Stffilani \cite{germain} and the first author \cite{du1} when the initial data $u_0 \in BMO^{-1}(\mathbb{R}^n)$ small enough.

%
%and $ \|u_0-\lambda \Delta^{-1}\nabla\times u_0\|_{BMO (\mathbb{R}^n)}\leq \epsilon_0 <<1$ where $\lambda$ and $\epsilon_0$ are fixed  constants, $M_0$ is a uniform constant independent of $\epsilon_0$.
\end{abstract}
{\bf Keywords: }Global regularity, Navier-Stokes Equations,  Large data, Koch-Tataru solution.

\newsection{Introduction}
This paper is devoted to study  the following incompressible Navier-Stokes equations with the periodic data  on the domain  $(x,t) \in \mathbb{R}^3\times R^+$.
\begin{equation}\label{1.1}
\begin{cases}
U_t+U\cdot\nabla U- \Delta U+\nabla P=0, \qquad \mbox{in} \ \mathbb{R}^3\times (0,+\infty),
\\
\nabla \cdot U=0, \\
 U(x,t)|_{t=0}= u_0(x).
\end{cases}
\end{equation}

At the very beginning, we shall recall some correlated research history about the incompressible Navier-Stokes equations.
In \cite{leray2},  Leray  proved the local well-posedness for  strong solutions and  for any finite square-integrable initial data  there exists  a (possibly not unique) global in time weak
solution in $\mathbb{R}^n$.  Moreover, for the  case of two space dimensions, he proved in \cite{leray1} the uniqueness of the weak solution.
Subsequently, in the work of Fujita-Kato\cite{fujita-kato}, they proved the local well-posedness for strong solutions to  the Navier-Stokes equations in a scaling invariant  space  $\dot{H}^{\frac{n}2-1}$. The scaling-invariance  in the context of the Navier-Stokes equations is  as follows.
Define
\begin{equation}\label{1.2}
(U_\lambda, P_\lambda)(x,t)= (\lambda U(\lambda x,\lambda^2t),\lambda^2P(\lambda x,\lambda^2t)),
\end{equation}
if the pair $(U(x,t), P(x,t))$  solves the incompressible
 Navier-Stokes equations, then $(U_\lambda(x,t), $ $    P_\lambda(x,t))$ is also a pair of solution to  the incompressible Navier-Stokes equations with initial data $U_\lambda(x,0)=\lambda u_0(\lambda x)$.
The spaces which are invariant under such a scaling are also called critical spaces.
The study of the Navier-Stokes equations in critical spaces was initiated by Kato\cite{kato} and
continued by many authors, see \cite{Cannone, giga,planchon} etc. In 2001 Koch-Tataru \cite{koch-tataru} proved the
existence of solutions to Navier-Stokes equations  in $\mathbb{R}^n$ when the initial data is in $BMO^{-1}$,  see also \cite{Lemarie-Rieusset}.  Subsequently, the space and time analyticity of the Koch-Tataru solution  have been studied by \cite{germain,du1} and \cite{miura-sawada}.    The space $BMO^{-1}$  has a special role since it  is the largest
critical space for Navier-Stokes equations with  well-posedness  results available \cite{bourgain}.
 Hereafter, we call the solution presented by Koch \& Tataru \cite{koch-tataru} as Koch-Tataru solution.

The results listed above are mostly concerned the  system \eqref{1.1}  with the  initial data small enough. To prove whether global existence or finite time blow up for a   large data is a famous open problem. Recently, in Lei-Lin-Zhou \cite{lei-lin-zhou} the authors have
 proved  the global well-posedness  with a class of  large data in the energy space which includes the Beltrami flow.
This paper is to study the global well-posedness for the 3D incompressible Navier-Stokes equations with general  initial data in the largest critical space $BMO^{-1}$.

%In this paper, we study the system on a periodic domain reads as
%We study the 3D incompressible Navier-Stokes equations in $ [-\pi,\pi]^3\times \mathbb{R}_+=T^3\times \mathbb{R}_+$

Before stating the main results, we shall  give  some definitions and notations.  Let
\begin{equation}\label{Q}
Q(y_0,r)=B(y_0,r)\times (0,r^2]
\end{equation}
be the space-time ball. For $(x,t)\in Q(y_0,r)$ means $x\in B(y_0,r)$ and $0< t\leq  r^2$,
where $B(y_0,r)\subset{\mathbb{R}^n}$ is a n-dimensional space ball  centered at $y_0\in \mathbb{R}^n$ and radius $r$.
\begin{deff}Let f be a tempered distribution,
 $W$ be the solution of $W_t-\Delta W=0$ with initial data $f$. Denote
\begin{equation}\nonumber
[f]_{BMO(\mathbb{R}^n)}=\sup\limits_{y_0\in \mathbb{R}^n  }\bigg(r^{-n}\int_{Q(y_0,r)} |\nabla W|^2 dtdy \bigg)^{\frac12},
\end{equation}
we say the function $f\in L_{loc}^1(\mathbb{R}^n)$ is in $BMO$ if the semi-norm $[f]_{BMO}$ is finite.

If there exist $g_i\in  BMO$ and $f=\sum_{i=1}^n \frac{\partial g_i}{\partial x_i}$, denote
\begin{equation}\nonumber
%\|f\|_{BMO^{-1}}=\sup\limits_{y_0\in \mathbb{T}^3,B(x,r)\subset\mathbb{T}^3 }\bigg(r^{-n}\int_{Q(y_0,r)}|W|^2dtdy\bigg)^\frac12.
[f]_{BMO^{-1}(\mathbb{R}^n)}=\inf  \sum_{i=1}^n\|g_i\|_{BMO(\mathbb{R}^n)},
\end{equation}
we say  $f\in  BMO^{-1}$ if   the above norm $[f]_{BMO^{-1}}$ is finite. It is easy to see that an equivalent definition for $BMO^{-1}$ is
\begin{equation}\nonumber
[f]_{BMO^{-1}(\mathbb{R}^n)}=\sup\limits_{y_0\in \mathbb{R}^n  }\bigg(r^{-n}\int_{Q(y_0,r)} |W|^2 dtdy \bigg)^{\frac12}.
\end{equation}
Clearly the divergence of a vector field with components in $BMO$ is in
$BMO^{-1}$. See more details in Koch-Tataru \cite{koch-tataru} and Chap11 in \cite{Lemarie-Rieusset}.

Similarly, we define the space $BMO^{-2} $.
If there exist $\bar{g}_i\in  BMO^{-1}$ and $f=\sum_{i=1}^n \frac{\partial \bar{g}_i}{\partial x_i}$, denote
\begin{equation}\nonumber
%\|f\|_{BMO^{-1}}=\sup\limits_{y_0\in \mathbb{T}^3,B(x,r)\subset\mathbb{T}^3 }\bigg(r^{-n}\int_{Q(y_0,r)}|W|^2dtdy\bigg)^\frac12.
[f]_{BMO^{-2}(\mathbb{R}^n)}=\inf  \sum_{i=1}^n\|\bar{g}_i\|_{BMO^{-1}(\mathbb{R}^n)},
\end{equation}
we say  $f\in  BMO^{-2}$ if   the above norm $[f]_{BMO^{-2}}$ is finite.

 \end{deff}

%\begin{deff}
%Similarly, if  there exists $h=(h_1,\cdots,h_n)\in BMO(\mathbb{T}^3)$ with  $f_i=\sum_{j,k=1}^n\frac{\partial^2 h_i}{\partial_j\partial_k}$, denote
%\begin{equation}\nonumber
%[f]_{BMO^{-2}(\mathbb{T}^3)}=\inf  \sum_{i=1}^3\|h_i\|_{BMO},
%\end{equation}
%we say the function $f$ is in $BMO^{-2}(\mathbb{T}^3)$ if the semi-norm $[f]_{BMO}$ is finite.
% \end{deff}

In Koch\& Tataru \cite{koch-tataru}, the following results have been presented:
\begin{theorem}\label{koch-tataru}
(Koch-Tataru\cite{koch-tataru}). If  $\|u_0\|_{BMO^{-1}}$ is small enough,  then the  equation  \eqref{1.1}  admits a unique pair of global solution.  \end{theorem}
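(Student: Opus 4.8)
The plan is to recast \eqref{1.1} in its mild (integral) form and solve it by a contraction argument in the function space of Koch and Tataru. Applying the Helmholtz--Leray projector $\mathbb{P}=\mathrm{Id}+\nabla(-\Delta)^{-1}\ddiv$ and Duhamel's formula, a mild solution of \eqref{1.1} is a fixed point of $U=e^{t\Delta}u_0+B(U,U)$, where
\[
B(U,V)(t):=-\int_0^t e^{(t-s)\Delta}\,\mathbb{P}\,\ddiv\bigl(U(s)\otimes V(s)\bigr)\,ds .
\]
We work in the Banach space $X$ of divergence-free space--time fields equipped with
\[
\|U\|_X:=\sup_{t>0}t^{1/2}\|U(t)\|_{L^\infty(\mathbb{R}^n)}+\sup_{y_0\in\mathbb{R}^n,\ r>0}\Bigl(r^{-n}\int_{Q(y_0,r)}|U|^2\,dy\,dt\Bigr)^{1/2}.
\]
The linear estimate $\|e^{t\Delta}u_0\|_X\le C_0[u_0]_{BMO^{-1}}$ holds: the Carleson (second) term is exactly $[u_0]_{BMO^{-1}}$ by the equivalent characterization in the Definition above, while the sup-norm (first) term is dominated by it via standard heat-kernel estimates (bound $|e^{t\Delta}u_0(x)|$ by local $L^2$-means of the heat extension over dyadically enlarged parabolic cylinders around $(x,t)$, each controlled by $[u_0]_{BMO^{-1}}$, and sum the Gaussian tails).

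The core is the bilinear bound $\|B(U,V)\|_X\le C_1\|U\|_X\|V\|_X$, which rests on the pointwise estimate $|K_\tau(z)|\lesssim(\tau^{1/2}+|z|)^{-n-1}$ for the convolution kernel $K_\tau$ of $e^{\tau\Delta}\mathbb{P}\,\ddiv$, in particular $\|K_\tau\|_{L^1}\lesssim\tau^{-1/2}$. For the sup-norm part of $B(U,V)(t)$ one splits the time integral at $s=t/2$: on $(t/2,t)$ use $\|(U\otimes V)(s)\|_{L^\infty}\lesssim\|U\|_X\|V\|_X\,s^{-1}\lesssim\|U\|_X\|V\|_X\,t^{-1}$ and $\int_{t/2}^t(t-s)^{-1/2}\,ds\lesssim t^{1/2}$; on $(0,t/2)$ the crude bound diverges, so instead apply the Cauchy--Schwarz inequality in the positive measure $K_{t-s}(x-y)\,dy\,ds$ and reduce to the inequality $\int_0^{t/2}\!\int K_{t-s}(x-y)|U(s,y)|^2\,dy\,ds\lesssim t^{-1/2}\|U\|_X^2$, which follows by cutting $\{|x-y|\sim 2^k t^{1/2}\}$ into dyadic shells and invoking the Carleson condition on $U$ shell by shell. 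For the Carleson part one fixes a test cylinder $Q(y_0,R)$ and splits $U\otimes V$ into its restriction to the doubled cylinder $Q(y_0,2R)$ and the complementary ``far'' piece; the far piece is estimated pointwise on $Q(y_0,R)$ by $R^{-1}\|U\|_X\|V\|_X$ by summing the decay $|K_\tau(x-y)|\lesssim|x-y|^{-n-1}$ over dyadic spatial annuli $2^jR\le|x-y|<2^{j+1}R$ and then applying the Carleson bound on each annulus, which upon squaring and integrating over $Q(y_0,R)$ yields the required bound; the near piece is handled through a local parabolic energy inequality for $\partial_t-\Delta$, combined with a careful dyadic decomposition of the time integral designed to absorb the growth of $U\otimes V$ as $s\to 0^+$ by playing the sup-norm and the Carleson components of $\|U\|_X$, $\|V\|_X$ against each other.

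Granting these two estimates, the theorem follows from the classical abstract lemma on bilinear equations in Banach spaces: if $B\colon X\times X\to X$ is bilinear with $\|B(U,V)\|_X\le C_1\|U\|_X\|V\|_X$ and $\|e^{t\Delta}u_0\|_X\le\eta$ with $4C_1\eta<1$, then the Picard iterates converge in $X$ to the unique fixed point $U$, which satisfies $\|U\|_X\le 2\eta$; uniqueness in this ball is immediate, since $U\mapsto e^{t\Delta}u_0+B(U,U)$ is then a contraction there. Choosing $[u_0]_{BMO^{-1}}$ so small that $4C_0C_1[u_0]_{BMO^{-1}}<1$ completes the proof. The principal obstacle is the Carleson part of the bilinear estimate, and within it the treatment of the near piece: the naive bounds diverge as the source time tends to zero, so one must exploit simultaneously both components of the Koch--Tataru norm and organize the resulting dyadic sums with care — this is the technical heart of the argument, whereas the Leray-projection bookkeeping, the kernel bounds, and the completeness of $X$ are routine.
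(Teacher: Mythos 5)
Your proposal is correct and follows essentially the same route as the standard Koch--Tataru argument that the paper reproduces in Section 3: the linear estimate is the paper's Proposition 3.2 (proved there by exactly your dyadic Gaussian-tail argument), and your bilinear estimate — kernel bound $(\sqrt{\tau}+|z|)^{-n-1}$, time-splitting at $t/2$, Cauchy--Schwarz against the kernel for small times, and a near/far cut with a local parabolic energy inequality for the Carleson part — is the content of Lemma 2.1 and Proposition 3.6, the only cosmetic difference being that the paper routes the product $U\otimes V$ through the intermediate $\mathbb{Y}$-norm rather than estimating $B(U,V)$ directly. The paper itself merely cites this theorem, so your outline supplies nothing inconsistent with it.
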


Our main result states as following:
\begin{theorem}\label{main theorem1}
Let  $\lambda$ , $0<b<1$ be  two arbitrary given constants and  $M_0$ be an arbitrary positive constant,  suppose that the initial data $u_0$ of system \eqref{1.1} is a periodic function with
\begin{equation}\label{1.4}
\int_{\mathbb{T}^3} u_0(y)dy=0,
\end{equation}
and
\begin{equation}\label{1.5}
\|u_0\|_{BMO^{-1}(\mathbb{R}^3)}\leq M_0,
\end{equation}
then there exists a small constant $\varepsilon=\varepsilon (b, M_0)$ which depends on $M_0$ and $b$, but independent of  $\lambda$ , such that the system \eqref{1.1} admits a unique pair of  global periodic solutions  provided
\begin{equation}\label{1.6}
\| \nabla\times u_0-\lambda u_0\|_{BMO^{-2}(\mathbb{R}^3)}\leq \varepsilon {\left \langle \lambda \right \rangle}^{-b},
\end{equation}
where $\left \langle \lambda \right \rangle=\sqrt{1+{\lambda}^2}$.

Here and hereafter,  $\mathbb{T}^3$ is a periodic domain, without loss of generality, in this paper we take $\mathbb{T}^3=[-\pi,\pi]^3.$
\end{theorem}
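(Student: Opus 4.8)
The strategy is a perturbative one built around the near-Beltrami structure. Write $u_0 = v_0 + w_0$ where $v_0$ captures the "Beltrami part" and $w_0$ is genuinely small in $BMO^{-1}$. Precisely, since $\nabla\times u_0 - \lambda u_0 \in BMO^{-2}$ is small, I would define $v_0$ by solving $\nabla \times v_0 = \lambda v_0$, $\nabla\cdot v_0 = 0$ on $\mathbb{T}^3$ with the same low-frequency data as $u_0$; this is the projection onto the eigenspaces of $\mathrm{curl}$ with eigenvalue $\lambda$ (on the periodic torus curl has pure point spectrum, so this is well-defined once $\lambda$ is an eigenvalue, and one handles the general $\lambda$ by the curl$-\lambda$ shift). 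Then $w_0 = u_0 - v_0$ satisfies, by the definition of $BMO^{-2}$ and a resolvent bound for $(\mathrm{curl}-\lambda)^{-1}$ acting $BMO^{-2}\to BMO^{-1}$, an estimate $\|w_0\|_{BMO^{-1}} \lesssim \langle\lambda\rangle \cdot \|\nabla\times u_0 - \lambda u_0\|_{BMO^{-2}} \lesssim \varepsilon\langle\lambda\rangle^{1-b}$, which is \emph{not} small. So a more careful splitting is needed: the point is that the exact Beltrami flow $e^{-\lambda^2 t}v_0$ (which solves Navier–Stokes with zero pressure and zero nonlinear interaction since $v_0\times(\nabla\times v_0) = \lambda v_0\times v_0 = 0$) is an exact solution of the \emph{linear} heat-damped system, so I would look for $U = e^{-\lambda^2 t} V + u$ where $V$ is a suitable extension of $v_0$ and $u$ solves a perturbed Navier–Stokes system.

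Substituting $U = \bar U + u$ with $\bar U$ the Beltrami solution, $u$ satisfies
\begin{equation}\nonumber
u_t - \Delta u + \nabla p = -\,\mathbb{P}\big(\bar U\cdot\nabla u + u\cdot\nabla \bar U + u\cdot\nabla u\big), \qquad \nabla\cdot u = 0, \qquad u(0) = w_0,
\end{equation}
where $\mathbb{P}$ is the Leray projection and the bilinear self-interaction of $\bar U$ drops out exactly. The plan is then to run the Koch–Tataru fixed-point scheme for $u$ in the Koch–Tataru space $X$ (the space of functions with $\sup_t t^{1/2}\|u\|_\infty$ finite together with the Carleson-measure condition $\sup_{y_0,r} r^{-n}\int_{Q(y_0,r)}|u|^2 < \infty$), but now the relevant contraction map is $u \mapsto e^{t\Delta}w_0 + B(u,u) + L(u)$ where $B$ is the usual Koch–Tataru bilinear operator and $L(u) = -\int_0^t e^{(t-s)\Delta}\mathbb{P}\,\mathrm{div}(\bar U\otimes u + u\otimes\bar U)\,ds$ is a new linear term with coefficients $\bar U$. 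The crucial gain is the exponential decay $e^{-\lambda^2 s}$ (and the smoothing of $v_0$, which is a trigonometric polynomial–type object of frequency $\sim\lambda$): one shows $\|L\|_{X\to X} \le C\cdot\big(\sup_t e^{-\lambda^2 t}\,\|\bar U(t)\|_{\text{(suitable)}}\big) \le C \langle\lambda\rangle^{-b'}$ for some $b'>0$, using that $\int_0^\infty e^{-\lambda^2 s}\langle\lambda\rangle^{k}\,ds \lesssim \langle\lambda\rangle^{k-2}$ dominates the worst power of $\lambda$ appearing from $\bar U$ and its derivative. For $\lambda$ large this makes $L$ a small perturbation; for $\lambda$ bounded, $\langle\lambda\rangle^{-b}$ in \eqref{1.6} forces $w_0$ genuinely small and $\bar U$ bounded, so the classical Koch–Tataru argument with a small linear perturbation applies. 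Hence $I - L$ is invertible on $X$ and the map $u \mapsto (I-L)^{-1}(e^{t\Delta}w_0 + B(u,u))$ is a contraction on a small ball, giving a unique global $u$, hence a unique global $U = \bar U + u$. Uniqueness in the full class follows as in Koch–Tataru.

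For the space-time analyticity claimed in the abstract, once the solution is constructed in $X$ one propagates Gevrey/analytic bounds by the standard device of estimating $e^{\sqrt{t}\,\sqrt{-\Delta}}u$ (or the holomorphic extension $u(x+iy,t)$) in the same norm; the added linear term $L$ is harmless since $\bar U$ is (spatially) analytic with explicit bounds and the operator $e^{\sqrt t\sqrt{-\Delta}}$ commutes with the heat semigroup up to controllable errors — this is exactly the scheme of \cite{germain,du1,miura-sawada} adapted to the extra term.

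The main obstacle I expect is making the decomposition $u_0 = \bar U(0) + w_0$ simultaneously (i) respect the periodicity and zero-mean condition \eqref{1.4}, (ii) yield $w_0$ controlled by \eqref{1.6} in $BMO^{-1}$ — which as noted above is false without exploiting structure, so one really must keep the full $\bar U$ rather than just $e^{t\Delta}\bar U(0)$ — and (iii) produce a $\bar U(t)$ that is an \emph{exact} solution of the linear heat equation (or of Navier–Stokes itself) so that no inhomogeneous forcing term is generated. Getting the bookkeeping of powers of $\langle\lambda\rangle$ right in $\|L\|_{X\to X}$, so that the time integral against $e^{-\lambda^2 s}$ genuinely beats every factor of $\lambda$ coming from differentiating the frequency-$\lambda$ object $\bar U$, is the quantitative heart of the argument and the place where the exponent $b<1$ (rather than $b\le 1$) and the $\lambda$-independence of $\varepsilon$ must be tracked carefully.
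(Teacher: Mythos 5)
Your overall philosophy (perturb off the near-Beltrami structure and run a Koch--Tataru-type fixed point for the remainder) is the right one, but two of the steps you lean on would fail, and they are exactly the places where the paper has to do real work. First, the decomposition. You correctly observe that projecting onto the exact $\lambda$-eigenspace of $\mathrm{curl}$ leaves a remainder $w_0$ that is \emph{not} small in $BMO^{-1}$ (for generic $\lambda$ the eigenspace is empty and $w_0=u_0$), but you never resolve this: the frequencies $\lambda_j$ close to but different from $\lambda$ are neither small (the resolvent $(\mathrm{curl}-\lambda)^{-1}$ blows up there) nor part of an exact Beltrami flow. The paper's fix is to put the \emph{entire} spectral band $\lambda/2<\lambda_j<2\lambda$ into the ``large'' profile $u=e^{t\Delta}u_{2+}$. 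This $u$ is not Beltrami, so its self-interaction does not vanish; instead one uses $(u\cdot\nabla)u=\nabla(|u|^2/2)-u\times(\nabla\times u)$ together with $u\times(\nabla\times u)=u\times(\nabla\times u-\lambda u)$, which is quadratic in the large $u$ but linear in the small quantity \eqref{1.6}, and then rewrites this as a gradient (absorbed into the pressure) plus the divergence of a tensor whose Fourier coefficients carry the symbol $1/(|j|+|k|)$. Estimating that tensor is the content of Lemma 2.5 / Corollary 2.6, a bilinear paraproduct bound that your scheme has no counterpart for. Without it, the self-interaction of the large part is simply $O(M_0^2)$ and the fixed point does not close.

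Second, your claim $\|L\|_{X\to X}\lesssim\langle\lambda\rangle^{-b'}$ is false. The Koch--Tataru norm is scaling invariant and is saturated at times $t\lesssim\lambda^{-2}$ and radii $r\lesssim\lambda^{-1}$, where $e^{-\lambda^2 t}\approx 1$; there one only gets $\|L(u)\|_X\lesssim\|\bar U\|_X\|u\|_X$ with $\|\bar U\|_X\sim\|v_0\|_{BMO^{-1}}\sim M_0$, which is large. So $I-L$ cannot be inverted by a Neumann series, and the global-in-time contraction you propose does not exist. The paper circumvents this by breaking the scaling in three stages: a fixed point on $[0,T_1]$ with $T_1\sim\varepsilon^2\langle\lambda\rangle^{-2b-4}$ chosen so small that $\|e^{t\Delta}u_{2+}\|_{X_{T_1}}\lesssim\varepsilon\langle\lambda\rangle^{-b}+\lambda T_1^{1/4}M_0$ is itself small (this is where $b<1$ and the $\langle\lambda\rangle^{-b}$ in \eqref{1.6} are consumed); then continuation on the unit interval $[T_1,T_1+1]$ using $\int_0^\infty\|u\|_{L^\infty}^2\,dt\lesssim M_0^2$ and a Gronwall argument for the cross terms; and finally a global $H^1$ energy estimate on $[T_1+1,\infty)$. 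Your proposal has no mechanism replacing any of these three stages, so as written it establishes neither local existence with the stated $\lambda$-uniformity nor global existence.
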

Furthermore, we also have the following results:
\begin{theorem}\label{main theorem2}
Under the  conditions of Theorem \ref{main theorem1}, then the solution presented
by theorem 1.3 is analytic about the space and time.
%\begin{equation}
%t^{\frac{k}{2}+m}\partial_t^m\nabla^k U   \in \mathbb{X}_{+\infty}.
%\end{equation}
\end{theorem}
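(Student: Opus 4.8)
The plan is to obtain the analyticity by re-running the contraction scheme that proves Theorem~\ref{main theorem1}, carrying a Gevrey weight in the space variable and, separately, a complexified time, so that the solution again appears as a fixed point of a perturbed Navier--Stokes integral equation in a Gevrey-modified Koch--Tataru space over a complex-time sector. The reduction in Theorem~\ref{main theorem1} writes the solution as $U=\bar U+w$, where $\bar U$ is an explicit superposition of heat-damped eigenmodes $e^{-|k|^2t}e^{ik\cdot x}$ --- hence analytic in $(x,t)$ for $t>0$ --- and $w$ is the fixed point in the Koch--Tataru solution space $X_T$ (the space carrying the $\sqrt{t}\,L^\infty$ bound together with the Carleson/local-energy condition) of an integral equation of the schematic form
\be\nonumber
w(t)=e^{t\Delta}w_0-\int_0^t e^{(t-s)\Delta}\,\mathbb{P}\,\nabla\!\cdot\!\big(w\otimes w+\bar U\otimes w+w\otimes\bar U\big)(s)\,ds,
\ee
with $w_0$ and the $\bar U$-dependent linear terms small in the relevant norms by virtue of the hypothesis $\|\nabla\times u_0-\lambda u_0\|_{BMO^{-2}}\le\varepsilon\langle\lambda\rangle^{-b}$, uniformly in $\lambda$. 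Since $\bar U$ is analytic, it suffices to prove that $w$ is analytic in $x$ and $t$ on $\mathbb{T}^3\times(0,T]$.

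For the space variable I would follow Germain \cite{germain} and Miura--Sawada \cite{miura-sawada} (see also \cite{du1}): fix a small $a>0$, set $v(t):=e^{a\sqrt{t}\,\sqrt{-\Delta}}w(t)$, and use the elementary inequality $\sqrt{t}\le\sqrt{t-s}+\sqrt{s}$ --- equivalently $e^{a\sqrt{t}\,|\xi|}\le e^{a\sqrt{t-s}\,|\xi|}e^{a\sqrt{s}\,|\xi|}$ --- to check that $v$ satisfies an integral equation of exactly the same shape, with $e^{(t-s)\Delta}\mathbb{P}\nabla\cdot$ replaced by $e^{a\sqrt{t-s}\,\sqrt{-\Delta}}e^{(t-s)\Delta}\mathbb{P}\nabla\cdot$ and each product $f\otimes g$ by $e^{a\sqrt{s}\,\sqrt{-\Delta}}\big((e^{-a\sqrt{s}\,\sqrt{-\Delta}}f)\otimes(e^{-a\sqrt{s}\,\sqrt{-\Delta}}g)\big)$; the leftover multiplier $e^{a(\sqrt{t}-\sqrt{t-s}-\sqrt{s})\sqrt{-\Delta}}$ has non-positive symbol, hence is a contraction on $L^\infty(\mathbb{T}^3)$ as convolution with a (periodic) Poisson kernel of mass one. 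The ingredients to verify are then: (i) for $a$ small, $e^{a\sqrt{\tau}\sqrt{-\Delta}}e^{\tau\Delta}$ and $\sqrt{\tau}\,e^{a\sqrt{\tau}\sqrt{-\Delta}}e^{\tau\Delta}\mathbb{P}\nabla\cdot$ obey the same $L^\infty$ and Carleson-measure bounds as the unweighted operators, with constants independent of $\tau$; (ii) the Gevrey-weighted bilinear estimate
\be\nonumber
\big\|e^{a\sqrt{s}\sqrt{-\Delta}}\big((e^{-a\sqrt{s}\sqrt{-\Delta}}f)\otimes(e^{-a\sqrt{s}\sqrt{-\Delta}}g)\big)\big\|_{X_s}\lesssim\|e^{a\sqrt{s}\sqrt{-\Delta}}f\|_{X_s}\|e^{a\sqrt{s}\sqrt{-\Delta}}g\|_{X_s},
\ee
proved as the Koch--Tataru bilinear estimate after inserting $e^{a\sqrt{s}|k|}\le e^{a\sqrt{s}|k-j|}e^{a\sqrt{s}|j|}$ at the frequency-interaction level; and (iii) the Gevrey weight does not spoil the smallness of the $\bar U$-dependent terms, since the frequencies present in $\bar U$ are comparable to $\langle\lambda\rangle$. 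Granting these, the modified map is again a contraction on a small ball of $X_T$, so $v\in X_T$; in particular $\sup_{0<t\le T}\sqrt{t}\,\|e^{a\sqrt{t}\sqrt{-\Delta}}w(t)\|_{L^\infty}<\infty$, and convolving against the derivatives of the Poisson kernel gives $\|\partial_x^\alpha w(\cdot,t)\|_{L^\infty}\le C^{|\alpha|+1}|\alpha|!\,(a\sqrt{t})^{-|\alpha|}t^{-1/2}$, i.e.\ analyticity in $x$ with radius $\gtrsim a\sqrt{t}$.

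For the time variable I would complexify $t$: one checks that $e^{z\Delta}$ is holomorphic in $z$ with $\|e^{z\Delta}\|_{L^\infty\to L^\infty}\le C(\theta)$ and $|z|^{1/2}\|\nabla e^{z\Delta}\|_{L^\infty\to L^\infty}\le C(\theta)$ on each sector $\Sigma_\theta=\{z\ne0:|\arg z|<\theta\}$, $\theta<\pi/2$, so that the whole Koch--Tataru scheme (and its Gevrey variant above) carries over with the time parameters ranging in $\Sigma_\theta$ and all constants uniform there; solving the integral equation for $w$ by contraction then produces a holomorphic map $z\mapsto w(\cdot,z)$ on $\Sigma_\theta$ with values in the (Gevrey-modified) Koch--Tataru space, which together with the space analyticity shows that $(x,t)\mapsto U(x,t)$ is analytic in both variables on $\mathbb{T}^3\times(0,\infty)$. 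Alternatively one may bound $\partial_t^k w$ by induction from $\partial_t w=\Delta w-\mathbb{P}\nabla\cdot(w\otimes w+\bar U\otimes w+w\otimes\bar U)$, using the sharpened heat bound $\|\Delta^k e^{\tau\Delta}f\|_{L^\infty}\le C^k k!\,\tau^{-k}\|f\|_{L^\infty}$ --- which follows from $\sqrt{(2k)!}\le(2e)^k k!$ applied to the $L^1$ norm of $\Delta^k$ of the Gaussian --- to keep a single factorial in the time derivatives.

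The step I expect to be the main obstacle is not the algebra of the exponential weight but the uniformity bookkeeping. One must check that the \emph{full} $X_T$-norm --- in particular its Carleson/local-energy component, not merely the $\sqrt{t}\,L^\infty$ component --- is preserved up to a fixed multiplicative constant under composition with $e^{a\sqrt{\tau}\sqrt{-\Delta}}$ and with the complex-time semigroup, and that every constant arising, together with the smallness of the $\bar U$-dependent linear terms, remains independent of $\lambda$, so that the perturbative threshold is still $\varepsilon\langle\lambda\rangle^{-b}$ exactly as in Theorem~\ref{main theorem1}. Once that uniformity is secured, the contraction argument is the one already carried out there, and the conclusion follows.
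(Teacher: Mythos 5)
Your core technique is genuinely different from the paper's. The paper proves analyticity by the ``regularizing rate'' route of \cite{miura-sawada,du1}: it sets up the weighted spaces $\mathbb{X}^{M,K}_{T}$ carrying $t^{\frac{k+1}{2}+m}\partial_t^m\nabla^k$ of the unknown (Corollaries 3.8--3.12 supply the linear and Duhamel estimates), re-runs the fixed point in those spaces, and in Step 2 propagates bounds of the form $(t-T_1)^{\frac a2+m+\frac k2}\|\partial_t^m\nabla^k v\|_{L^\infty}\le C_{m,k}\varepsilon^a$. You instead use Gevrey weights $e^{a\sqrt{t}\sqrt{-\Delta}}$ plus a complex time sector. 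Both are legitimate; your route packages the factorial growth into the exponential weight automatically, whereas the paper's route requires tracking that the constants $C_{m,k}$ grow at most like $C^{m+k}m!\,k!$ (a point the paper itself leaves implicit). Your identification of the decomposition $U=\bar U+w$ with $\bar U=e^{t\Delta}u_{2+}$ a trigonometric polynomial, hence entire, matches the paper's $u+v$ splitting, and your points (i)--(iii) are the right checklist for the weighted contraction.

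The genuine gap is in the global-in-time reduction. You present the solution as a fixed point of a single perturbed integral equation on the whole time interval, with the $\bar U$-dependent terms ``small in the relevant norms by virtue of the hypothesis, uniformly in $\lambda$.'' That is false for large times and is precisely the difficulty the paper's three-step structure is built to circumvent: $\|e^{t\Delta}u_{2+}\|_{X_T}$ is of size $M_0$ (not small) once $T\gtrsim\lambda^{-4}$, so the contraction in the Koch--Tataru space only closes on $(0,T_1]$ with $T_1\sim\varepsilon^2\langle\lambda\rangle^{-2b-4}$ (see \eqref{4.24}--\eqref{4.25}). On $[T_1,T_2]$ the paper abandons the contraction and uses the near-Beltrami cancellation $u\times(\nabla\times u)=u\times(\nabla\times u-\lambda u)$ together with the square-integrability in time of $\|u\|_{L^\infty}$ and Gronwall; on $[T_2,\infty)$ it only has an $H^1$ energy bound. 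Your Gevrey contraction therefore yields analyticity only on $(0,T_1]$ as written. The argument can be repaired --- e.g.\ by taking the global solution of Theorem \ref{main theorem1} as given and restarting a local-in-time Gevrey argument from each $t_0>0$, using that $U(t_0)\in L^\infty$ with a bound depending on $M_0$ and $t_0$, so that a subcritical short-time contraction (no smallness of data needed) furnishes a uniform analyticity radius near $t_0$ --- but this localization step is missing from your proposal and must be stated, since without it the ``uniformity bookkeeping'' you flag as the main obstacle is not merely delicate but impossible.
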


\begin{remark}We dot not require $M_0$ to be small in our theorem, actually, it can be as large as you want.
 Up to our knowledge,  it is the first large data results for the incompressible Navier-Stokes equations in $BMO^{-1}$ space.
\end{remark}

\begin{remark}
Our result generalizes the Koch and Tataru result in the space periodic case by taking $\lambda=0$.
\end{remark}

\begin{remark}Our result implies the global nonlinear stability of Beltrami flow for the Navier-Stakes equations in the critical space $BMO^{-1}$. Let $u_0=u_{01}+u_{02}$, with $\int_{\mathbb{T}^3} u_{01}(y)dy=\int_{\mathbb{T}^3} u_{02}(y)dy=0$ and $\nabla\cdot u_{01}=\nabla\cdot u_{02}=0$. We call $u_{01}$ a Beltrami flow initial data if there exists a real number $\lambda$ such that $\nabla\times u_{01}=\lambda u_{01}$, then our Theorem implies that as long as $u_{02}$ is small in $BMO^{-1}$, we can have global existence.

 This kind of data can be constructed as follows (without loss of generality, we consider the case $\lambda >0$),
let
\begin{equation}\label{1.8}
u_{01}=\nabla\times u_{03}+(-\Delta)^{1/2}u_{03}
\end{equation}
with $\nabla\cdot u_{03}=0$.
Then, we get
\begin{equation}\label{1.9}
\nabla\times u_{01}=(-\Delta)^{1/2}u_{01}.
\end{equation}

If we take
\begin{equation}\label{1.10}
u_{03}(x)=\sum_{n\in \mathbb{Z}^3, |n|=\lambda}a_ne^{\sqrt{-1}n\cdot x}
\end{equation}
provided that $\lambda$ is chosen such that the set
${n_1,n_2,n_3\in \mathbb{Z}, n_1^2+n_2^2+n_3^2=\lambda^2}$ is not empty.
Then, we get
$(-\Delta)^{1/2}u_{01}=\lambda u_{01}$.

\end{remark}

Moreover, we remark our result  not only holds for small perturbation of a Beltrami flow. Actually, there exists large class of data that is not a small perturbation of Beltrami flow but still we can prove global existence, as the next Corollary demonstrates:

\begin{corr}\label{main corr 1}
Let $M_0$ be an arbitrary positive constant,  suppose that the initial data
$u_0$ of system \eqref{1.1} with
\begin{equation}\label{1.11}
u_0=u_{01}+u_{02}
\end{equation}
and $u_{01}, u_{02}$ are both periodic functions with
\begin{equation}\label{1.12}
\int_{\mathbb{T}^3} u_{01}(y)dy=\int_{\mathbb{T}^3} u_{02}(y)dy=0, \quad
\nabla\cdot u_{01}=\nabla\cdot u_{02}=0,
\end{equation}
as well as
\begin{equation}\label{1.13}
\|u_{01}\|_{BMO^{-1}(\mathbb{R}^3)}\leq M_0,
\quad \|u_{02}\|_{BMO^{-1}(\mathbb{R}^3)}\leq \varepsilon_1.
\end{equation}

Suppose that
\begin{equation}\label{14}
u_{01}(x)=\sum_{i=1}^N\phi_i(x)
\end{equation}
where $\phi_i$ are Beltrami type data satisfying
\begin{equation}\label{1.15}
\nabla\times \phi_i=\lambda_i\phi_i, \quad \nabla\cdot\phi_i=0
\end{equation}
with
\begin{equation}\label{1.16}
1\le \lambda_1<\lambda_2<\cdots <\lambda_N
\end{equation}
and there exists $0<b<1$, such that
\begin{equation}\label{1.17}
\lambda_N-\lambda_1\le \varepsilon\lambda_1^{1-b}
\end{equation}
then there exists a small constant $\varepsilon(b, M_0)$ which
depend on $b$ and $M_0$ , such that the system \eqref{1.1} admits a unique
 global periodic solution  provided that $\varepsilon\le \varepsilon(b, M_0)$ and $\varepsilon_1\le \varepsilon(b,M_0)\lambda_1^{-b}(1+\lambda_1 )^{-1}$.
\end{corr}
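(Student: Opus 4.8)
\emph{Sketch of the argument.}
The plan is to deduce the corollary from Theorem~\ref{main theorem1} applied with the single parameter $\lambda:=\lambda_1$. Since $\lambda_1\ge 1$ we have $\lambda_1\le\langle\lambda_1\rangle\le\sqrt2\,\lambda_1$, so powers of $\langle\lambda_1\rangle$ and of $\lambda_1$ are interchangeable up to absolute constants. Denote by $\varepsilon^\star=\varepsilon^\star(b,M_0+1)$ the threshold that Theorem~\ref{main theorem1} furnishes for periodic, mean-zero data of $BMO^{-1}$-size at most $M_0+1$; it is independent of $\lambda$. Three things must be verified: (i) $\int_{\mathbb{T}^3}u_0\,dy=0$, immediate from \eqref{1.12}; (ii) $\|u_0\|_{BMO^{-1}(\mathbb{R}^3)}\le M_0+1$, which follows from \eqref{1.13} once $\varepsilon_1\le 1$; and (iii) the smallness condition $\|\nabla\times u_0-\lambda_1 u_0\|_{BMO^{-2}(\mathbb{R}^3)}\le\varepsilon^\star\langle\lambda_1\rangle^{-b}$. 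Everything then reduces to (iii) together with the bookkeeping of constants.

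For (iii) I would use $\nabla\times\phi_i=\lambda_i\phi_i$ (from \eqref{1.15}) and split
\[
\nabla\times u_0-\lambda_1 u_0=\underbrace{\sum_{i=1}^{N}(\lambda_i-\lambda_1)\phi_i}_{=:\,I}\;+\;\underbrace{\big(\nabla\times u_{02}-\lambda_1 u_{02}\big)}_{=:\,II}.
\]
The remainder term $II$ is harmless: $u_{02}$ is periodic with zero mean, hence its frequencies stay away from $0$, so $|\nabla|^{-1}$, the map $f\mapsto|\nabla|^{-1}(\nabla\times f)$ and the Riesz transforms are bounded Fourier multipliers on $BMO^{-1}$. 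Using the comparison $\|f\|_{BMO^{-2}}\approx\| \, |\nabla|^{-1}f \, \|_{BMO^{-1}}$ (which itself follows from the definition of $BMO^{-2}$ together with the $BMO^{-1}$-boundedness of the Riesz transforms), one gets
\[
\|II\|_{BMO^{-2}}\le\|\nabla\times u_{02}\|_{BMO^{-2}}+\lambda_1\|u_{02}\|_{BMO^{-2}}\lesssim(1+\lambda_1)\|u_{02}\|_{BMO^{-1}}\le(1+\lambda_1)\varepsilon_1,
\]
which by the hypothesis $\varepsilon_1\le\varepsilon(b,M_0)\lambda_1^{-b}(1+\lambda_1)^{-1}$ is $\lesssim\varepsilon(b,M_0)\,\lambda_1^{-b}$.

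The heart of the matter is $I$. Every $\phi_i$, and hence $u_{01}$, is supported in Fourier space on the sphere $\{n\in\mathbb{Z}^3:|n|=\lambda_i\}$, so on the (finite) frequency support of $u_{01}$ one has $|\nabla|u_{01}=\nabla\times u_{01}$; thus $I=(|\nabla|-\lambda_1)u_{01}$ and
\[
\|I\|_{BMO^{-2}}\approx\big\| \, |\nabla|^{-1}I \, \big\|_{BMO^{-1}}=\big\|(1-\lambda_1|\nabla|^{-1})u_{01}\big\|_{BMO^{-1}}.
\]
Moreover, as $1<\lambda_N/\lambda_1<1+\varepsilon$ for $\varepsilon$ small, $u_{01}$ is frequency-localised to a single dyadic annulus $|n|\sim\lambda_1$. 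Next I would use the telescoping identity $1-\lambda_1|\xi|^{-1}=\int_{\lambda_1}^{|\xi|}\lambda_1 t^{-2}\,dt$, which yields
\[
(1-\lambda_1|\nabla|^{-1})u_{01}=\int_{\lambda_1}^{\lambda_N}\lambda_1 t^{-2}\,\big(P_{>t}\,u_{01}\big)\,dt,
\]
where $P_{>t}$ is the projection onto those $\phi_i$ with $\lambda_i>t$; as a function of $t$ the integrand is a step function, taking only the finitely many values $v_k:=\phi_k+\cdots+\phi_N$, $1\le k\le N$ (so $v_1=u_{01}$ by \eqref{14}). Applying the triangle inequality and the key estimate $\max_{1\le k\le N}\|v_k\|_{BMO^{-1}}\lesssim\|u_{01}\|_{BMO^{-1}}\le M_0$, one obtains
\[
\|I\|_{BMO^{-2}}\lesssim M_0\int_{\lambda_1}^{\lambda_N}\lambda_1 t^{-2}\,dt=M_0\Big(1-\frac{\lambda_1}{\lambda_N}\Big)\le M_0\,\frac{\lambda_N-\lambda_1}{\lambda_1}\le M_0\,\varepsilon\,\lambda_1^{-b}
\]
by \eqref{1.17}. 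Combining the bounds for $I$ and $II$ and choosing $\varepsilon(b,M_0)$ small enough — in terms of $\varepsilon^\star(b,M_0+1)$, $M_0$ and the absolute constants above — makes the left-hand side of (iii) at most $\varepsilon^\star\langle\lambda_1\rangle^{-b}$; Theorem~\ref{main theorem1} with $\lambda=\lambda_1$ then provides the unique global periodic solution, and Theorem~\ref{main theorem2} its analyticity.

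The main obstacle is the key estimate $\max_k\|v_k\|_{BMO^{-1}}\lesssim\|u_{01}\|_{BMO^{-1}}$, i.e. the uniform $BMO^{-1}$-boundedness of the spectral truncations of $u_{01}$ onto unions of the spheres $\{|n|=\lambda_i\}$. It does not follow from a generic Mikhlin--H\"{o}rmander multiplier theorem on the annulus $|n|\sim\lambda_1$: the symbol $1-\lambda_1|\xi|^{-1}$ has size $\approx(\lambda_N-\lambda_1)/\lambda_1$ on the support of $\widehat{u_{01}}$, but its derivatives are not small relative to the dyadic scale $\lambda_1$, so any argument that exploits the smoothness of the symbol over the whole band $|n|\sim\lambda_1$ loses the gain — equivalently, bounding $I$ by the triangle inequality over $i$ would cost a factor $\sim\sqrt N$, which is too much when $b<1$. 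What rescues the argument is an arithmetic fact that reflects the periodicity: the numbers $\lambda_i^2$ are positive integers, hence consecutive $\lambda_i$ are separated by at least $c\lambda_1^{-1}$, and each $v_k$ is obtained from $u_{01}$ by a Fourier multiplier with a single smooth transition at scale $\gtrsim\lambda_1^{-1}$ inside the band $|n|\sim\lambda_1$; proving the uniform $BMO^{-1}$ bound for such multipliers — by a Littlewood--Paley decomposition at the gap scale together with a Calder\'{o}n--Zygmund kernel estimate — is where the real work lies, the remaining steps being routine.
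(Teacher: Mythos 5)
Your proposal follows essentially the same route as the paper's own (explicitly labelled) sketch: apply Theorem~\ref{main theorem1} with $\lambda=\lambda_1$, split $\nabla\times u_0-\lambda_1 u_0$ into the $u_{01}$-part and the $u_{02}$-part, and bound each in $BMO^{-2}$. Your treatment of $II$ coincides with the paper's (Lemma~2.3 plus the definition of $BMO^{-2}$, giving $(1+\lambda_1)\varepsilon_1\lesssim\varepsilon\lambda_1^{-b}$), and your bookkeeping with $\varepsilon^\star(b,M_0+1)$ is the routine part the paper leaves implicit. The one genuine difference is in $I$: the paper passes from $BMO^{-2}$ to $\lambda_1^{-1}\cdot BMO^{-1}$ by Bernstein (legitimate, since $u_{01}$ is localized to the dyadic annulus $|n|\sim\lambda_1$) and then simply \emph{asserts} $\bigl\|\sum_{i}\tfrac{\lambda_i-\lambda_1}{\lambda_1}\phi_i\bigr\|_{BMO^{-1}}\lesssim M_0\varepsilon\lambda_1^{-b}$ with no further argument, whereas you reformulate exactly this assertion, via Abel summation, as the uniform $BMO^{-1}$-boundedness of the partial sums $v_k=\sum_{i\ge k}\phi_i$ and correctly flag it as the crux: since $\max_i|c_i|$ small does not control $\|\sum_i c_i\phi_i\|$ in a non-Hilbertian norm, and sharp spectral truncations are not generically bounded on $BMO^{-1}$, this step is genuinely nontrivial. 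So you have not overlooked an argument that the paper supplies — the paper supplies none — and your diagnosis of where the real work lies is accurate; but be aware that your write-up, like the paper's, is incomplete at precisely that point, and a referee would want either the Calder\'on--Zygmund/gap-scale argument you allude to or some other proof of the partial-sum bound before accepting the corollary as proved.
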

%\begin{equation}\label{1.107}
%\| u_0\|_{L^{2}(\mathbb{T}^3)}\leq \varepsilon_0.
%\end{equation}
%and there exists an absolute constant $\delta$ independent of $\lambda ,M_0$
%\begin{equation}\label{1.108}
%\| \nabla\times u_0-\lambda u_0\|_{BMO^{-2}(\mathbb{R}^3)}\leq \delta.
%\end{equation}
%\end{theorem}
%\begin{remark}By Theorem 1.7, we can easily construct initial data which is not a small perturbation of the Beltrami flow, actually, by the construction in the last remark, we can take
%\begin{equation}
%u_{03}(x)=\sum_{n\in Z^3, ||n|-\lambda|\le \delta^2M_0^{-1}\lambda }a_ne^{\sqrt{-1}n\cdot x}
%\end{equation}
%provided that in addition the initial energy is sufficiently small.
%\end{remark}
\begin{remark}
 Take one more curl to $\phi_i$, we get
\begin{equation}\label{1.18}
-\Delta\phi_i=\nabla\times(\nabla\times\phi_i )=\lambda_i\nabla\times\phi_i=\lambda_i^2\phi_i.
\end{equation}
Thus, $\phi_i$ are the eigenfunction of the Laplacian and $\lambda_i^2$ are the eigenvalues of the Laplacian. Thus, \eqref{14} are noting but a Fourier series expansion.
\end{remark}

Let us sketch a proof of This Corollary. We have
\begin{eqnarray}
&&\| \nabla\times u_0-\lambda_1 u_0\|_{BMO^{-2}(\mathbb{R}^3)}\\\nonumber
&&\leq\| \nabla\times u_{01}-\lambda_1 u_{01}\|_{BMO^{-2}(\mathbb{R}^3)}+\| \nabla\times u_{02}-\lambda_1 u_{02}\|_{BMO^{-2}(\mathbb{R}^3)}.
\end{eqnarray}

Because the spectrum of $u_{01}$ is concentrated near $\lambda_1$, by Bernstein's inequality, we have
\begin{eqnarray}
\| \nabla\times u_{01}-\lambda_1 u_{01}\|_{BMO^{-2}(\mathbb{R}^3)}\lesssim \lambda_1^{-1}\| \nabla\times u_{01}-\lambda_1 u_{01}\|_{BMO^{-1}(\mathbb{R}^3)}\\\nonumber
=\|\sum_{i=1}^N\frac{\lambda_i-\lambda_1}{\lambda_1}\phi_i\|_{BMO^{-1}(\mathbb{R}^3)}\lesssim M_0\varepsilon\lambda_1^{-b}.
\end{eqnarray}
On the other hand, by Lemma 2.3, we have
\begin{eqnarray}
&&\| \nabla\times u_{02}-\lambda_1 u_{02}\|_{BMO^{-2}(\mathbb{R}^3)} \le \| \nabla\times u_{02}\|_{BMO^{-2}(\mathbb{R}^3)}+\lambda_1 \|u_{02}\|_{BMO^{-2}(\mathbb{R}^3)}\\\nonumber
  &&\lesssim \|u_{02}\|_{BMO^{-1}(\mathbb{R}^3)}+\lambda_1\|u_{02}\|_{BMO^{-1}(\mathbb{R}^3)}\lesssim \varepsilon\lambda_1^{-b}.
\end{eqnarray}
Therefore, Corollary 1.8 follows.

%As a matter of fact, it is possible to prove a more general result then this Theorem.
%One can take $u_0=u_{01}+u_{02}$ and such that $u_{02} $ is small in $BMO^{-1}$
%and $u_{01}=\sum_{j=1}^\infty u_{01j}$
%with
%$u_{01j}=\sum_{i=1}^{N_j}\phi_{i}^j$
%where $\phi_{i}^j$ are Beltrami type data satisfying
%\begin{equation}\label{1.24}
%\nabla\times \phi_i^j=\lambda_i^j\phi_i^j, \quad \nabla\cdot\phi_i^j=0
%\end{equation}
%such that
%\begin{equation}\label{1.25}
%1\le |\lambda_1^j|<|\lambda_2^j|<\cdots <|\lambda_{N_j}^j|
%\end{equation}
%and there exists $0<a<1$ such that
%\begin{equation}\label{1.26}
%\sum_{i=1}^{N_j}|\lambda_{i}^j-\lambda_1^j|\le \epsilon(\lambda_1^j)^a\quad \forall j
%\end{equation}
%and
%\begin{equation}\label{1.27}
%\frac{|\lambda_1^j|}{|\lambda_1^{j+1}|}\le \epsilon\quad \forall j
%\end{equation}
%Suppose that
%\begin{equation}\label{1.28}
%\sum_{j=1}^{\infty}\|u_{01j}\|_{BMO^{-1}(\mathbb{R}^3)}\le M_0<\infty
%\end{equation}
%Then there exists an $\epsilon_0 (a, M_0)$ depending only on $a$ and $M_0$ such that system \eqref{1.1} has a global
%space periodic solution provided that $\epsilon\le\epsilon_0(a, M_0)$.
%However, for the convenience of exposition, we will not attempt to prove such a general Theorem.
% Second, we established some new estimates in the scaling invariant space, which plays the key role in our proof (See Proposition 3.4 and Corollary 3.6).

Our paper is organized as follows: in the next section, we present  some preliminaries. In section 3, we shall give some prepare work for the linear   heat equation. The main theorem will be proved in section 4(both Theorem 1.3 and Theorem 1.4).
Throughout this paper, we sometimes use the notation $A\lesssim B$ as an equivalent to $A \leq CB$ with a uniform constant $C$.

%%%%%%%%%%%%%%%%%%%%%%%%%%%%%%%%%%%%%%%%%%%%%%%%%%%%%%%%%%%%%%%%%%%%%%%%%%%%%%%%%%%%%%%%%%
%%%%%%%%%%%%%%%%%%%%%%%%%%%%%%%%%%%%%%%%%%%%%%%%%%%%%%%%%%%%%%%%%%%%%%%%%%%%%%%%%%%%%%%%%%%%%%%%%%%%%%%%%%%%%%%%%%%%%%%%%%%%%%%%%%%%%%%%%

\section{Preliminaries}
At the beginning, we recall  some properties for the  Leary projection operator $\mathbb{P}$ to
divergence free vector fields, which is defined by its matrix valued Fourier
multiplier $\hat{\mathbb{P}}(\xi)=\delta_{ij}-\frac{\xi_i\xi_j}{|\xi|^2}$. For any multi-indices $\alpha$, this symbol satisfies Mihlin-Hormander  condition $\sup\limits_{|\xi|\not=0}|\xi|^\alpha|\partial_{\xi}^{\alpha}\hat{\mathbb{P}}(\xi)|\leq C.$
Besides,  we have the following pointwise bound (see \cite{Lemarie-Rieusset}  Proposition 11.1).
\begin{lemma} Denote $e^{t\Delta}$ as the heat operator, $n$ is the space dimensions, and $ \tilde{\mathbb{P}}(x,t )$  is the kernel of  $\nabla^{k+1}\mathbb{P}e^{t\Delta}$, then there holds
\begin{equation}\label{2.1}
\tilde{\mathbb{P}}(x,t )   \leq C(k)\frac{1}{(\sqrt{t}+|x|)^{n+k+1}},
\end{equation}
where and $C(k)$ is a constant depending on $k$.
\end{lemma}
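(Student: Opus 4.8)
\emph{Proof strategy.} The plan is to reduce everything to the value $t=1$ by scaling and then estimate a single fixed kernel. Writing the Fourier multiplier of $\nabla^{k+1}\mathbb{P}e^{t\Delta}$ as $(i\xi)^{\alpha}\hat{\mathbb{P}}(\xi)e^{-t|\xi|^{2}}$ with $|\alpha|=k+1$ and changing variables $\xi\mapsto\xi/\sqrt{t}$ in the inverse Fourier transform, one obtains the exact self-similar identity $\tilde{\mathbb{P}}(x,t)=t^{-(n+k+1)/2}\,\tilde{\mathbb{P}}\!\left(x/\sqrt{t},1\right)$, so it suffices to prove $|\tilde{\mathbb{P}}(y,1)|\le C(k)(1+|y|)^{-(n+k+1)}$ and insert it into the identity to recover \eqref{2.1}. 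Accordingly I set $m(\xi)=(i\xi)^{\alpha}\hat{\mathbb{P}}(\xi)e^{-|\xi|^{2}}$ and study $\tilde{\mathbb{P}}(\cdot,1)=\mathcal{F}^{-1}m$. Since $\hat{\mathbb{P}}$ is homogeneous of degree $0$ and smooth off the origin, the Mihlin--H\"ormander type bounds give $|\partial_{\xi}^{\beta}\hat{\mathbb{P}}(\xi)|\le C_{\beta}|\xi|^{-|\beta|}$, and the Leibniz rule then yields $|\partial_{\xi}^{\beta}m(\xi)|\le C_{\beta}|\xi|^{\,k+1-|\beta|}$ for $|\xi|\le1$, while $m$ together with all its derivatives is $O_{N}(|\xi|^{-N})$ for $|\xi|\ge1$ thanks to the Gaussian factor. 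In particular $m\in L^{1}$, whence $|\tilde{\mathbb{P}}(y,1)|\le\|m\|_{L^{1}}$, which already settles the range $|y|\le1$.

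For $|y|\ge1$ I would split $m=m_{\mathrm{lo}}+m_{\mathrm{hi}}$ using a smooth cutoff $\chi$ equal to $1$ near the origin and supported in $\{|\xi|\le1\}$, with $m_{\mathrm{lo}}=\chi m$ and $m_{\mathrm{hi}}=(1-\chi)m$. The high-frequency piece $m_{\mathrm{hi}}$ is a Schwartz function, so $\mathcal{F}^{-1}m_{\mathrm{hi}}$ decays faster than any power of $|y|$. The low-frequency piece $m_{\mathrm{lo}}$, supported in $\{|\xi|\le1\}$, I would decompose dyadically, $m_{\mathrm{lo}}=\sum_{j\le0}m_{j}$ with $m_{j}$ supported where $|\xi|\sim2^{j}$. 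The derivative bounds give $\|\partial_{\xi}^{\beta}m_{j}\|_{L^{\infty}}\lesssim2^{j(k+1-|\beta|)}$ on a set of measure $\lesssim2^{jn}$, so $L$-fold integration by parts ($|y|^{L}\mathcal{F}^{-1}m_{j}=\mathcal{F}^{-1}(\partial^{L}m_{j})$, up to constants) yields, for every $L\ge0$, $|\mathcal{F}^{-1}m_{j}(y)|\lesssim 2^{j(n+k+1)}(1+2^{j}|y|)^{-L}$. Summing over $j\le0$ and splitting the sum at $2^{j}\sim|y|^{-1}$: for $2^{j}\le|y|^{-1}$ use $L=0$ to get $\sum 2^{j(n+k+1)}\lesssim|y|^{-(n+k+1)}$; for $2^{j}>|y|^{-1}$ use $L=n+k+2$ to get $\sum 2^{-j}|y|^{-(n+k+2)}\lesssim|y|\cdot|y|^{-(n+k+2)}=|y|^{-(n+k+1)}$. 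Adding the three contributions gives $|\tilde{\mathbb{P}}(y,1)|\lesssim(1+|y|)^{-(n+k+1)}$, and rescaling finishes the proof. The matrix structure of $\mathbb{P}$ plays no role here; one simply runs the argument componentwise.

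The step I expect to be the genuine obstacle is exactly this borderline summation. One \emph{cannot} recover the exponent $n+k+1$ by naive integration by parts, because near the origin $|\partial_{\xi}^{\beta}m|$ is only $O(|\xi|^{\,k+1-|\beta|})$, which fails to be integrable once $|\beta|\ge n+k+1$; the dyadic localization with a scale-dependent choice of $L$ — crude $L^{1}$ bound at frequencies below $|y|^{-1}$, rapidly decaying bound above — is what makes the critical power work out. Everything else (the scaling reduction, the symbol estimates on $\hat{\mathbb{P}}$, the Schwartz tail) is routine. Alternatively, one may simply invoke Proposition~11.1 of \cite{Lemarie-Rieusset}, where this pointwise bound is established in full detail.
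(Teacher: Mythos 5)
Your argument is correct: the self-similar reduction $\tilde{\mathbb{P}}(x,t)=t^{-(n+k+1)/2}\tilde{\mathbb{P}}(x/\sqrt{t},1)$ is exact because $\hat{\mathbb{P}}$ is homogeneous of degree zero, and your dyadic treatment of the low-frequency part correctly overcomes the borderline failure of naive integration by parts (the symbol bound $|\partial_\xi^\beta m(\xi)|\lesssim|\xi|^{k+1-|\beta|}$ stops being integrable near the origin exactly at $|\beta|=n+k+1$, which is the order one would need; the scale-dependent choice of $L$ in the dyadic sum recovers the critical exponent, with both halves of the split summing to $|y|^{-(n+k+1)}$ as you compute). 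The paper itself does not prove this lemma at all: it simply quotes Proposition~11.1 of Lemari\'e-Rieusset, which is the fallback you mention in your last sentence. So your proposal is not so much a different route as the only route on offer here — a self-contained proof where the paper has a citation. The only cosmetic point is that the inequality in \eqref{2.1} should of course be read as a bound on $|\tilde{\mathbb{P}}(x,t)|$ (componentwise for the matrix kernel), which is what you prove.
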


\begin{lemma} Let $\mathbb{K}(x,t)=\frac{1}{\sqrt{t}^n}e^{-\frac{x^2}{4t}}$, then there exists a polynomial $J^{k+2m}(\frac{x}{\sqrt{t}})$  with degree  $k+2m$, such that
\begin{equation}\label{2.2}
\partial_t^m\nabla^k \mathbb{K}(x,t )= \frac{1}{t^{m+\frac{k}{2}}}\mathbb{K}(x,t )J^{k+2m}(\frac{x}{\sqrt{t}}).
\end{equation}
\begin{proof}
It can be proved by induction.
\end{proof}
\end{lemma}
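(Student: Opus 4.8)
\emph{Proof plan.} The plan is to run a two‑stage induction — first on the number of spatial derivatives, then on the number of time derivatives — after recording a few elementary identities in the self‑similar variable $y=x/\sqrt t$. A direct computation gives
\begin{equation}\nonumber
\partial_{x_i}\mathbb{K}=-\tfrac{x_i}{2t}\,\mathbb{K}=t^{-1/2}\mathbb{K}\big({-}\tfrac{y_i}{2}\big),\qquad \partial_t\mathbb{K}=t^{-1}\mathbb{K}\big(\tfrac{|y|^2}{4}-\tfrac n2\big),
\end{equation}
and by the chain rule, for any polynomial $g$,
\begin{equation}\nonumber
\partial_{x_i}\big[g(y)\big]=t^{-1/2}(\partial_{y_i}g)(y),\qquad \partial_t\big[g(y)\big]=-\tfrac1{2t}\,(y\cdot\nabla_y g)(y).
\end{equation}
(The second identity for $\partial_t\mathbb{K}$ is just $\partial_t\mathbb{K}=\Delta\mathbb{K}$, which gives an even shorter route; see the last paragraph.) The base case $m=k=0$ of \eqref{2.2} is immediate with $J^0\equiv 1$.

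First I would fix $m=0$ and induct on $k$, reading $\nabla^k$ as a tensor so that it suffices to treat one component, i.e. an arbitrary $\partial_x^\beta$ with $|\beta|=k$. Assuming $\partial_x^\beta\mathbb{K}=t^{-k/2}\mathbb{K}\,J^{k}(y)$ with $\deg J^{k}=k$, the identities above give
\begin{equation}\nonumber
\partial_{x_i}\partial_x^\beta\mathbb{K}=t^{-k/2}\partial_{x_i}\big[\mathbb{K}\,J^{k}(y)\big]=t^{-(k+1)/2}\mathbb{K}\Big[{-}\tfrac{y_i}{2}J^{k}(y)+(\partial_{y_i}J^{k})(y)\Big].
\end{equation}
The bracket is a polynomial in $y$; since $-\tfrac{y_i}{2}J^{k}$ has degree $k+1$, $\partial_{y_i}J^{k}$ has degree $k-1$, and the top‑degree term of $-\tfrac{y_i}{2}J^{k}$ cannot cancel, its degree is exactly $k+1$. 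This proves \eqref{2.2} for all $k$ when $m=0$.

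Next I would fix $k$ and induct on $m$, starting from the case just obtained. Applying $\partial_t$ to $t^{-(m+k/2)}\mathbb{K}\,J^{k+2m}(y)$ and using the product rule on the three $t$‑dependent factors, then the identities above, yields
\begin{equation}\nonumber
\partial_t^{m+1}\nabla^k\mathbb{K}=t^{-(m+1+k/2)}\mathbb{K}\Big[{-}\big(m+\tfrac k2\big)J^{k+2m}(y)+\big(\tfrac{|y|^2}{4}-\tfrac n2\big)J^{k+2m}(y)-\tfrac12(y\cdot\nabla_y J^{k+2m})(y)\Big].
\end{equation}
The exponent is correct since $m+1+\tfrac k2=\tfrac{2(m+1)+k}{2}$. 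In the bracket the term $\tfrac{|y|^2}{4}J^{k+2m}(y)$ has degree $k+2m+2=k+2(m+1)$ while the other two have degree $\le k+2m$, so the top degree survives and the bracket is a polynomial of degree exactly $k+2(m+1)$. This closes the induction on $m$ and proves the lemma.

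I do not expect a real obstacle here: the argument is a bookkeeping induction, and the only points deserving attention are (i) treating $\nabla^k$ componentwise in the spatial step, and (ii) tracking leading coefficients if one wants the degree to be \emph{exactly} $k+2m$ rather than merely $\le k+2m$. Alternatively, one may bypass the time induction entirely by noting $\partial_t\mathbb{K}=\Delta\mathbb{K}$, so that $\partial_t^m\nabla^k\mathbb{K}=\nabla^k\Delta^m\mathbb{K}$ is a spatial derivative of $\mathbb{K}$ of total order $2m+k$ (and $m+\tfrac k2=\tfrac{2m+k}{2}$); applying the spatial induction of the second paragraph $2m+k$ times then gives \eqref{2.2} at once.
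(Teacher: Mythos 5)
Your proposal is correct and is exactly the induction the paper has in mind (the paper's proof is literally the one line ``It can be proved by induction''); your version simply writes out the self-similar-variable identities and the two induction steps, including the degree bookkeeping. No gaps.
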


\begin{lemma} under the assumption that $u_0$ is a periodic function and $\bbint_{\mathbb{T}^3}u_0(y) dy=0$, We have
\begin{equation}\label{2.3}
\|u_0\|_{BMO^{-2}}\lesssim \|u_0\|_{BMO^{-1}}  .
\end{equation}
\begin{proof}

Since $\bbint_{\mathbb{T}^3}u_0(x+y) dy=0$, we have
 \begin{eqnarray} \label{2.4}
&&u_0(x)=\bbint_{\mathbb{T}^3}(u_0(x)-u_0(x+y))dy \\
\nonumber
&&=- \bbint_{\mathbb{T}^3} \int_0^1 \frac{d}{ds}u_0(x+sy) ds dy  =- \nabla_x \bigg(\int^1_0  \bbint_{\mathbb{T}^3} u_0(x+sz) \cdot zdzds\bigg).
\end{eqnarray}
Thus, the conclusion follows
\end{proof}
\end{lemma}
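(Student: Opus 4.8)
The natural plan is to represent $u_0$ explicitly as a spatial divergence $u_0=\sum_i\partial_i g_i$ with $\|g_i\|_{BMO^{-1}}\lesssim\|u_0\|_{BMO^{-1}}$; the asserted inequality is then immediate from the definition $\|u_0\|_{BMO^{-2}}=\inf\sum_i\|\bar g_i\|_{BMO^{-1}}$, the infimum being taken over all such representations. The mean-zero hypothesis $\bbint_{\mathbb{T}^3}u_0=0$ is exactly what makes such a representation available. Indeed, for fixed $x$ one has $\bbint_{\mathbb{T}^3}\big(u_0(x)-u_0(x+y)\big)\,dy=u_0(x)$, so the fundamental theorem of calculus gives
\begin{equation}\nonumber
u_0(x)=-\bbint_{\mathbb{T}^3}\int_0^1\tfrac{d}{ds}u_0(x+sy)\,ds\,dy
=-\sum_{i=1}^3\partial_{x_i}\!\left(\int_0^1\bbint_{\mathbb{T}^3}y_i\,u_0(x+sy)\,dy\,ds\right),
\end{equation}
where I used $\tfrac{d}{ds}u_0(x+sy)=\sum_i y_i(\partial_i u_0)(x+sy)=\sum_i y_i\,\partial_{x_i}\big[u_0(x+sy)\big]$ and interchanged the $x$-derivative with the parameter integrals. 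Thus $u_0=\sum_{i=1}^3\partial_i g_i$ with
\begin{equation}\nonumber
g_i(x):=-\int_0^1\bbint_{\mathbb{T}^3}y_i\,u_0(x+sy)\,dy\,ds ,
\end{equation}
and each $g_i$ is again periodic with zero mean over $\mathbb{T}^3$.

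The second, and only slightly delicate, step is the estimate $\|g_i\|_{BMO^{-1}}\lesssim\|u_0\|_{BMO^{-1}}$. The idea is that $g_i$ is a continuous superposition of spatial translates $u_0(\cdot+sy)$ against the weight $y_i$, which is bounded on the compact parameter set $[0,1]\times\mathbb{T}^3$. Since the heat semigroup commutes with translations, $e^{t\Delta}\big[u_0(\cdot+sy)\big](x)=(e^{t\Delta}u_0)(x+sy)$, and therefore the Carleson-box form of the $BMO^{-1}$ norm recalled in the Definition --- the supremum over space-time balls $Q(y_0,r)$ of $\big(r^{-n}\!\int_{Q(y_0,r)}|e^{t\Delta}u_0|^2\big)^{1/2}$ --- is invariant under spatial translation: $\|u_0(\cdot+sy)\|_{BMO^{-1}}=\|u_0\|_{BMO^{-1}}$ for all $s\in[0,1]$, $y\in\mathbb{T}^3$. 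Minkowski's integral inequality in the normed space $BMO^{-1}$ then yields
\begin{equation}\nonumber
\|g_i\|_{BMO^{-1}}\le\int_0^1\bbint_{\mathbb{T}^3}|y_i|\,\|u_0(\cdot+sy)\|_{BMO^{-1}}\,dy\,ds
=\Big(\bbint_{\mathbb{T}^3}|y_i|\,dy\Big)\|u_0\|_{BMO^{-1}}\lesssim\|u_0\|_{BMO^{-1}} .
\end{equation}

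Combining the two steps, the representation $u_0=\sum_i\partial_i g_i$ is admissible in the definition of the $BMO^{-2}$ norm, so
\begin{equation}\nonumber
\|u_0\|_{BMO^{-2}}\le\sum_{i=1}^3\|g_i\|_{BMO^{-1}}\lesssim\|u_0\|_{BMO^{-1}},
\end{equation}
which is the claim. I expect the only genuine point requiring attention to be the justification of the second step: that $\|\cdot\|_{BMO^{-1}}$ obeys the triangle inequality so that Minkowski's integral inequality applies, and that the vector-valued integral defining $g_i$ may be exchanged with the Carleson supremum. Both are routine once one views $\|f\|_{BMO^{-1}}$ as a supremum over space-time balls of $L^2$-type norms of the linear map $f\mapsto e^{t\Delta}f$; in particular the displayed bound itself certifies that $g_i$ is a tempered distribution of finite $BMO^{-1}$ norm, so the representation is legitimate. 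Beyond this there is no real obstacle.
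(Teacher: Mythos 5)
Your proposal is correct and follows essentially the same route as the paper: the identical representation $u_0(x)=-\nabla_x\big(\int_0^1\bbint_{\mathbb{T}^3}u_0(x+sz)\cdot z\,dz\,ds\big)$ obtained from the mean-zero condition and the fundamental theorem of calculus. The paper stops there with ``the conclusion follows,'' whereas you also supply the (correct) justification of the norm bound $\|g_i\|_{BMO^{-1}}\lesssim\|u_0\|_{BMO^{-1}}$ via translation invariance of the Carleson-box norm and Minkowski's integral inequality.
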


  For completeness, we also  give the following well-known equality:
\begin{lemma} Let
$\nabla\cdot b=0$ and  $\bigtriangledown\cdot h=0$, then we have
\begin{equation}\label{2.5}
b\cdot\bigtriangledown h+h\cdot \bigtriangledown b=-b\times(\bigtriangledown\times h)-h\times(\bigtriangledown\times b)+\bigtriangledown (b\cdot h).
\end{equation}
\end{lemma}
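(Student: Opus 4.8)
The final statement to prove is Lemma 2.4, the vector calculus identity
\[
b\cdot\nabla h + h\cdot\nabla b = -b\times(\nabla\times h) - h\times(\nabla\times b) + \nabla(b\cdot h)
\]
for divergence-free vector fields $b$ and $h$ in $\mathbb{R}^3$.

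\textbf{Proof proposal.} The plan is to verify the identity componentwise using the standard expansion of the triple cross product, and then observe that the divergence-free hypotheses are actually not needed for this particular identity (it holds for arbitrary smooth vector fields). First I would recall the ``BAC-CAB''-style identity for the curl of a cross product applied to the gradient: for any two vector fields, $a\times(\nabla\times c) = \nabla_c(a\cdot c) - (a\cdot\nabla)c$, where $\nabla_c$ means the gradient acts only on the components of $c$. Writing this out in coordinates, $[a\times(\nabla\times c)]_j = \sum_i a_i\,\partial_j c_i - \sum_i a_i\,\partial_i c_j$. Applying this with $(a,c) = (b,h)$ and with $(a,c) = (h,b)$ and adding the two resulting identities, the terms $\sum_i b_i\,\partial_j h_i + \sum_i h_i\,\partial_j b_i = \partial_j(b\cdot h) = [\nabla(b\cdot h)]_j$ combine into the full gradient, while the remaining terms are exactly $-[b\cdot\nabla h]_j - [h\cdot\nabla b]_j$. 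Rearranging gives the claimed formula.

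Concretely, the key steps in order are: (i) establish the componentwise identity $[a\times(\nabla\times c)]_j = \sum_i a_i\partial_j c_i - (a\cdot\nabla)c_j$ directly from the definition of the cross product and curl, using the Levi-Civita symbol contraction $\sum_k \epsilon_{jk\ell}\epsilon_{k m n} = \delta_{j m}\delta_{\ell n} - \delta_{j n}\delta_{\ell m}$; (ii) specialize to $(b,h)$ and to $(h,b)$; (iii) sum and recognize the product-rule collapse $\sum_i(b_i\partial_j h_i + h_i\partial_j b_i) = \partial_j(b\cdot h)$; (iv) transpose the $b\cdot\nabla h + h\cdot\nabla b$ terms to the left-hand side. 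Since the identity is purely algebraic once (i) is in hand, no estimates or analytic machinery are involved.

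There is essentially no serious obstacle here; the only mild subtlety is bookkeeping with the Levi-Civita index contractions and being careful that in $a\times(\nabla\times c)$ the differential operator in $\nabla\times c$ must not be allowed to act on $a$ — that is, one should treat $\nabla\times c$ as a fixed vector field when forming the cross product with $a$, which is automatic since $(\nabla\times c)$ is already a well-defined vector field. For the reader's convenience one may alternatively just expand all six components of both sides by brute force and check equality; this is routine. I would therefore present the short index computation of step (i) and then combine as above, remarking (as the paper does) that $\nabla\cdot b = \nabla\cdot h = 0$ is not used, so the identity is the familiar one from vector analysis.
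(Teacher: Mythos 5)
Your proof is correct: the componentwise identity $[a\times(\nabla\times c)]_j=\sum_i a_i\partial_j c_i-(a\cdot\nabla)c_j$, applied to $(b,h)$ and $(h,b)$ and summed, gives exactly \eqref{2.5}, and you are right that the divergence-free hypotheses are never used. The paper itself offers no proof at all (it states the identity as ``well-known''), so there is nothing to compare against; your standard Levi-Civita verification is precisely the routine check one would supply, modulo a harmless slip in which index of the two epsilons is contracted (it should be the index shared between the outer cross product and the curl), which does not affect the final componentwise formula you use.
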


\begin{lemma} \label{a}
Assume that
\begin{equation}
\hat{F}(\xi)=\int_{\mathbb{R}^3} \frac{\hat{g}(\xi-\eta)\hat{h}(\eta)}{|\xi-\eta|+|\eta|} d \eta
\end{equation}
where $\hat{F}$ denotes the Fourier transform of F etc.
then for any $\frac{1}{2}<a<1$, and  $0<\kappa <1-a$ there holds,
\begin{equation}
{\left\| F \right\|}_{B^{\kappa}_{\infty,\infty}} \lesssim {\left\| g \right\|}_{B^{\kappa-1+a}_{\infty,\infty}} {\left\| h \right\|}_{B^{-a}_{\infty,\infty}}
\end{equation}
where ${\left\| F \right\|}_{B^{s}_{\infty,\infty}} \triangleq \sup\limits_{\lambda} {\left\|  \mathcal{P}_{\lambda}F \right\|}_{L^{\infty}} \lambda^s$ with $\mathcal{P}_\lambda$ be the Littlewood-Paley decompositions.
\end{lemma}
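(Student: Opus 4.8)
The plan is to run Bony's paraproduct decomposition on the bilinear form defining $F$, exploiting the fact that in \emph{every} frequency region the symbol $\tfrac{1}{|\xi-\eta|+|\eta|}$ is comparable to the reciprocal of the \emph{largest} of the two input frequencies, so that morally $F\approx\Lambda^{-1}(gh)$ with $\Lambda=(-\Delta)^{1/2}$, but with the frequency structure exploited region by region. Write $\mathcal P_j$ for $\mathcal P_{2^j}$ and $S_j=\sum_{k\le j-3}\mathcal P_k$. From the definition of the norm one has the Bernstein-type bounds $\|\mathcal P_j g\|_{L^\infty}\lesssim 2^{-j(\kappa-1+a)}\|g\|_{B^{\kappa-1+a}_{\infty,\infty}}$ and $\|\mathcal P_j h\|_{L^\infty}\lesssim 2^{ja}\|h\|_{B^{-a}_{\infty,\infty}}$, and, summing the geometric series,
\[
\|S_j g\|_{L^\infty}\lesssim 2^{j(1-a-\kappa)}\|g\|_{B^{\kappa-1+a}_{\infty,\infty}},\qquad \|S_j h\|_{L^\infty}\lesssim 2^{ja}\|h\|_{B^{-a}_{\infty,\infty}},
\]
where the first uses $\kappa<1-a$ (so that $1-a-\kappa>0$) and the second uses $a>0$. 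Split $F=F_{\mathrm{lh}}+F_{\mathrm{hl}}+F_{\mathrm{hh}}$ according to the regions $\{|\xi-\eta|\ll|\eta|\}$, $\{|\eta|\ll|\xi-\eta|\}$, $\{|\xi-\eta|\sim|\eta|\}$ in the defining integral, exactly mirroring $gh=T_gh+T_hg+R(g,h)$.

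Next I would estimate the three pieces. On $\mathrm{supp}\,\widehat{\mathcal P_jF_{\mathrm{lh}}}$ one has $|\xi-\eta|+|\eta|\sim|\eta|\sim 2^j$, so $\mathcal P_jF_{\mathrm{lh}}$ is $2^{-j}$ times a bilinear Fourier multiplier operator applied to $(S_{j'}g,\mathcal P_{j'}h)$ summed over $j'\sim j$, whose symbol, after parabolic rescaling to unit frequency, is smooth and homogeneous of degree $0$; hence it is bounded $L^\infty\times L^\infty\to L^\infty$ uniformly in $j$, and
\[
2^{j\kappa}\|\mathcal P_j F_{\mathrm{lh}}\|_{L^\infty}\lesssim 2^{j\kappa}\cdot 2^{-j}\cdot 2^{j(1-a-\kappa)}\cdot 2^{ja}\,\|g\|_{B^{\kappa-1+a}_{\infty,\infty}}\|h\|_{B^{-a}_{\infty,\infty}}=\|g\|_{B^{\kappa-1+a}_{\infty,\infty}}\|h\|_{B^{-a}_{\infty,\infty}}.
\]
The high–low term $F_{\mathrm{hl}}$ is handled identically with the cutoffs swapped ($|\xi-\eta|+|\eta|\sim|\xi-\eta|\sim 2^j$), using $\|\mathcal P_j g\|_{L^\infty}\lesssim 2^{j(1-a-\kappa)}\|g\|$ and $\|S_jh\|_{L^\infty}\lesssim 2^{ja}\|h\|$, giving the same bound. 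For the remainder $F_{\mathrm{hh}}$, at output frequency $2^{j_0}$ only blocks with $j\ge j_0-C$ contribute, the symbol there is $\sim 2^{-j}$ times a smooth degree-$0$ symbol (no degeneracy since $|\xi-\eta|\sim|\eta|\sim 2^j$), and summing,
\[
2^{j_0\kappa}\|\mathcal P_{j_0}F_{\mathrm{hh}}\|_{L^\infty}\lesssim \sum_{j\ge j_0-C}2^{(j_0-j)\kappa}\cdot 2^{j_0 0}\cdot\|g\|_{B^{\kappa-1+a}_{\infty,\infty}}\|h\|_{B^{-a}_{\infty,\infty}}\lesssim \|g\|_{B^{\kappa-1+a}_{\infty,\infty}}\|h\|_{B^{-a}_{\infty,\infty}},
\]
where convergence of $\sum_{j\ge j_0-C}2^{(j_0-j)\kappa}$ uses $\kappa>0$. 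Taking the supremum over $j_0$ (equivalently over dyadic $\lambda$) and combining the three pieces yields the claim.

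The main obstacle — everything else being bookkeeping of geometric series — is the justification, region by region, that the localized symbol $\tfrac{1}{|\xi-\eta|+|\eta|}$ defines a bilinear operator bounded on $L^\infty\times L^\infty$ with operator norm $\lesssim 2^{-(\text{largest frequency})}$. This is the one place a genuine harmonic-analysis input is required: either invoke bilinear Coifman–Meyer multiplier theory after the parabolic rescaling that turns each localized symbol into a smooth, degree-$0$, uniformly Hörmander-bounded symbol, or estimate the physical-side double kernel directly and apply Young's inequality. I also note that the hypotheses $\tfrac12<a<1$ and $0<\kappa<1-a$ enter precisely to make the three geometric series above summable (and to keep the admissible range non-empty): $\kappa<1-a$ for the low-frequency truncation of $g$, $a>0$ for that of $h$, and $\kappa>0$ for the high–high sum.
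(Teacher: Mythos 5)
Your proposal is correct and follows essentially the same route as the paper: a Littlewood--Paley/paraproduct splitting of the bilinear symbol into low--high, high--low, and comparable-frequency regions, an $L^1$ kernel bound for the rescaled localized multiplier giving $L^\infty\times L^\infty\to L^\infty$ boundedness with norm comparable to the reciprocal of the dominant frequency, and geometric-series summation using exactly the conditions $\kappa<1-a$, $a>0$, and $\kappa>0$. The paper phrases the low-frequency sums as explicit double sums over dyadic pairs rather than via $S_j$, and verifies the kernel bound by noting all derivatives of the rescaled symbol are bounded, but these are cosmetic differences.
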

\begin{proof}
 We take
a   nonnegative smooth  function  $  \psi $ with
%\begin{equation}
%Supp \ \phi \subset \{ \frac{1}{2} \leq |\xi| \leq 2 \},%\quad 0\leq\phi\leq1,
%\end{equation}
%and
\begin{equation}
Supp \ \psi \subset \{ \frac{1}{4} \leq |\xi| \leq 4\}.%\ \mbox{and} \
%\psi \equiv 1 ,\ \mbox{if} \ \frac{1}{2} \leq |\xi| \leq 2,
\end{equation}
 Moreover, for all $\xi\in \mathbb{R}^3$ , $0\leq \psi\leq 1$ and
\begin{eqnarray}\nonumber
 %\sum_{j\in\mathbb{Z}} \phi(2^{-j}\xi)=
  \sum_{ k\in\mathbb{Z}} \psi(2^{-k}\xi) = 1.
\end{eqnarray}

Write $g_\lambda=\mathcal{P}_\lambda g=\mathcal{F}^{-1}(\psi(\lambda^{-1} \xi) \hat{g})
$ and $h_\mu=\mathcal{P}_\mu h=\mathcal{F}^{-1}(\psi(\mu^{-1} \xi) \hat{h})
$, we have
\begin{equation}
g=\sum_{\lambda=2^j} g_{\lambda},
\end{equation}
and
\begin{equation}
h=\sum_{\mu=2^k } h_{\mu}.
\end{equation}

Thus, we rewrite
\begin{equation}
\hat{F}(\xi)=\sum_{\lambda=2^j, \mu=2^k} \int_{\mathbb{R}^3}\frac{\psi_1({\lambda}^{-1}(\xi-\eta))\psi_1({\mu}^{-1}\eta)}
{|\xi-\eta|+|\eta|}\hat{g_{\lambda}}(\xi-\eta)\hat{h_{\mu}}(\eta)d\eta,
\end{equation}
where $\psi_1\in C_0^{\infty}$ such that $\psi_1(\xi)=1$ in the support of $\psi$. Therefore
\begin{equation}
F(x)=\sum_{\lambda=2^j, \mu=2^k} \int_{\mathbb{R}^3 \times \mathbb{R}^3} \Phi_{\mu\lambda} (x-y,x-z)g_{\lambda}(y)h_{\mu}(z) dy dz
\end{equation}
with
\begin{equation}
\hat{\Phi}_{\mu\lambda}(\xi,\eta)=\frac{\psi_1({\lambda}^{-1} \xi)\psi_1({\mu}^{-1} \eta)}{|\xi|+|\eta|}.
\end{equation}

We then split $F(x)$ as
\begin{eqnarray}
F(x)& =& \sum_{\lambda \leq 2^{-10} \mu} \int_{\mathbb{R}^3 \times \mathbb{R}^3}\Phi_{\mu\lambda} (x-y,x-z)g_{\lambda}(y)h_{\mu}(z) dy dz \\\nonumber
&&+\sum_{\mu \leq 2^{-10} \lambda}\Phi_{\mu\lambda} (x-y,x-z)g_{\lambda}(y)h_{\mu}(z) dy dz \\\nonumber
&&+\sum_{2^{-10} \lambda \leq \mu \leq 2^{10}\lambda }\Phi_{\mu\lambda} (x-y,x-z)g_{\lambda}(y)h_{\mu}(z) dy dz \\\nonumber
&\triangleq&F_1(x)+F_2(x)+F_3(x).
\end{eqnarray}

Thus,
\begin{eqnarray}
{\|F_1(x)\|}_{B^{\kappa}_{\infty,\infty}} & \lesssim& \sup\limits_{\mu} {\mu}^{\kappa} \sum_{\lambda \leq 2^{-10} \mu} {\|\Phi_{\mu\lambda}\|}_{L^1}{\|g_{\lambda}\|}_{L^{\infty}}{\|h_{\mu}\|}_{L^{\infty}}  \\\nonumber
& \lesssim & \sup\limits_{\mu} {\mu}^{\kappa} \left(\sum_{\lambda \leq 2^{-10} \mu} {\|\Phi_{\mu\lambda}\|}_{L^1} {\lambda}^{-\kappa +1-a} {\mu}^{a} \right) {\|g\|}_{B^{\kappa -1+a}_{\infty,\infty}} {\|h\|}_{B^{-a}_{\infty,\infty}}.
\end{eqnarray}

Noting that
\begin{equation}
{\|{\Phi}_{\mu\lambda}\|}_{L^1}={\mu}^{-1} {\|\tilde{{\Phi}}_{\mu\lambda}\|}_{L^1}
\end{equation}
where
\begin{equation}
\hat{\tilde{\Phi}}_{\mu\lambda}=\frac{\psi_1(\xi)\psi_1(\eta)}{{\mu}^{-1}\lambda |\xi|+|\eta|}.
\end{equation}

Obviously any differentiation of $\hat{\tilde{\Phi}}_{\mu\lambda}$ is bounded, therefore $\tilde{{\Phi}}_{\mu\lambda}$ decays in any polynomial.
Thus, ${\|\tilde{\Phi}_{\mu\lambda}\|}_{L^1} \leq C$,
therefore we have
\begin{eqnarray*}
{\|F_1(x)\|}_{B^{\kappa}_{\infty,\infty}} & \leq &
\sup\limits_{\mu} {\mu}^{\kappa} \left( {\sum_{\lambda \leq 2^{-10}\mu}} {{\lambda}^{-\kappa+1  -a} {\mu}^{a-1}} \right) {\|g\|}_{B^{\kappa -1+a}_{\infty,\infty}} {\|h\|}_{B^{-a
}_{\infty,\infty}} \\
& \lesssim &  {\|g\|}_{B^{\kappa -1+a}_{\infty,\infty}} {\|h\|}_{B^{-a}_{\infty,\infty}}.
\end{eqnarray*}

We  can handle $F_2$ in a similar way, here we use the similar fact ${\|{\Phi}_{\mu\lambda}\|}_{L^1}={\lambda}^{-1} {\|\tilde{{\Phi}}^1_{\mu\lambda}\|}_{L^1}\lesssim \lambda^{-1}$ when $\mu\leq 2^{-10}\lambda$.

 To estimate $F_3$,  noting there are finite terms for $2^{-10}\lambda\leq \mu\leq 2^{10}\lambda$ by a given $\mu$, therefore
\begin{eqnarray}
&&{\|F_3(x)\|}_{B^{\kappa}_{\infty,\infty}} \lesssim\sup\limits_{\mu_1} {\mu_1}^{\kappa} \sum_{\mu\ge\mu_1}\sum_{ 2^{-10} \mu\le \lambda \leq 2^{10} \mu} {\|\Phi_{\mu\lambda}\|}_{L^1}{\|g_{\lambda}\|}_{L^{\infty}}{\|h_{\mu}\|}_{L^{\infty}}  \\\nonumber
& & \lesssim \sup\limits_{\mu_1} {\mu_1}^{\kappa}\sum_{\mu\ge \mu_1} \left( \sum_{ 2^{-10} \mu\le \lambda \leq 2^{10} \mu} {\|\Phi_{\mu\lambda}\|}_{L^1} {\lambda}^{-\kappa +1-a} {\mu}^{a} \right) {\|g\|}_{B^{\kappa -1+a}_{\infty,\infty}} {\|h\|}_{B^{-a}_{\infty,\infty}}
\\\nonumber
&& \lesssim \sup\limits_{\mu_1} {\mu_1}^{\kappa} \sum_{\mu\ge \mu_1}\left({\mu}^{-\kappa -a} {\mu}^{a} \right) {\|g\|}_{B^{\kappa -1+a}_{\infty,\infty}} {\|h\|}_{B^{-a}_{\infty,\infty}}\lesssim {\|g\|}_{B^{\kappa -1+a}_{\infty,\infty}} {\|h\|}_{B^{-a}_{\infty,\infty}}.
\end{eqnarray}

\end{proof}
\begin{corr} \label{b}
Let F,g,h are all periodic functions in $\mathbb{T}^3$,
\begin{equation}
g=\sum_n a_n e^{\sqrt{-1}n \cdot x},\ h=\sum_n b_n e^{\sqrt{-1}n \cdot x},\ F=C_n e^{\sqrt{-1}n \cdot x}
\end{equation}
and
\begin{equation}
\int_{\mathbb{T}^3} g dx=\int_{\mathbb{T}^3} h dx=0.
\end{equation}
If $C_n=\sum_{j+k=n} \frac{a_j b_k}{|j|+|k|}$, then regard F,g,h as functions on $\mathbb{R}^3$, for $\frac{1}{2}<a<1$ and $0<\kappa <1-a$,  we have
\begin{equation}
{\| F \|}_{B^{\kappa}_{\infty,\infty}(\mathbb{R}^3)} \lesssim {\| g \|}_{B^{\kappa -1+a}_{\infty,\infty}(\mathbb{R}^3)} {\| h \|}_{B^{-a}_{\infty,\infty}(\mathbb{R}^3)}.
\end{equation}
\end{corr}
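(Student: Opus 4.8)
The plan is to deduce the corollary directly from Lemma \ref{a}, by recognising that the periodic bilinear sum defining $F$ is nothing but the bilinear operator of Lemma \ref{a} evaluated on the (now periodic) inputs $g,h$, and then re-running the Littlewood--Paley argument of that lemma essentially verbatim. First I would regard $g$ and $h$ as tempered distributions on $\mathbb{R}^3$; since their Fourier transforms are supported on the lattice $\mathbb{Z}^3$, each Littlewood--Paley block $g_\lambda=\mathcal{P}_\lambda g$ and $h_\mu=\mathcal{P}_\mu h$ is a \emph{finite} trigonometric polynomial, hence lies in $L^\infty(\mathbb{R}^3)$, with $\|g_\lambda\|_{L^\infty}\le\lambda^{-(\kappa-1+a)}\|g\|_{B^{\kappa-1+a}_{\infty,\infty}(\mathbb{R}^3)}$ and $\|h_\mu\|_{L^\infty}\le\mu^{a}\|h\|_{B^{-a}_{\infty,\infty}(\mathbb{R}^3)}$. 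The mean-zero conditions $\int_{\mathbb{T}^3}g\,dx=\int_{\mathbb{T}^3}h\,dx=0$ guarantee that only modes with $|j|\ge1$ and $|k|\ge1$ occur, so $|j|+|k|\ge1$ and no division by zero arises.

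The key identification is the following. Writing, as in the proof of Lemma \ref{a},
\[
F=\sum_{\lambda=2^{j},\ \mu=2^{k}}\ \int_{\mathbb{R}^3\times\mathbb{R}^3}\Phi_{\mu\lambda}(x-y,x-z)\,g_\lambda(y)\,h_\mu(z)\,dy\,dz,\qquad \hat{\Phi}_{\mu\lambda}(\xi,\eta)=\frac{\psi_1(\lambda^{-1}\xi)\psi_1(\mu^{-1}\eta)}{|\xi|+|\eta|},
\]
and expanding $g_\lambda(y)=\sum_{j}c_{j}e^{\sqrt{-1}j\cdot y}$, $h_\mu(z)=\sum_{k}d_{k}e^{\sqrt{-1}k\cdot z}$ (finite sums), one computes
\[
\int_{\mathbb{R}^3\times\mathbb{R}^3}\Phi_{\mu\lambda}(x-y,x-z)\,e^{\sqrt{-1}j\cdot y}e^{\sqrt{-1}k\cdot z}\,dy\,dz=\hat{\Phi}_{\mu\lambda}(j,k)\,e^{\sqrt{-1}(j+k)\cdot x},
\]
so that, using $\psi_1\equiv1$ on $\text{supp}\,\psi$ and absorbing the cutoffs produced by $\mathcal{P}_\lambda$ and $\mathcal{P}_\mu$, the above collapses to $\sum_{j,k}\frac{\psi(\lambda^{-1}j)\psi(\mu^{-1}k)\,a_{j}b_{k}}{|j|+|k|}e^{\sqrt{-1}(j+k)\cdot x}$. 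Summing over the dyadic scales and using $\sum_{\lambda}\psi(\lambda^{-1}\cdot)\equiv1$ on $\mathbb{Z}^3\setminus\{0\}$ (here the mean-zero hypothesis is used again) then recovers exactly the periodic function with Fourier coefficients $C_n=\sum_{j+k=n}\frac{a_{j}b_{k}}{|j|+|k|}$; all the dyadic sums converge in $\mathcal{S}'(\mathbb{R}^3)$ thanks to the weighted $L^\infty$ bounds on $g_\lambda,h_\mu$ and the rapid decay of $\Phi_{\mu\lambda}$, so that the limit is literally $F$ and no further approximation is needed.

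With this identification, the remaining estimate is word-for-word the computation in the proof of Lemma \ref{a}: split $F=F_1+F_2+F_3$ according to $\lambda\le2^{-10}\mu$, $\mu\le2^{-10}\lambda$ and $2^{-10}\lambda\le\mu\le2^{10}\lambda$; use the scaling identities $\|\Phi_{\mu\lambda}\|_{L^1}=\mu^{-1}\|\tilde{\Phi}_{\mu\lambda}\|_{L^1}$ and $\|\Phi_{\mu\lambda}\|_{L^1}=\lambda^{-1}\|\tilde{\Phi}^{1}_{\mu\lambda}\|_{L^1}$ together with the uniform bound $\|\tilde{\Phi}_{\mu\lambda}\|_{L^1}\le C$ coming from the Mihlin-type estimate on $\hat{\tilde{\Phi}}_{\mu\lambda}$; and sum the resulting geometric series in the dyadic parameters, which converges precisely because $\frac{1}{2}<a<1$ and $0<\kappa<1-a$. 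None of these steps is sensitive to periodicity, as they use only $L^1$ bounds on the kernels $\Phi_{\mu\lambda}$ and weighted $L^\infty$ bounds on the Littlewood--Paley blocks; hence $\|F\|_{B^\kappa_{\infty,\infty}(\mathbb{R}^3)}\lesssim\|g\|_{B^{\kappa-1+a}_{\infty,\infty}(\mathbb{R}^3)}\|h\|_{B^{-a}_{\infty,\infty}(\mathbb{R}^3)}$.

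The step I expect to be the main obstacle is the rigorous identification in the second paragraph: one must justify that the \emph{a priori} formal manipulation --- replacing the non-convergent Fourier-side integral $\int\frac{\hat g(\xi-\eta)\hat h(\eta)}{|\xi-\eta|+|\eta|}\,d\eta$ by the discrete convolution $\sum_{j+k=n}\frac{a_{j}b_{k}}{|j|+|k|}$ --- is legitimate, i.e.\ that the bilinear operator of Lemma \ref{a} really sends periodic inputs to the claimed periodic output. The cleanest route is the Littlewood--Paley-block computation just sketched (equivalently, a de Leeuw-type transference argument): because every block is a finite trigonometric sum, each interchange of summation and integration is justified term by term, and the only limit involved is the $\mathcal{S}'(\mathbb{R}^3)$ convergence of the dyadic series, which holds by the polynomial growth of the coefficients $C_n$.
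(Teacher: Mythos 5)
Your proposal is correct and takes essentially the same route as the paper: the paper's own proof simply observes that $\mathcal{F}(g)=\sum_n a_n\delta(\xi-n)$ (and likewise for $h$), so that the bilinear expression of Lemma \ref{a} evaluated on periodic inputs is exactly the periodic function with coefficients $C_n=\sum_{j+k=n}\frac{a_jb_k}{|j|+|k|}$, and then invokes Lemma \ref{a}. Your version supplies the justification of this formal identification (via the finiteness of the trigonometric Littlewood--Paley blocks and the mean-zero condition ensuring $|j|+|k|\ge 1$) that the paper leaves implicit, but the underlying argument is the same.
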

\begin{proof}
We have for example
\begin{equation*}
\mathcal{F}(g) = \sum_n a_n \mathcal{F}(e^{\sqrt{-1}n \cdot x}) =\sum_n a_n \delta(-n+\xi),
\end{equation*}
here and hereafter, we use the notations $\mathcal{F}$ and $\mathcal{F}^{-1}$ be the Fourier transform and its inversion respectively.
Therefore, Corollary $\ref{b}$ follows from Lemma $\ref{a}$.
\end{proof}
%\begin{lemma}
%For any function $h(x),x\in T^3$ satisfying
%\begin{equation}
%\int_{T^3}h(x)dx =0,
%\end{equation}
%then we have
%\begin{equation}
%|h|_{B^{-1-s,\infty}_\infty(T^3)}\leq C |h|_{B^{-s}_{BMO}(T^3)}
%\end{equation}
%\end{lemma}
%\begin{proof}
%First, we prove
%\begin{equation}
%|D^{-1}h|_{B^{0,\infty}_\infty}\leq |h|_{B^{0,\infty}_\infty} \quad \mbox{and} \quad |D^{-1}h|_{BMO}\leq C|h|_{BMO},
%\end{equation}
%where  $D= \sqrt{-\triangle}.$

%Since
 %$\int_{T^3}h(x)dx=0$, so $h$ has no zero  mode.  Then $D^{-1}$ can also be regarded as a pseudo-differential operator of order zero, so it maps $B^{0,\infty}_\infty$ to $B^{0,\infty}_\infty$.
%By replacing $h$ by $D^{-s}h$ we can get the desired results.
%\end{proof}

\section{Some prepare work for linear heat equation}
We introduce   the following notations:
\begin{deff}
 Let $g$ be a function defined on $\mathbb{R}^3\times [0,T^*)\ \ (0<T^*\leq +\infty)$,  we say $g\in \mathbb{X}_{T^*} $ if
\begin{eqnarray}\label{3.1}
\|g\|_{ \mathbb{X}_{T^*}}&\triangleq& \sup\limits_{0<t\leq T^*}t^{\frac{ 1}{2} }\| g\|_{L^\infty(\mathbb{R}^3)}
\\\nonumber
&&+\sup\limits_{ 0<  r\leq \sqrt{T^*}}\bigg(r^{-3}\int_{Q(y_0,r)}|  g|^2dy  dt
\bigg)^{\frac12} < +\infty.
\end{eqnarray}
 We say $g\in \mathbb{Z}^d_{T^*} $ if
\begin{eqnarray}\label{3.2}
\|g\|_{ \mathbb{Z}_{T^*}^d}&\triangleq& \sup\limits_{0<t\leq T^*}t^{\frac{ 1-d}{2} }\| g\|_{L^\infty(\mathbb{R}^3)}
\\\nonumber
&&+\sup\limits_{ 0<  r\leq \sqrt{T^*}}\bigg(r^{-(1+2d)}\int_{Q(y_0,r)}|  g|^2dy  dt
\bigg)^{\frac12} < +\infty.
\end{eqnarray}
 We say $g\in \mathbb{Y}_{T^*} $ if
\begin{eqnarray}\label{3.3}
\|g\|_{ \mathbb{Y}_{T^*}}&\triangleq& \sup\limits_{0<t\leq T^*}t\| g\|_{L^\infty(\mathbb{R}^3)}
\\\nonumber
&&+\sup\limits_{ 0<  r\leq \sqrt{T^*}} r^{-3}\int_{Q(y_0,r)}|  g|dy  dt
< +\infty.
\end{eqnarray}

\end{deff}

To go ahead,
we shall give some estimates for the following homogenous heat equation in $\mathbb{R}^3$:
\begin{equation}\label{3.4}
\begin{cases}
\partial_tu-\Delta u=0,\\
 u|_{t=0}=u_0, \quad \nabla \cdot u_0=0.
\end{cases}
\end{equation}
We have the following proposition:
\begin{proposition} \label{proposition 3.1}
There exists a uniform constant $C_0$, such that
\begin{equation}\label{3.5}
\|u \|_{X_{T^*}} \leq C_0 \|u_0\|_{BMO^{-1}} .
\end{equation}
\end{proposition}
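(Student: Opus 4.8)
The plan is to bound the two parts of the norm $\|\cdot\|_{\mathbb{X}_{T^*}}$ in \eqref{3.1} separately: the parabolic Carleson part is essentially built into the definition of $BMO^{-1}$, while the $L^\infty$ part carries all the work. For the second term, observe that $u=e^{t\Delta}u_0$ is precisely the caloric extension $W$ entering the equivalent definition of $[\,\cdot\,]_{BMO^{-1}}$ (here $n=3$), so
\[
\sup_{0<r\le\sqrt{T^*}}\Big(r^{-3}\int_{Q(y_0,r)}|u|^2\,dy\,dt\Big)^{1/2}\le\sup_{r>0,\ y_0}\Big(r^{-3}\int_{Q(y_0,r)}|W|^2\,dy\,dt\Big)^{1/2}=[u_0]_{BMO^{-1}}\le\|u_0\|_{BMO^{-1}},
\]
with constant $1$; the hypothesis $\nabla\cdot u_0=0$ plays no role in this proposition (it matters only later, through the Leray projection).

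For the first term I must prove $\|e^{t\Delta}u_0\|_{L^\infty(\mathbb{R}^3)}\lesssim t^{-1/2}\|u_0\|_{BMO^{-1}}$. Fix $x_0\in\mathbb{R}^3$ and $t>0$ and write $W(\cdot,s)=e^{s\Delta}u_0$. Since $W(x_0,2t)=e^{(2t-s)\Delta}W(\cdot,s)(x_0)$ for every $s<2t$, averaging in $s$ over $(0,t)$ yields the representation
\[
W(x_0,2t)=\frac1t\int_0^t\int_{\mathbb{R}^3}G(x_0-y,2t-s)\,W(y,s)\,dy\,ds,
\]
$G$ being the Gaussian heat kernel; averaging over $(0,t)$ keeps $2t-s\in(t,2t)$, so $|G(x_0-y,2t-s)|\lesssim t^{-3/2}e^{-c|x_0-y|^2/t}$ with no concentration as $s$ varies. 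I then split the $y$-integral over the ball $\{|x_0-y|<\sqrt t\}$ and the dyadic shells $A_k=\{2^k\sqrt t\le|x_0-y|<2^{k+1}\sqrt t\}$, $k\ge0$, and apply Cauchy--Schwarz in $(y,s)$ on each piece.

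On $A_k\times(0,t)$ one has $\int_0^t\int_{A_k}|G|^2\lesssim 2^{3k}t^{-1/2}e^{-c4^k}$, while, since $A_k\subset B(x_0,2^{k+1}\sqrt t)$ and $(0,t)\subset\bigl(0,(2^{k+1}\sqrt t)^2\bigr)$,
\[
\int_0^t\int_{A_k}|W|^2\le\int_{Q(x_0,2^{k+1}\sqrt t)}|W|^2\le[u_0]_{BMO^{-1}}^2\,(2^{k+1}\sqrt t)^3\lesssim[u_0]_{BMO^{-1}}^2\,2^{3k}t^{3/2}.
\]
Multiplying the two square roots and the factor $1/t$, the shell $A_k$ contributes $\lesssim t^{-1/2}2^{3k}e^{-c4^k}[u_0]_{BMO^{-1}}$ (the central ball is handled identically, the Gaussian factor replaced by $1$, giving one convergent term), and $\sum_{k\ge0}2^{3k}e^{-c4^k}<\infty$. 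Summing gives $|W(x_0,2t)|\lesssim t^{-1/2}[u_0]_{BMO^{-1}}$, i.e. $\|e^{\tau\Delta}u_0\|_{L^\infty}\lesssim\tau^{-1/2}\|u_0\|_{BMO^{-1}}$ for all $\tau>0$, hence $\sup_{0<t\le T^*}t^{1/2}\|u\|_{L^\infty}\lesssim\|u_0\|_{BMO^{-1}}$. Together with the Carleson bound this proves \eqref{3.5} with a uniform constant $C_0$.

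The crux is the $L^\infty$ estimate, and the one genuine subtlety is that the $BMO^{-1}$ hypothesis controls only space--time $L^2$ averages of $W$ over parabolic cylinders, never the $L^2$ norm of an individual time slice; a naive single-slice representation $W(x_0,2t)=\int G(x_0-y,t)W(y,t)\,dy$ fed into Cauchy--Schwarz therefore fails, and the remedy is exactly to average the heat representation over a full time interval kept away from the target time. The remaining steps — the Gaussian tail summation and matching powers of $t$ by scaling — are routine. For full rigor one first carries out the computation for $u_0\in\mathcal{S}(\mathbb{R}^3)$, where every integral is absolutely convergent and the manipulations are plainly justified, and passes to general $u_0\in BMO^{-1}$ by approximation; for the smooth periodic data of Theorems \ref{main theorem1}--\ref{main theorem2} no such limiting argument is even needed.
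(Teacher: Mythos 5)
Your proposal is correct and follows essentially the same route as the paper: both reduce the Carleson part to the definition of $BMO^{-1}$ and prove the $L^\infty$ bound by averaging the semigroup identity $W(x_0,T)=e^{(T-s)\Delta}W(\cdot,s)(x_0)$ over a time interval of length comparable to $T$ kept away from $T$, then applying Cauchy--Schwarz and summing Gaussian tails over dyadic shells against the parabolic Carleson bound. The only differences are cosmetic (the paper averages over $[t/8,3t/8]$ and uses the Gaussian as the weight in Cauchy--Schwarz, you average over $(0,t)$ targeting time $2t$ and apply Cauchy--Schwarz shell by shell), and your identification of the single-time-slice pitfall is precisely the point of the paper's averaging step.
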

\begin{proof}
We write the solution of the linear equation as
\begin{equation}\label{3.6}
u=S(t)u_0,
\end{equation}
where $S(t)$ is the heat flow.
Then   we have:
\begin{eqnarray}\label{3.7}
&&t^{\frac{ 1}{2} }\| u\|_{L^\infty(\mathbb{R}^3)}
= 4 t^{-\frac{1}{2}}\|\int_{\frac{t}{8}}^{\frac{3t}8} S(t-\tau)(S(\tau)u_0)d\tau\|_{L^\infty(\mathbb{R}^3)}
\\\nonumber
&&\lesssim  t^{-\frac{1}{2}}\|\int_{\frac{t}{8}}^{\frac{3t}8}\big(\int_{\mathbb{R}^3} \frac1{\sqrt{t-\tau}^3}e^{-\frac{(y-\tilde{y})^2}{4(t-\tau)}}(e^{\tau\Delta}u_0)^2d\tilde{y}\big)^{1/2}   d\tau\|_{L^\infty(\mathbb{R}^3)}
     \\\nonumber
&&\lesssim t^{-\frac12}\|\big(\int_{\frac{t}{8}}^{\frac{3t}8}\int_{\mathbb{R}^3} \frac1{\sqrt{t-\tau}^3} e^{-\frac{(y-\tilde{y})^2}{4(t-\tau)}}(e^{\tau\Delta}u_0)^2d\tilde{y}d\tau\big)^{1/2}\|_{L^\infty(\mathbb{R}^3)}\cdot \sqrt{t}
\\\nonumber
&&\lesssim\|\big(\sum\limits_{q=0}^\infty  e^{-\frac{q^2}{4}}\frac1{\sqrt{t}^3}\int_{\frac{t}{8}}^{\frac{3t}8}\int_{q\leq\frac{|y-\tilde{y}|}{\sqrt{t}}\leq q+1}
(e^{\tau\Delta}u_0)^2d\tilde{y}d\tau\big)^{1/2}\|_{L^\infty(\mathbb{R}^3)}  \\\nonumber
&&\lesssim     \sup\limits_{y\in \mathbb{R}^3} \big(  \frac1{\sqrt{t}^3}\int_{0}^{t}\int_{B(y,\sqrt{t})}
(e^{\tau\Delta}u_0)^2d\tilde{y}d\tau\big)^{1/2}\lesssim \|u_0\|_{BMO^{-1 }(\mathbb{R}^3)}.
\end{eqnarray}
 By using the  definition $ \sup\limits_{y_0\in  \mathbb{R}^n}  \big(r^{-n}\int_{Q(y_0,r)}u^2dyd\tau\big)^{1/2}=\|u_0\|_{BMO^{-1}}$.
 Thus, we finished the proof of our Theorem.

\end{proof}

\begin{proposition}Let $u$ be the solution of system \eqref{3.4}, then there exists a uniform constant $C_0>0$, such that
\begin{equation}\label{3.5}
\left(\int_0^{+\infty}\|u(t) \|_{L^\infty}^2 dt\right)^{\frac{1}{2}} \leq C_0 \|u_0\|_{B^{-1}_{\infty ,2
}} .
\end{equation}
\end{proposition}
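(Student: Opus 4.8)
The plan is to reduce the statement to the spectral (Bernstein‑type) decay of the heat semigroup on Littlewood--Paley blocks, followed by an elementary Schur‑type summation in frequency; the hypothesis $\nabla\cdot u_0=0$ will play no role. I would begin by fixing the homogeneous Littlewood--Paley decomposition $u_0=\sum_{j\in\mathbb{Z}}\Delta_j u_0$ and writing $a_j:=\|\Delta_j u_0\|_{L^\infty}$, so that by definition $\|u_0\|_{B^{-1}_{\infty,2}}^2=\sum_{j}2^{-2j}a_j^2$. (Here $B^{-1}_{\infty,2}$ must be read as the homogeneous space, as is forced by the requirement that $\int_0^\infty\|u(t)\|_{L^\infty}^2\,dt$ be finite.)

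\textbf{Step 1.} First I would establish the decay estimate $\|e^{t\Delta}\Delta_j f\|_{L^\infty}\le Ce^{-ct2^{2j}}\|\Delta_j f\|_{L^\infty}$ with absolute constants $c,C>0$. This is classical: $e^{t\Delta}\Delta_j$ has Fourier multiplier $e^{-t|\xi|^2}\psi(2^{-j}\xi)$ with $\psi$ supported in an annulus $|\xi|\sim1$; rescaling $\xi=2^j\eta$ and pulling out the factor $e^{-ct2^{2j}}$ leaves a symbol all of whose derivatives are bounded uniformly in $t$ and $j$ (using $\sup_{s\ge0}s^Ne^{-s}<\infty$), hence a convolution kernel of uniformly bounded $L^1$‑norm. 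Summing over $j$ then gives $\|u(t)\|_{L^\infty}\le C\sum_j e^{-ct2^{2j}}a_j$ for each $t>0$, the series converging since $(2^{-j}a_j)\in\ell^2$.

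\textbf{Step 2.} Next I would square this pointwise‑in‑$t$ bound (legitimate since all summands are nonnegative) and integrate:
\[
\int_0^\infty\|u(t)\|_{L^\infty}^2\,dt\ \le\ C^2\sum_{j,l}a_j a_l\int_0^\infty e^{-ct(2^{2j}+2^{2l})}\,dt\ =\ \frac{C^2}{c}\sum_{j,l}\frac{a_j a_l}{2^{2j}+2^{2l}}.
\]
To finish, set $a_j=2^j b_j$, so that $\sum_j b_j^2=\|u_0\|_{B^{-1}_{\infty,2}}^2$; the kernel becomes $K(j,l)=2^{j+l}/(2^{2j}+2^{2l})=(2^{j-l}+2^{l-j})^{-1}\le 2^{-|j-l|}$, and Young's inequality for convolution against the summable sequence $(2^{-|m|})_{m\in\mathbb{Z}}$ (equivalently $2b_jb_l\le b_j^2+b_l^2$) gives $\sum_{j,l}K(j,l)b_jb_l\lesssim\sum_j b_j^2$. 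Combining the two displays yields the asserted bound with $C_0\lesssim C/\sqrt c$.

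As for the main obstacle: there is essentially none beyond the heat‑semigroup bound of Step 1, which is standard; the remaining content is merely the observation that integrating the Gaussian‑in‑time factor turns the double sum into the harmonic‑type kernel $1/(2^{2j}+2^{2l})$, which is bounded on $\ell^2$ after absorbing the weight $2^{-j}$ into $b_j$. The only point requiring care is the homogeneous/inhomogeneous convention noted above (indeed this lemma is just one direction of the heat characterization of $\dot B^{-1}_{\infty,2}$).
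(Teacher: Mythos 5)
Your proof is correct. The paper does not actually prove this proposition — it defers entirely to Lemma 1.5.1 of Chemin's notes — and your argument (Littlewood--Paley decomposition, the decay bound $\|e^{t\Delta}\Delta_j f\|_{L^\infty}\lesssim e^{-ct2^{2j}}\|\Delta_j f\|_{L^\infty}$, integration in $t$, and the Schur bound for the kernel $2^{j+l}/(2^{2j}+2^{2l})\le 2^{-|j-l|}$) is exactly the standard proof of this direction of the heat characterization of $\dot B^{-1}_{\infty,2}$, i.e.\ the same route as the cited reference; your remark that the space must be read as the homogeneous one is also apt.
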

\begin{proof}
See the proof of Lemma 1.5.1 in Chemin \cite{chemin}.
\end{proof}

\begin{proposition} \label{proposition 3.108}
For any $0<d<1$,  there exists a positive constant $C_0=C_0(d)$, such that
\begin{equation}\label{3.8}
\|(-\Delta)^{-d/2}u \|_{Z_{T^*}^d} \leq C_0 \|u_0\|_{BMO^{-1}} .
\end{equation}
\end{proposition}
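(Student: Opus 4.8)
The plan is to reduce the claimed bound to the already-established Proposition 3.1 together with the pointwise heat kernel bounds of Lemma 2.1 and Lemma 2.2. Write $v = (-\Delta)^{-d/2} u = (-\Delta)^{-d/2} S(t) u_0$, and set $w = S(t) u_0 = e^{t\Delta} u_0$, so that $v = (-\Delta)^{-d/2} w$. The key structural observation is that the operator $(-\Delta)^{-d/2} e^{t\Delta}$ has a convolution kernel which, by scaling, is $t^{-(3-d)/2}$ times a fixed Schwartz-like profile evaluated at $x/\sqrt t$; more precisely its kernel $\mathcal{G}_d(x,t)$ satisfies a pointwise bound of the form $|\mathcal G_d(x,t)| \lesssim t^{d/2}(\sqrt t + |x|)^{-3}$ (gaining $t^{d/2}$ relative to the pure heat kernel because of the extra $d$ powers of decay in frequency from $(-\Delta)^{-d/2}$; one can see this either directly from the representation $(-\Delta)^{-d/2} = c_d\int_0^\infty \tau^{d/2-1} e^{\tau\Delta}\,d\tau$ and Lemma 2.2, or by Fourier-analytic scaling). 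This is the analogue of Lemma 2.1 adapted to our fractional operator.

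From here I would mimic the proof of Proposition 3.1 termwise. For the $L^\infty$ part of $\|v\|_{Z^d_{T^*}}$, I want to bound $t^{(1-d)/2}\|v(t)\|_{L^\infty}$. Following the Koch--Tataru trick used in \eqref{3.7}, I write $v(t) = c\,t^{-1}\int_{t/8}^{3t/8}\big((-\Delta)^{-d/2}S(t-\tau)\big)\big(S(\tau)u_0\big)\,d\tau$, apply the pointwise kernel bound for $(-\Delta)^{-d/2}S(t-\tau)$ together with Cauchy--Schwarz in $\tilde y$ (exactly as in the display for $t^{1/2}\|u\|_{L^\infty}$), and decompose $\mathbb R^3$ into the annuli $q \le |y-\tilde y|/\sqrt t \le q+1$. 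The extra factor $t^{d/2}$ in the kernel is precisely what converts the $t^{-1/2}$ gain of Proposition 3.1 into the $t^{(d-1)/2}$ needed here, and the remaining spatial-time integral of $(e^{\tau\Delta}u_0)^2$ over $B(y,\sqrt t)\times(0,t)$ is controlled by $\|u_0\|_{BMO^{-1}}^2$ by the $BMO^{-1}$ characterization. For the Carleson-measure part, I must show $r^{-(1+2d)}\int_{Q(y_0,r)}|v|^2\,dy\,dt \lesssim \|u_0\|_{BMO^{-1}}^2$. Here I would split $u_0 = u_0^{\rm in} + u_0^{\rm out}$ relative to the ball $B(y_0, 2r)$: for the ``inner'' piece the smoothing/boundedness of $(-\Delta)^{-d/2}$ on the relevant scale costs exactly $r^{2d}$ (again by the $t^{d/2}$-improved kernel, with $t\le r^2$), reducing to the $BMO^{-1}$ Carleson bound $r^{-3}\int_{Q(y_0,r)}|e^{t\Delta}u_0^{\rm in}|^2 \lesssim \|u_0\|_{BMO^{-1}}^2$; for the ``outer'' piece I use the fast off-diagonal decay of $\mathcal G_d$ (polynomial decay of the profile) to sum the contributions of the dyadic annuli $2^j r \le |x-y_0| \le 2^{j+1}r$, each controlled by the $BMO^{-1}$ norm, the geometric series converging because $d<1$.

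The main obstacle I anticipate is establishing the pointwise kernel bound $|\mathcal G_d(x,t)| \lesssim t^{d/2}(\sqrt t+|x|)^{-3}$ with honest control — naively $(-\Delta)^{-d/2}$ is a nonlocal operator and one must check that the subordination integral $\int_0^\infty \tau^{d/2-1}\mathbb K(x,t+\tau)\,d\tau$ both converges (this needs $d<1$, so that $\tau^{d/2-1}$ is integrable at $0$ after using $\mathbb K(x,t+\tau)\lesssim (t+\tau)^{-3/2}$, and the decay of $\mathbb K$ in $\tau$ handles $\tau\to\infty$) and obeys the stated scaling bound; once this lemma is in hand, the rest is a routine repetition of the Proposition 3.1 argument with the bookkeeping adjusted by $d$. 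A secondary technical point is that $(-\Delta)^{-d/2}$ applied to a general $BMO^{-1}$ distribution need not make sense pointwise, but it does after the heat flow has acted for positive time (which is all we use, since $Q(y_0,r)$ has $t>0$), and on the inner/outer decomposition one works with genuine functions, so this causes no real difficulty.
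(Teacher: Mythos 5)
Your overall strategy coincides with the paper's: both rest on a pointwise bound for the kernel $\mathcal{G}_d$ of $(-\Delta)^{-d/2}e^{s\Delta}$, followed by the Koch--Tataru averaging $v(t)=c\,t^{-1}\int_{t/8}^{3t/8}(-\Delta)^{-d/2}S(t-\tau)\bigl(S(\tau)u_0\bigr)\,d\tau$ and Cauchy--Schwarz against that kernel. Two differences are cosmetic: the paper derives its kernel bound by completing the square in the Fourier integral (see \eqref{3.12}) rather than by subordination, and it does not need your inner/outer decomposition for the Carleson part --- since $d>0$, the pointwise bound $\|v(t)\|_{L^\infty}\lesssim t^{(d-1)/2}\|u_0\|_{BMO^{-1}}$ integrates directly in time, $\int_0^{r^2}t^{d-1}\,dt\sim r^{2d}$, to give the Carleson estimate, so your splitting is extra work (and cutting off a $BMO^{-1}$ distribution is itself not a harmless operation).

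The genuine gap is in the kernel bound, which you correctly single out as the crux. The claimed estimate $|\mathcal{G}_d(x,t)|\lesssim t^{d/2}(\sqrt t+|x|)^{-3}$ is false: the symbol $|\xi|^{-d}e^{-t|\xi|^2}$ is not smooth at $\xi=0$, so the kernel inherits the tail of the Riesz potential. The subordination integral, computed honestly, gives $\int_0^\infty \tau^{d/2-1}\mathbb{K}(x,t+\tau)\,d\tau \sim (\sqrt t+|x|)^{-(3-d)}$ for $|x|\gg\sqrt t$ (already visible from $\mathcal{F}^{-1}(|\xi|^{-d})=c_d|x|^{d-3}$), with no extra factor $t^{d/2}$; also, convergence at $\tau=0$ needs $d>0$, not $d<1$. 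This is fatal to the next step: Cauchy--Schwarz in $\tilde y$ requires $\|\mathcal{G}_d(\cdot,s)\|_{L^1}\lesssim s^{d/2}$, but $(\sqrt s+|z|)^{-(3-d)}$ is not integrable on $\mathbb{R}^3$ --- indeed even your claimed profile $(\sqrt s+|z|)^{-3}$ diverges logarithmically --- and no annular summation of $|e^{\tau\Delta}u_0|$ against a nonnegative, non-integrable kernel can close ($\sum_q (1+q)^{2}(1+q)^{d-3}=\infty$). The repair must use cancellation rather than size: write $u_0=\nabla\cdot g$ with $g\in BMO$ and move the gradient onto the kernel; the symbol $\xi|\xi|^{-d}e^{-s|\xi|^2}$ yields a kernel $\lesssim(\sqrt s+|x|)^{-(4-d)}$, integrable precisely because $d<1$, with $L^1$ norm $\sim s^{(d-1)/2}$. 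To be fair, the paper's own \eqref{3.12} asserts a Gaussian bound for $K_1$ by shifting the contour, which is equally unjustified for the branch-cut symbol $|\xi|^{-1/2}$; so your proposal reproduces the paper's weak point rather than repairing it, but as written the step would fail.
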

\begin{proof}
For the simplicity of exposition, we take $d=\frac{1}{2}$, the general case is the same.
Similar to the proof of our previous Proposition, we can get
\begin{eqnarray}\label{3.9}
&&t^{\frac{ 1}{4} }\| (-\Delta)^{-1/4}u\|_{L^\infty(\mathbb{R}^3)}\\\nonumber
&&\lesssim     \sup\limits_{y\in \mathbb{R}^3} \big(  \frac1{t^2}\int_{0}^{t}\int_{B(y,\sqrt{t})}
((-\Delta)^{-1/4}u(\tau, \tilde{y}))^2d\tilde{y}d\tau\big)^{1/2}.
\end{eqnarray}
Thus, it remains to prove
\begin{equation}\label{3.10}
 \sup\limits_{y_0\in  \mathbb{R}^3}    \big(r^{-4}\int_{Q(y_0,r)} ((-\Delta)^{-1/4}u)^2dydt\big)^{1/2}
\lesssim \|u_0\|_{BMO^{-1 }(\mathbb{R}^3)}.
\end{equation}

By definition, we have
\begin{equation}\label{3.11}
\| (-\Delta)^{-1/4}u\|_{L^\infty(\mathbb{R}^3)}
= 4 r^{-2}\|\int_{\frac{r^2}{8}}^{\frac{3r^2}8} (-\Delta)^{-1/4}S(t-\tau)(S(\tau)u_0)d\tau\|_{L^\infty(\mathbb{R}^3)}.
\end{equation}

Denote $K_1(x)$ be the kernel of the operator $(-\Delta)^{-1/4}S(t-\tau)$, then we have
\begin{eqnarray}\label{3.12}
|K_1(x)|&=&\bigg|\mathcal{F}^{-1}\bigg(\mathcal{F}\big((-\Delta)^{-1/4}S(t-\tau)\big)\bigg)\bigg|\\\nonumber
&=&
 \bigg|\mathcal{F}^{-1}  \big(|\xi|^{-\frac12}e^{-\xi^2t}\big) \bigg|=\bigg|\int_{\mathbb{R}^3}e^{ix\cdot\xi}|\xi|^{-\frac12}e^{-\xi^2t}d\xi \bigg|
 \\\nonumber
&=&
 e^{-\frac{x^2}{4t}}\bigg|\int_{\mathbb{R}^3}e^{-(\sqrt{t}\xi-i\frac{x}{2t})^2}|\xi|^{-\frac12} d\xi \bigg|
\\\nonumber
&\lesssim & \frac{t^{1/4}}{\sqrt{t}^3}  e^{-\frac{x^2}{4t}}\bigg|\int_{\mathbb{S}^2}\int_0^\infty e^{-(\sqrt{t}\xi-i\frac{x}{2t})^2} |\sqrt{t}\xi|^{\frac32} d(\sqrt{t}|\xi|) \bigg|
 \lesssim   \frac{t^{1/4}}{\sqrt{t}^3}  e^{-\frac{x^2}{4t}}
.
\end{eqnarray}

Then by using \eqref{3.12} and similar to \eqref{3.7},  we have
\begin{eqnarray}\label{3.13}
&& | (-\Delta)^{-1/4}u |
=  4 r^{-2} |\int_{\frac{r^2}{8}}^{\frac{3r^2}8} (-\Delta)^{-1/4}S(t-\tau)(S(\tau)u_0)d\tau |
\\\nonumber
&&\lesssim  r^{-2}\|\int_{\frac{r^2}{8}}^{\frac{3r^2}8}\big(\int_{\mathbb{R}^3}K_1(x-y) d\tilde{y}\big)^{1/2} \big(\int_{\mathbb{R}^3}K_1(x-y)(e^{\tau\Delta}u_0)^2d\tilde{y}\big)^{1/2}   d\tau\|_{L^\infty(\mathbb{R}^3)}
     \\\nonumber
&&\lesssim r^{-1+\frac12}\|\big(\int_{\frac{r^2}{8}}^{\frac{3r^2}8}\int_{\mathbb{R}^3} \frac1{\sqrt{t-\tau}^3} e^{-\frac{(y-\tilde{y})^2}{4(t-\tau)}}(e^{\tau\Delta}u_0)^2d\tilde{y}d\tau\big)^{1/2}\|_{L^\infty(\mathbb{R}^3)}
\\\nonumber
&&\lesssim r^{-\frac12}\|u_0\|_{BMO^{-1 }(\mathbb{R}^3)}.
\end{eqnarray}
From \eqref{3.13} and noting \eqref{3.10},  our proposition follows.
\end{proof}

\begin{corr}under the assumption that $u_0$ is a periodic function and  $\bbint_{\mathbb{T}^3}u_0(y) dy=0$, then for $0<d<1$, we have
\begin{equation}\label{3.14}
\|(-\Delta)^{-(\frac{2-d}{2})}\nabla\times u_0\|_{BMO^{-1}}\lesssim \|u_0\|_{BMO^{-1}}  .
\end{equation}
\begin{proof}
Without loss of generality, we take $d=\frac{1}{2}$, the general case is the same.

Denote the matrix function $K_2(x)$ be the kernel of the operator $ e^{t\Delta}(-\Delta)^{-3/4}\nabla\times$. We need to estimate the  bound of ${(K_2)}_{ij}(x), i,j=1,\cdots,3.$

Firstly, we have get the matrix Fourier multiplier of the operator $e^{t\Delta}(-\Delta)^{-3/4}\nabla\times$ as
\begin{equation}\label{3.15}
e^{-\xi^2t}|\xi|^{-\frac32}\bigg(\begin{array}{ccc}
  0& -i\xi_3 &i\xi_2 \\
  i\xi_2 & 0 & -i\xi_1 \\
  -i\xi_2 & i\xi_1 &0
\end{array}
 \bigg)\triangleq e^{-\xi^2t}|\xi|^{-\frac32}A(\xi).
\end{equation}

Then for $i,j=1,\cdots, 3$, we have
\begin{eqnarray}\label{3.16}
&&|{(K_2)}_{ij}(x)|=\big|\mathcal{F}^{-1}\big(e^{-\xi^2t}|\xi|^{-\frac32}A_{ij}(\xi)\big) \big| =\bigg|\int_{\mathbb{R}^3}e^{ix\cdot\xi}e^{-\xi^2t}|\xi|^{-\frac32}A_{ij}(\xi)d\xi \bigg|
 \\\nonumber
&&\lesssim \frac{t^{1/4}}{\sqrt{t}^3}  e^{-\frac{x^2}{4t}}\bigg|\int_{\mathbb{S}^2}\int_0^\infty e^{-(\sqrt{t}\xi-i\frac{x}{2t})^2} |\sqrt{t}\xi|^{\frac12}A(\sqrt{t}\xi) d(\sqrt{t}|\xi|) \bigg|
 \lesssim   \frac{t^{1/4}}{\sqrt{t}^3}  e^{-\frac{x^2}{4t}}
.
\end{eqnarray}

Denote the matrix function $K_3(x)$ be the kernel of the operator $\nabla(-\Delta)^{-3/4}\nabla\times e^{t\Delta}$, for $i,j=1,\cdots, 3$, we have
\begin{eqnarray}\label{3.17}
&&|(K_3)(x)|=\big|\mathcal{F}^{-1}\big(e^{-\xi^2t}|\xi|^{-\frac12}B(\xi)\big) \big| =\bigg|\int_{\mathbb{R}^3}e^{ix\cdot\xi}e^{-\xi^2t}|\xi|^{-\frac12}B(\xi)d\xi \bigg|
 \\\nonumber
&&\lesssim \frac{t^{-1/4}}{\sqrt{t}^3}  e^{-\frac{x^2}{4t}}\bigg|\int_{\mathbb{S}^2}\int_0^\infty e^{-(\sqrt{t}\xi-i\frac{x}{2t})^2} |\sqrt{t}\xi|^{\frac32}B(\sqrt{t}\xi) d(\sqrt{t}|\xi|) \bigg|
  \lesssim   \frac{t^{-1/4}}{\sqrt{t}^3}  e^{-\frac{x^2}{4t}},
\end{eqnarray}
where we denote $B(\xi)=\frac{\xi}{|\xi|}\otimes A(\xi)$.

Similarly to \eqref{3.13}, we have
\begin{eqnarray}\label{3.18}
&& |(-\Delta)^{-3/4}\nabla\times e^{t\Delta} u_0 |  \\\nonumber
&&=4 r^{-2} |\int_{\frac{r^2}{8}}^{\frac{3r^2}8} (-\Delta)^{-3/4}\nabla\times S(t-\tau)(S(\tau)u_0)d\tau | \\\nonumber
&&\lesssim  r^{-2} |\int_{\frac{r^2}{8}}^{\frac{3r^2}8}\big(\int_{\mathbb{R}^3}K_2(x-y) d\tilde{y}\big)^{1/2} \big(\int_{\mathbb{R}^3}K_2(x-y)(e^{\tau\Delta}u_0)^2d\tilde{y}\big)^{1/2}   d\tau |
     \\\nonumber
&&\lesssim r^{-1+\frac12} |\big(\int_{\frac{r^2}{8}}^{\frac{3r^2}8}\int_{\mathbb{R}^3} \frac1{\sqrt{t-\tau}^3} e^{-\frac{(y-\tilde{y})^2}{4(t-\tau)}}(e^{\tau\Delta}u_0)^2d\tilde{y}d\tau\big)^{1/2} |
\\\nonumber
&&\lesssim r^{-\frac12}\|u_0\|_{BMO^{-1 }(\mathbb{R}^3)}.
\end{eqnarray}
Let $u_0=(\nabla\times)^{-1} w_0$, then we also  have
\begin{eqnarray}\label{3.19}
&& |(-\Delta)^{-3/4}\nabla\times e^{t\Delta} u_0 | \\\nonumber
&&=4 r^{-2} |\int_{\frac{r^2}{8}}^{\frac{3r^2}8} \nabla (-\Delta)^{-3/4}\nabla\times S(t-\tau)(S(\tau)w_0)d\tau | \\\nonumber
&&\lesssim  r^{-2-\frac14} |\int_{\frac{r^2}{8}}^{\frac{3r^2}8} \big(\int_{\mathbb{R}^3}K_3(x-y)(e^{\tau\Delta}w_0)^2d\tilde{y}\big)^{1/2}   d\tau |
     \\\nonumber
&&\lesssim r^{-1-\frac12} |\big(\int_{\frac{r^2}{8}}^{\frac{3r^2}8}\int_{\mathbb{R}^3} \frac1{\sqrt{t-\tau}^3} e^{-\frac{(y-\tilde{y})^2}{4(t-\tau)}}( e^{\tau\Delta}w_0)^2d\tilde{y}d\tau\big)^{1/2} |
\\\nonumber
&&\lesssim r^{-\frac32}\|w_0\|_{BMO^{-1}(\mathbb{R}^3)}.
\end{eqnarray}
Take $\inf$ with respect to $w_0$, we get
\begin{equation}\label{3.20}
 |(-\Delta)^{-3/4}\nabla\times e^{t\Delta} u_0 | \lesssim r^{-\frac32}\|u_0\|_{BMO^{-2}(\mathbb{R}^3)}.
\end{equation}

Combing \eqref{3.18} and \eqref{3.20}, we have
\begin{eqnarray}\label{3.21}
&&|(-\Delta)^{-3/4}\nabla\times e^{t\Delta} u_0|\\\nonumber
 &&\lesssim r^{-1}\|u_0\|_{BMO^{-1 }(\mathbb{R}^3)}^{\frac12}\|u_0\|_{BMO^{-2 }(\mathbb{R}^3)}^{\frac12} \lesssim r^{-1}\|u_0\|_{BMO^{-1 }(\mathbb{R}^3)}.
\end{eqnarray}
The last inequality given above was from  lemma 2.3.

By definition
\begin{equation}\label{3.21}
\|(-\Delta)^{-3/4}\nabla\times u_0\|_{BMO^{-1}}=
\sup\limits_{y_0\in  \mathbb{R}^3}    \big(r^{-3}\int_{Q(y_0,r)} ((-\Delta)^{-3/4}\nabla\times e^{t\Delta} u_0)^2dydt\big)^{1/2},
\end{equation}
then the conclusion follows from  \eqref{3.20}.
\end{proof}
\end{corr}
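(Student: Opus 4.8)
The statement to be proved is the corollary asserting that, for a periodic mean-zero $u_0$ and any $0<d<1$,
\[
\|(-\Delta)^{-(\frac{2-d}{2})}\nabla\times u_0\|_{BMO^{-1}}\lesssim \|u_0\|_{BMO^{-1}}.
\]
My plan is to fix $d=\tfrac12$ (the general case being identical with obvious changes in exponents), and to reduce everything to pointwise kernel bounds for the linear heat flow, exactly in the spirit of Proposition \ref{proposition 3.108}. The key point is that, by the definition of the $BMO^{-1}$ semi-norm, the quantity to estimate is
\[
\sup_{y_0\in\mathbb{R}^3}\Big(r^{-3}\int_{Q(y_0,r)} |(-\Delta)^{-3/4}\nabla\times e^{t\Delta}u_0|^2\,dy\,dt\Big)^{1/2},
\]
so it suffices to produce a good pointwise bound on $(-\Delta)^{-3/4}\nabla\times e^{t\Delta}u_0$ on each space-time ball $Q(y_0,r)$.

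\textbf{Step 1 (kernel estimates).} First I would compute the matrix Fourier multipliers of the two operators I need: $e^{t\Delta}(-\Delta)^{-3/4}\nabla\times$, with symbol $e^{-\xi^2 t}|\xi|^{-3/2}A(\xi)$ where $A(\xi)$ is the antisymmetric matrix with entries $\pm i\xi_k$, and $\nabla(-\Delta)^{-3/4}\nabla\times e^{t\Delta}$, with symbol $e^{-\xi^2 t}|\xi|^{-1/2}B(\xi)$ where $B(\xi)=\tfrac{\xi}{|\xi|}\otimes A(\xi)$. For both, I would complete the square in the exponential, $e^{ix\cdot\xi}e^{-\xi^2 t}=e^{-x^2/4t}e^{-(\sqrt t\xi-i x/2t)^2}$, pull out $e^{-x^2/4t}$, rescale $\xi\mapsto\xi/\sqrt t$, and note that the remaining integral over $\mathbb{S}^2\times(0,\infty)$ in the rescaled variable is bounded uniformly (the homogeneity degrees $|\xi|^{-3/2}|A(\xi)|\sim|\xi|^{-1/2}$ and $|\xi|^{-1/2}|B(\xi)|\sim|\xi|^{1/2}$ are integrable near the origin). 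This yields the kernel bounds $|K_2(x)|\lesssim t^{1/4}t^{-3/2}e^{-x^2/4t}$ and $|K_3(x)|\lesssim t^{-1/4}t^{-3/2}e^{-x^2/4t}$.

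\textbf{Step 2 (two competing bounds via Cauchy--Schwarz).} With $t\sim r^2$ fixed on $Q(y_0,r)$, I would write $(-\Delta)^{-3/4}\nabla\times e^{t\Delta}u_0 = 4r^{-2}\int_{r^2/8}^{3r^2/8}(-\Delta)^{-3/4}\nabla\times S(t-\tau)\,S(\tau)u_0\,d\tau$, apply Cauchy--Schwarz in $\tilde y$ against the (essentially Gaussian, mass $\sim t^{1/4}$) kernel $K_2$, and reduce to the now-familiar local integral $\big(\int\int \frac{1}{\sqrt{t-\tau}^3}e^{-(y-\tilde y)^2/4(t-\tau)}(e^{\tau\Delta}u_0)^2\big)^{1/2}$ controlled by $\|u_0\|_{BMO^{-1}}$; this gives the first bound $|(-\Delta)^{-3/4}\nabla\times e^{t\Delta}u_0|\lesssim r^{-1/2}\|u_0\|_{BMO^{-1}}$. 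Then, writing $u_0=(\nabla\times)^{-1}w_0$ and using the kernel $K_3$ instead, the same computation (with the extra gradient accounted for by the $t^{-1/4}$ in $K_3$) gives the complementary bound $|(-\Delta)^{-3/4}\nabla\times e^{t\Delta}u_0|\lesssim r^{-3/2}\|w_0\|_{BMO^{-1}}$; taking the infimum over all $w_0$ with $\nabla\times w_0 = \nabla\times u_0$ turns this into $\lesssim r^{-3/2}\|u_0\|_{BMO^{-2}}$. Multiplying the two bounds and taking square roots, $|(-\Delta)^{-3/4}\nabla\times e^{t\Delta}u_0|\lesssim r^{-1}\|u_0\|_{BMO^{-1}}^{1/2}\|u_0\|_{BMO^{-2}}^{1/2}$.

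\textbf{Step 3 (conclusion).} Finally I would invoke Lemma \ref{a}'s neighbour, Lemma 2.3, which says $\|u_0\|_{BMO^{-2}}\lesssim\|u_0\|_{BMO^{-1}}$ for periodic mean-zero data, to get the clean pointwise bound $|(-\Delta)^{-3/4}\nabla\times e^{t\Delta}u_0|\lesssim r^{-1}\|u_0\|_{BMO^{-1}}$ on $Q(y_0,r)$. Plugging this into $\sup_{y_0}\big(r^{-3}\int_{Q(y_0,r)}|\cdot|^2\big)^{1/2}$ produces a factor $r^{-3}\cdot r^2\cdot|B(y_0,r)|\cdot r^{-2}\sim 1$ after the volume count, so the $BMO^{-1}$ semi-norm is $\lesssim\|u_0\|_{BMO^{-1}}$, as claimed.

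\textbf{Main obstacle.} The routine part is the Gaussian bookkeeping; the real content is the \emph{interpolation trick} in Step 2 --- one needs two genuinely different kernel representations (via $K_2$ and via $K_3$ after inverting the curl), each giving a bound with the "wrong" power of $r$ on its own, whose geometric mean lands on the correct scaling $r^{-1}$. The subtlety is verifying that the $|\xi|^{-1/2}$ (resp.\ $|\xi|^{1/2}$) singularities at the origin are harmless: this is where periodicity and the mean-zero hypothesis \eqref{1.4} enter, since they guarantee the spectrum of $u_0$ stays away from $\xi=0$ and legitimize both the operator $(\nabla\times)^{-1}$ and the low-frequency convergence of the kernel integrals. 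I would make sure that dependence is spelled out rather than swept into the $\lesssim$.
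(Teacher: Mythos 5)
Your proposal follows essentially the same route as the paper's own proof: the same two kernel bounds for $e^{t\Delta}(-\Delta)^{-3/4}\nabla\times$ and $\nabla(-\Delta)^{-3/4}\nabla\times e^{t\Delta}$, the same pair of competing Cauchy--Schwarz estimates ($r^{-1/2}\|u_0\|_{BMO^{-1}}$ and $r^{-3/2}\|u_0\|_{BMO^{-2}}$ via $u_0=(\nabla\times)^{-1}w_0$), the same geometric-mean interpolation, and the same final appeal to Lemma 2.3. The argument is correct as the paper intends it; your closing remark about the low-frequency justification is a fair observation about a point the paper leaves implicit, but it does not change the approach.
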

%%%%%%%%%%%%%%%%%%%%%%%%%%%%%%%%%%%%%%%%%%%%%%%%%%%%%%%%%%%%%%%%%%%%%%%%%%%%%%%%%%%%%%%
The following proposition appear in Koch and Taturu, for the convenience of the reader, we include a slightly different  proof here.

\begin{proposition}\label{prop3.3}
Let  $G_1$ be a tensor, and  $V_1$ be the solution of the following system,
\begin{equation}\label{3.22}
\begin{cases}
V_{1 t}-\triangle V_1+\nabla P_1=\nabla \cdot   G_1,\\
\nabla\cdot V_1=0,\\
t=0: V_1=0,
\end{cases}
\end{equation}
then we have
\begin{equation}\label{3.23}
\|V_1\|_{X_{T^*}}\lesssim \|G_1\|_{Y_{T^*}}.
\end{equation}
\end{proposition}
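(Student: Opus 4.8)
The plan is to work from the Duhamel (mild) representation
$$V_1(t)=\int_0^t e^{(t-s)\Delta}\,\mathbb{P}\,\nabla\cdot G_1(s)\,ds,$$
and to control the two ingredients of $\|V_1\|_{\mathbb{X}_{T^*}}$ separately: the pointwise part $\sup_t t^{1/2}\|V_1(t)\|_{L^\infty}$ and the Carleson part $\sup_{y_0,\,r\le\sqrt{T^*}}\bigl(r^{-3}\int_{Q(y_0,r)}|V_1|^2\bigr)^{1/2}$. By Lemma 2.1 with $k=0$ and $n=3$, the kernel of $e^{\sigma\Delta}\mathbb{P}\nabla\cdot$ is dominated pointwise by $C(\sqrt\sigma+|x|)^{-4}$, so it is enough to bound the scalar positive majorant
$$W(t,x)=\int_0^t\!\!\int_{\mathbb{R}^3}\frac{|G_1(s,y)|}{(\sqrt{t-s}+|x-y|)^4}\,dy\,ds,$$
using only the two facts encoded in $N:=\|G_1\|_{\mathbb{Y}_{T^*}}$, namely $\|G_1(s)\|_{L^\infty}\le N/s$ and $\int_{Q(z,\rho)}|G_1|\le N\rho^3$ for every $z$ and every $\rho\le\sqrt{T^*}$.

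For the pointwise part I would split $\int_0^t=\int_0^{t/2}+\int_{t/2}^t$. On $(t/2,t)$ one uses $\|G_1(s)\|_{L^\infty}\le 2N/t$ together with $\int_{\mathbb{R}^3}(\sqrt{t-s}+|x-y|)^{-4}\,dy\simeq(t-s)^{-1/2}$, giving a contribution $\lesssim N/\sqrt t$. On $(0,t/2)$, where $\sqrt{t-s}\simeq\sqrt t$, decompose the $y$-integral into $\{|x-y|\le\sqrt t\}$ and the dyadic annuli $\{|x-y|\simeq 2^j\sqrt t\}$, $j\ge 0$; on the $j$-th piece the kernel is $\lesssim 2^{-4j}t^{-2}$, while covering $B(x,2^{j+1}\sqrt t)\times(0,t/2)$ by parabolic balls of radius $\min(2^{j+1}\sqrt t,\sqrt{T^*})$ and applying the Carleson bound gives $\int_0^{t/2}\int_{|x-y|\simeq 2^j\sqrt t}|G_1|\lesssim N\,2^{3j}t^{3/2}$; summing over $j$ produces again $\lesssim N/\sqrt t$. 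Hence $\sup_t t^{1/2}\|V_1(t)\|_{L^\infty}\lesssim N$.

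For the Carleson part, fix $y_0$ and $0<r\le\sqrt{T^*}$ and split $G_1=G_1\mathbf{1}_{B(y_0,2r)}+G_1\mathbf{1}_{\{|y-y_0|>2r\}}=:G_1^{\mathrm{in}}+G_1^{\mathrm{out}}$ (only times $s<t\le r^2$ contribute, so the time variable is automatically confined). For $(x,t)\in Q(y_0,r)$ and $|y-y_0|>2r$ one has $|x-y|\simeq|y-y_0|\ge r\ge\sqrt{t-s}$, so the $G_1^{\mathrm{out}}$-part of $W$ is $\lesssim\sum_{j\ge1}(2^jr)^{-4}\int_{Q(y_0,2^{j+1}r)}|G_1|\lesssim N/r$ by the Carleson bound (with a harmless covering when $2^{j+1}r>\sqrt{T^*}$); since $|Q(y_0,r)|\simeq r^5$ this already gives $\int_{Q(y_0,r)}|V_1^{\mathrm{out}}|^2\lesssim N^2 r^3$. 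It remains to prove $\int_{Q(y_0,r)}|V_1^{\mathrm{in}}|^2\lesssim N^2 r^3$, where $V_1^{\mathrm{in}}$ solves the Stokes system with forcing $\nabla\cdot G_1^{\mathrm{in}}$ and $G_1^{\mathrm{in}}$ is supported in $B(y_0,2r)\times(0,r^2]$. Here I would decompose dyadically in time, $G_1^{\mathrm{in}}=\sum_{k\ge0}G_1^{\mathrm{in}}\mathbf{1}_{I_k}$ with $I_k=(2^{-k-1}r^2,2^{-k}r^2]$, and for each block decompose further in space at the scale $\rho_k=2^{-k/2}r$ matched to the block's parabolic scale: a "near-diagonal" part (source within distance $\lesssim\rho_k$) is handled by the energy inequality $\|V\|_{L^\infty_tL^2_x}^2+\|\nabla V\|_{L^2_{t,x}}^2\lesssim\|(\text{that part})\|_{L^2_{t,x}}^2$, whose right-hand side is summable in $k$ precisely because $\|G_1^{\mathrm{in}}(s)\|_{L^\infty}\le N/s$ combines with the Carleson bound $\int_0^{\rho_k^2}\int_B|G_1^{\mathrm{in}}|\lesssim N\rho_k^3$ at the matching scale; the complementary "spatial tails" are controlled by the surplus decay $(\sqrt{t-s}+|x-y|)^{-4}$ of the kernel, i.e. by the cancellation carried by $\mathbb{P}\nabla\cdot$, which beats the loss. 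Summing over $k$ and then over $(y_0,r)$ closes the estimate.

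The main obstacle is exactly this local bound $\int_{Q(y_0,r)}|V_1^{\mathrm{in}}|^2\lesssim N^2 r^3$: size bounds on $G_1$ alone are insufficient, since a source of size $\sim N/s$ filling all of $B(y_0,2r)$ during a single dyadic time window is admissible and, estimated by a crude $L^2$ bound, would produce a spurious logarithmic (indeed power) loss. What rescues it is that such a source generates a dipole-type, essentially cancelling field — which is precisely what the extra spatial decay in Lemma 2.1 records — so that organizing the dyadic-in-time/dyadic-in-space decomposition so as to exploit the Carleson bound at every scale and keep every piece summable, uniformly in $(y_0,r)$ and in $T^*$, is where the real work lies.
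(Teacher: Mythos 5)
Your treatment of the pointwise part $\sup_t t^{1/2}\|V_1(t)\|_{L^\infty}$ and of the far (``out'') contribution to the Carleson norm coincides with the paper's argument (kernel bound of Lemma 2.1, split at $\tau=t/2$, dyadic annuli summed against the Carleson bound, and $\|G_1(s)\|_{L^\infty}\lesssim \|G_1\|_{Y_{T^*}}/s$ near the endpoint). The genuine gap is exactly where you say ``the real work lies'': the local bound $\int_{Q(y_0,r)}|V_1^{\mathrm{in}}|^2\lesssim N^2r^3$ is never proved, and the scheme you sketch for it does not close. With $I_k=(2^{-k-1}r^2,2^{-k}r^2]$ and $\rho_k=2^{-k/2}r$, the $L^2$ norm of the $k$-th block of $G_1^{\mathrm{in}}$ over the whole ball is $\lesssim\sup_{I_k}\|G_1\|_{L^\infty}\cdot\int_{Q(y_0,2r)}|G_1|\lesssim(2^kN/r^2)(Nr^3)=2^kN^2r$, so even a single block overshoots the target and the sum over $k$ diverges. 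Restricting to one $\rho_k$-ball does give the summable $2^{-k/2}N^2r$, but the energy inequality is global in space, so you must recombine the $\sim 2^{3k/2}$ pieces, and the far fields/cross terms do not cooperate: each piece contributes $\lesssim d^{-4}N\rho_k^3$ at distance $d$, summing to $\lesssim N/\rho_k$ in $L^\infty$ and hence to $\lesssim 2^kN^2r^3$ in $\int_{Q(y_0,r)}|\cdot|^2$ --- again divergent in $k$. Moreover, the decay $(\sqrt{t-s}+|x-y|)^{-4}$ of Lemma 2.1 already accounts for the derivative carried by $\mathbb{P}\nabla\cdot$, so there is no further cancellation available to ``beat the loss.''

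The missing idea --- the heart of the paper's proof and of Koch--Tataru --- is an $L^1\times L^\infty$ pairing inside the energy identity, in place of any $L^2\to L^2$ estimate on the forcing. After inserting a cutoff $\chi$ adapted to $B(y_0,r)$ and dropping the $L^2$-bounded projector $\mathbb{P}$, one writes the near part of $V_1$ as one spatial derivative of the solution $W$ of the heat equation with the \emph{undifferentiated} forcing, $W_t-\Delta W=\chi G_1$, $W|_{t=0}=0$, and tests against $W$ itself:
\begin{equation*}
\int_0^{r^2}\!\!\int |\nabla W|^2\,dy\,d\tau\;\le\;\int_0^{r^2}\!\!\int |\chi G_1|\,|W|\,dy\,d\tau\;\le\;\|W\|_{L^\infty}\int_0^{r^2}\!\!\int |\chi G_1|\,dy\,d\tau .
\end{equation*}
The two factors are then estimated separately by the two components of the $\mathbb{Y}_{T^*}$ norm: $\int\!\!\int|\chi G_1|\lesssim r^3\|G_1\|_{Y_{T^*}}$ directly from the Carleson condition, and $\|W\|_{L^\infty}\lesssim\|G_1\|_{Y_{T^*}}$ by splitting the Duhamel integral at $s=\tau/2$ exactly as in your pointwise estimate (this is \eqref{3.30}--\eqref{3.37} in the paper). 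This bypasses any $L^2$ control of $G_1$ --- which, as you correctly observe, is not available --- and requires no dyadic decomposition of the source at all.
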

\begin{proof}
 First, we rewrite the system \eqref{3.22} as an integral equation
 \begin{equation}\label{3.24}
V_1(t)= \int_0^t S(t-\tau)\mathbb{P}\nabla\cdot  G_1 d\tau.
\end{equation}
When $0\leq \tau\leq t/2$, by using Lemma 2.1, we get:
\begin{eqnarray}\label{3.25}
 &&
t^{\frac{1}{2} }
\| \int_0^{t/2} S(t-\tau)\mathbb{P}\nabla \cdot G_1 d\tau\|_{L^\infty(\mathbb{R}^n)}\\ \nonumber
&\lesssim&
t^{\frac{1}{2}}
\|\int_0^{t/2}\int_{\mathbb{R}^n}\frac{1}{(\sqrt{t-\tau}+|y-\tilde{y}|)^{n+1}}G_1d\tilde{y}d\tau\|_{L^\infty(\mathbb{R}^n)}\\ \nonumber
%&\lesssim&
%t^{\frac12}
%\|\int_0^{t/2}\int_{\mathbb{R}^n}\frac{1}{(\sqrt{t-\tau}+|y-\tilde{y}|)^{n+1}}(|\tilde{u}|^2+|\nabla\tilde{d}|^2)d\tilde{y}d\tau\|_{L^\infty(\mathbb{R}^n)}\\\nonumber
%&\lesssim&
%t^{\frac12}
%\|\int_0^{t/2}\int_{\mathbb{R}^n}\frac{1}{\sqrt{t-\tau}^{n+1}(1+\frac{|y-\tilde{y}|}{\sqrt{t-\tau}})^{n+1}}(|\tilde{u}|^2+|\nabla\tilde{d}|^2)d\tilde{y}d\tau\|_{L^\infty(\mathbb{R}^n)}\\\nonumber
&\lesssim&
\|\frac{1}{\sqrt{t}^{n}}\int_0^{t/2}\int_{\mathbb{R}^n}\frac{1}{(1+\frac{|y-\tilde{y}|}{\sqrt{t-\tau}})^{n+1}}|G_1|d\tilde{y}d\tau\|_{L^\infty(\mathbb{R}^n)}\\\nonumber
&\lesssim&
\|\frac{1}{\sqrt{t}^{n}}\sum\limits_{q=0}^{+\infty}\int_0^{t/2}\int_{q\leq\frac{|y-\tilde{y}|}{\sqrt{t-\tau}}\leq q+1}\frac{|G_1|}{(1+q)^{n+1}}d\tilde{y}d\tau\|_{L^\infty(\mathbb{R}^n)}\\\nonumber
&\lesssim&
\|\sum\limits_{q=0}^{+\infty}\frac{q^{n-1}}{(1+q)^{n+1}}\frac{1}{\sqrt{t}^{n}}\int_0^{t/2}\int_{B(y,\sqrt{t})}|G_1|d\tilde{y}d\tau\|_{L^\infty(\mathbb{R}^n)}
\lesssim \|G_1\|_{Y_{T^*}}.
\end{eqnarray}
If $\frac{t}{2}\leq \tau\leq t$,  we get
\begin{equation}\label{3.26}
| S(t-\tau)\mathbb{P}\nabla\cdot G_1 |
\lesssim|\int_{\mathbb{R}^{n}}\frac{1}{(\sqrt{t-\tau}+|y-\tilde{y}|)^{n+1}} d\tilde{y}| \| G_1\|_{L^\infty(\mathbb{R}^n)}
%&&\lesssim
%\frac{1}{\sqrt{t-\tau}}\sum\limits_{l=0}^kC_k^l  \|(|\nabla^{k-l} \tilde{u}||\nabla^l\tilde{u}|+|\nabla^{k-l}\nabla \tilde{d}||\nabla^l \nabla \tilde{d}|)\|_{L^\infty(\mathbb{R}^n)}
% \\\nonumber
\lesssim
\frac{\|G_1\|_{L^\infty(\mathbb{R}^n)}}{\sqrt{t-\tau}}.
\end{equation}
Therefore we have
\begin{eqnarray}\label{3.27}
 &&t^{\frac{1}{2}}
\|\int_{t/2}^t S(t-\tau)\mathbb{P}\nabla \cdot G_1d\tau\|_{L^\infty(\mathbb{R}^n)}
\\\nonumber
&&\lesssim
t^{\frac{1}{2}}
\int_{t/2}^t\frac1{\sqrt{t-\tau}}d\tau  \| G_1\|_{L^\infty(\mathbb{R}^n)}\lesssim
t\| G_1\|_{L^\infty(\mathbb{R}^n)}.
\end{eqnarray}

We still need to estimate the term $ \sup\limits_{y_0\in  \mathbb{R}^n, r>0} \frac{1}{r^n}\int^{r^2}_0\int _{B(y_0,r)}|V_1(s,y)|^2dyds$.
For any given $y_0\in \mathbb{R}^n$ and $r>0$, take a smooth cut-off function
\begin{equation}\label{3.28}
\chi(\frac{y}{r})=
\begin{cases}
1,\ |y-y_0|\leq 3r;\\
0,\ |y-y_0|\geq5r.
\end{cases}
\end{equation}
Then, it is sufficient to estimate the following term
\begin{multline}\label{3.29}
I=  \sup\limits_{y_0\in\mathbb{R}^n, r>0 }   \bigg(r^{-n}\int_{Q(y_0,r)} \big(\nabla \int_0^tS(t-\tau)\mathbb{P}
\chi G_1 d\tau\big)^2dy dt\bigg)^{\frac12}\\
+  \sup\limits_{y_0\in\mathbb{R}^n, r>0 }  \bigg(r^{-n}\int_{Q(y_0,r)} (\nabla  \int_0^t S(t-\tau)\mathbb{P} (1-\chi) G_1
d\tau\big)^2dy dt\bigg)^{\frac12}
\triangleq I_{1}+I_{2}.
\end{multline}

To deal with $I_{1}$,
we will drop the projector $\mathbb{P}$, which is a bounded operator in $L^2$ and
which commutes with $S(t-\tau)$, $\nabla  $ and the integral  about $t$.
We set up a heat function
\begin{equation}\label{3.30}
W_t-\Delta W = \chi G_1, \qquad W|_{t=0}=0.
\end{equation}
Then $I_{1}= \sup\limits_{y_0\in\mathbb{R}^n, r>0}   \bigg(r^{-n}\| \nabla W\|_{L^2(Q(y_0,r))}^2\bigg)^{\frac12} $. We get
\begin{eqnarray}\label{3.31}
&&\| \nabla W\|_{L^2(Q(y_0,r))}^2\leqslant\int_0^{r^2}\int_{\mathbb{R}^n} (\nabla W)^{2}dyd\tau\\\nonumber
&&\lesssim \int_0^{r^2}\int_{\mathbb{R}^n}| \chi G_1 \ W|dy d\tau.
\end{eqnarray}
For the term $\int_0^{r^2}\int_{\mathbb{R}^n}| \chi G_1 W|dy dt$,  recalling that $ \chi=\chi(y/r)$ and $t\leq r^2$, we have
\begin{equation}\label{3.32}
 \int_0^{r^2}\int_{\mathbb{R}^n}| \chi G_1 W|dy d\tau
\lesssim
 \| W\|_{L^\infty_{Q(y_0,5r)}}
  \int_0^{r^2}\int_{\mathbb{R}^n}| \chi G_1 |dy d\tau,
  \end{equation}
  where
\begin{equation}\label{3.33}
  \| W\|_{L^\infty_{Q(y_0,5r)}}
=
\| \int_0^{\tau/2}   S(\tau-s) \chi G_1 ds
+   \int_{\tau/2}^\tau S(\tau-s) \chi G_1 ds\|_{L^\infty_{Q(y_0,5r)}}.
\end{equation}

We shall study the above inequality separately.

By Lemma 2.2, we have
\begin{eqnarray}\label{3.34}
&&\| \int_0^{\tau/2} S(\tau-s) G_1 ds \|_{L^\infty_{Q(y_0,5r)}}             \\ \nonumber
 \\\nonumber
 &&\lesssim\|\frac{1}{\sqrt{\tau}^{n}}\int_0^{\tau/2}\sum\limits_{q=0}^\infty\int_{q\leq\frac{|y-\tilde{y}|}{\sqrt{\tau}}\leq q+1}e^{-q^2} G_1 d\tilde{y}ds\|_{L^\infty_{Q(y_0,5r)}}
  \\\nonumber
% &&\lesssim\|\sum\limits_{m=0}^\infty  m^{n-1}J^l(m)e^{-m^2}\frac{1}{\sqrt{\tau}^{n}}\int_0^{\tau/2}\int_{B(y,\sqrt{t})}|\digamma | d\tilde{y}ds\|_{L^\infty_{Q(y_0,5r)}}
%   \\\nonumber
&& \lesssim\sup\limits_{y\in \mathbb{R}^n, 0<\tau \leq   {T^*}}\frac{1}{\sqrt{\tau}^{n}}\int_0^{\tau}\int_{B(y,\sqrt{\tau})}|G_1| d\tilde{y}ds
\lesssim
\|G_1\|_{Y_{T^*}}.
\end{eqnarray}
Recalling the definition of the cut-off function $\chi$, we have
\begin{eqnarray}\label{3.35}
&&  \| \int_{\tau/2}^\tau\int_{\mathbb{R}^n}\frac{1}{\sqrt{\tau-s}^n}e^{-\frac{(y-\tilde{y})^2}{4(\tau-s)}}  \chi G_1d\tilde{y}ds\|_{L^\infty_{Q(y_0,5r)}}
 \\\nonumber
%&& \lesssim\|\int_{\tau/2}^\tau\int_{\mathbb{R}^n}\frac{1}{\sqrt{\tau-s}^n}e^{-\frac{(y-\tilde{y})^2}{4(\tau-s)}} \sum\limits_{l_1=0}^ls^{\frac{l_1}{2}}|\nabla^{l_1}\digamma| d\tilde{y}ds\|_{L^\infty_{Q(y_0,5r)}}
% \\\nonumber
&& \lesssim \int_{\tau/2}^\tau\int_{\mathbb{R}^n}\frac{1}{\sqrt{\tau-s}^n}e^{-\frac{(y-\tilde{y})^2}{4(\tau-s)}} s^{-1}d\tilde{y}ds \cdot \tau \|G_1\|_{L^\infty_{Q(y_0,5r)}}
 \\\nonumber
&& \lesssim\|G_1\|_{Y_{T^*}}\int_{\tau/2}^\tau \frac1s ds \lesssim \|G_1\|_{Y_{T^*}}.
\end{eqnarray}
To finish the estimate of $I_{1}$, from \eqref{3.32}, we still need to estimate $  r^{-n}\int_0^{r^2}\int_{\mathbb{R}^n}|\chi G_1 |dy d\tau$.
\begin{equation}\label{3.36}
 r^{-n} \int_0^{r^2}\int_{\mathbb{R}^n}|\chi G_1 |dy d\tau
\lesssim
   r^{-n} \int_0^{r^2}\int_{B(y_0,5r)}|G_1| dy d\tau \lesssim \|G_1\|_{Y_{T^*}}.
\end{equation}

Combining \eqref{3.31}-\eqref{3.36}, we get
\begin{equation}\label{3.37}
I_{1}\lesssim \|G_1\|_{Y_{T^*}}.
\end{equation}

As to the term $I_{2}$,  we have
\begin{equation}\label{3.38}
I_{2}^2\leq \sup_{0<r\leq \sqrt{T^*}} r^2\| \nabla \int_0^tS(t-\tau)\mathbb{P}(1-\chi)G_1  d\tau\|_{L^\infty(Q(y_0,r))}^2.
\end{equation}

When $0\leq \tau\leq t\leq (r/2)^2$, noting the cut-off function $1-\chi$ and Lemma 2.1, we have
\begin{eqnarray}\label{3.39}
&& \sup\limits_{y_0\in \mathbb{R}^n,0<r\leq \sqrt{ {T^*}}}   r^2
\| \nabla \int_0^tS(t-\tau)\mathbb{P}(1-\chi)G_1 d\tau\|_{L^\infty(Q(y_0,r))}^2\\\nonumber
&&\lesssim \sup\limits_{y_0\in \mathbb{R}^n,0<r\leq \sqrt{ {T^*}}}    r^{2}
\|\int_0^t \int_{\mathbb{R}^n}\frac{1-\chi(\frac{\tilde{y}}{r})}{(\sqrt{t-\tau}+|y-\tilde{y}|)^{n+1}}|G_1|d\tilde{y}d\tau\|_{L^\infty(Q(y_0,r))}^2 \\
\nonumber
&&\lesssim  \sup\limits_{y_0\in \mathbb{R}^n,0<r\leq \sqrt{ {T^*}}}     r^{2}
\|\int_0^t \sum\limits_{q=1}^\infty\int_{qr\leq|y-\tilde{y}|\leq (q+1)r}\frac{|G_1|}{(qr)^{n+1}}d\tilde{y}d\tau\|_{L^\infty(Q(y_0,r))}^2 \\
\nonumber
&&\lesssim  \sup\limits_{y_0\in \mathbb{R}^n,0<r\leq \sqrt{ {T^*}} }    \big(
\frac{1}{r^n}\int_0^{r^2} \int_{B(y,r)}| G_1| d\tilde{y}d\tau\big)^2
\lesssim \|G_1\|_{Y_{T^*}}^2.
\end{eqnarray}
For the remaining  part $(r/2)^2<t<r^2$,  we shall divide into two parts to estimate.
%\begin{equation}
%I_{22}^2\lesssim
%\sup\limits_{y_0,r>0}r^{2}
%\|t^{\frac{k}{2}}\nabla^{k+1}\big(\int_0^{t/2}+\int_{t/2}^t\big)S(t-\tau)\mathbb{P}(1-\chi)\digamma d\tau\|_{L^\infty(Q(y_0,r))}^2.
%\end{equation}

 (i): when $(r/2)^2<t<r^2$  and $0\leq \tau\leq t/2$, by Lemma 2.1, we have
\begin{eqnarray}\label{3.40}
 && \sup\limits_{y_0\in \mathbb{R}^n,0<r\leq \sqrt{ {T^*}}}   r^{2}
\|  \nabla\cdot \int_0^{t/2} S(t-\tau)\mathbb{P}(1-\chi)G_1 d\tau\|_{L^\infty(Q(y_0,r))}^2
\\\nonumber
&&\lesssim
\sup\limits_{y_0\in \mathbb{R}^n,0<r\leq \sqrt{ {T^*}}}     r^{2}
\| \int_0^{t/2}\int_{\mathbb{R}^n}\frac{(1-\chi)}{(\sqrt{t-\tau}+|y-\tilde{y}|)^{n+1}}|G_1 (\tau, \tilde{y})| d\tilde{y}d\tau\|_{L^\infty(Q(y_0,r))}^2 \\\nonumber
&&\lesssim
\sup\limits_{y_0\in \mathbb{R}^n,0<r\leq \sqrt{ {T^*}}}
\|\frac{1}{\sqrt{t}^{n}}\int_0^{t/2}\sum\limits_{q=0}^\infty\int_{q\leq\frac{|y-\tilde{y}|}{\sqrt{t}}\leq q+1}\frac{|G_1(\tau, \tilde{y})| }{(1+q)^{n+1}}d\tilde{y}d\tau\|_{L^\infty(Q(y_0,r))}^2  \\\nonumber
&&\lesssim
\sup\limits_{y_0\in \mathbb{R}^n,0<r\leq \sqrt{ {T^*}}}
\|\frac{1}{\sqrt{t}^{n}}\int_0^{t/2}\sum\limits_{q=0}^\infty    \frac{ q^{n-1}}{(1+q)^{n+1}}    \int_{B(y,\sqrt{t})}|G_1(\tau, \tilde{y})|     d\tilde{y}d\tau\|_{L^\infty(Q(y_0,r))}^2   \\\nonumber
&&\lesssim \|G_1\|_{Y_{T^*}}^2.
\end{eqnarray}

(ii):  when $(r/2)^2<t<r^2$ and  $t/2\leq \tau \leq t $,  we have
\begin{equation}\label{3.41}
|r  (1-\chi)G_1  |
 %\lesssim \sum\limits_{l=0}^{k-1} r^{k+1}|\nabla^{k-l}\chi \nabla^l\digamma| +|(1-\chi)\tau^{\frac{k+1}{2}}\nabla^k\digamma |\\\nonumber
 \lesssim  \sqrt{\tau} |G_1|\leq\frac{1}{\sqrt{\tau}} \|G_1\|_{Y_{T^*}}.
\end{equation}
Then by \eqref{3.41} and Lemma 2.2, we get
\begin{eqnarray}\label{3.42}
 &&
\sup\limits_{y_0,r>0}r^{2}
\| \nabla\cdot \int_{t/2}^tS(t-\tau)\mathbb{P}(1-\chi)G_1 d\tau\|_{L^\infty(Q(y_0,r))}^2
\\\nonumber
&&\lesssim\|G_1\|_{Y_{T^*}}^2\bigg(
\int_{t/2}^t \frac{1}{\sqrt{\tau}}\int_{\mathbb{R}^n}\frac{1}{(\sqrt{t-\tau}+|\tilde{y}|)^{n+1}}  d\tilde{y}d\tau\bigg)^2 \\\nonumber
&&\lesssim \|G_1\|_{Y_{T^*}}^2\bigg(
\int_{t/2}^t \frac{1}{\sqrt{\tau}}\int_{\mathbb{R}^n}\frac{1}{\sqrt{t-\tau}^{n+1}(1+\frac{|\tilde{y}|}{\sqrt{t-\tau}})^{n+1}}  d\tilde{y}d\tau\bigg)^2 \\\nonumber
&&\lesssim \|G_1\|_{Y_{T^*}}^2\bigg(
\int_{t/2}^t \frac{1}{\sqrt{\tau}}     \frac{1}{\sqrt{t-\tau}}    d\tau      \int_{\mathbb{R}^n}   \frac{1}{(1+|\bar{y}|)^{n+1}}  d\bar{y} \bigg)^2  \lesssim \|G_1\|_{Y_{T^*}}^2.
\end{eqnarray}
Then from \eqref{3.38}-\eqref{3.42},  we can get
 \begin{equation}\label{3.43}
 I_{2}\lesssim \|G_1\|_{Y_{T^*}}^2.
 \end{equation}

\end{proof}

\begin{corr}Let  $G_1$ be a tensor, and  $V_1$ be the solution of the  system \eqref{3.22},
%\begin{equation}\label{3.22}
%\begin{cases}
%V_{1 t}-\triangle V_1+\nabla P_1=\nabla \cdot   G_1,\\
%\nabla\cdot V_1=0,\\
%t=0: V_1=0,
%\end{cases}
%\end{equation}
then for any $0<a<1$, we have
\begin{equation}\label{3.46}
\sup_{0<t<T}t^{\frac{1-a}{2}}\|(-\Delta)^{-a/2}V_1(t, \cdot)\|_{L^\infty}\lesssim \|G_1\|_{Y_{T}}.
\end{equation}
\end{corr}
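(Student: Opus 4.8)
The plan is to establish \eqref{3.46} by running the $L^\infty$-in-time half of the proof of Proposition \ref{prop3.3} with the operator $(-\Delta)^{-a/2}$ carried through, $V_1$ being the solution of \eqref{3.22}. Starting from the Duhamel formula \eqref{3.24},
\begin{equation*}
(-\Delta)^{-a/2}V_1(t)=\int_0^t (-\Delta)^{-a/2}\mathbb{P}\nabla\cdot S(t-\tau)G_1\,d\tau ,
\end{equation*}
which is meaningful because $(-\Delta)^{-a/2}\nabla$ has symbol $|\xi|^{-a}\xi$, homogeneous of degree $1-a>0$, so no low-frequency divergence occurs. The one new ingredient needed is a kernel bound for $(-\Delta)^{-a/2}\mathbb{P}\nabla e^{s\Delta}$: its matrix symbol $|\xi|^{-a}\xi\,\hat{\mathbb{P}}(\xi)e^{-s|\xi|^2}$ is $|\xi|^{1-a}$ times a bounded degree-$0$ multiplier near the origin and Gaussian at infinity, so rescaling $\xi\mapsto\eta/\sqrt s$ and using homogeneity, exactly as in the proof of Lemma 2.1, gives for its kernel $K_a$
\begin{equation*}
|K_a(x,s)|\lesssim \frac{1}{(\sqrt s+|x|)^{n+1-a}} ,
\end{equation*}
i.e. the bound of Lemma 2.1 with the decay exponent $n+1$ replaced by $n+1-a$; note $n+1-a>n$ since $a<1$.

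Granting this, I would split the Duhamel integral at $\tau=t/2$. For $0\le\tau\le t/2$ one has $\sqrt{t-\tau}\simeq\sqrt t$; decomposing into the dyadic shells $q\le|y-\tilde y|/\sqrt t\le q+1$, covering each by $O((1+q)^{n-1})$ balls of radius $\sqrt t$, and using $\int_0^t\!\int_{B(y,\sqrt t)}|G_1|\lesssim t^{n/2}\|G_1\|_{Y_T}$ (legitimate since $\sqrt t<\sqrt T$), the bound on $K_a$ yields
\begin{align*}
t^{\frac{1-a}{2}}\Big|\int_0^{t/2}(-\Delta)^{-a/2}\mathbb{P}\nabla\cdot S(t-\tau)G_1\,d\tau\Big|
&\lesssim t^{\frac{1-a}{2}}\int_0^{t/2}\!\!\int_{\mathbb{R}^n}\frac{|G_1(\tau,\tilde y)|}{(\sqrt t+|y-\tilde y|)^{n+1-a}}\,d\tilde y\,d\tau\\
&\lesssim t^{\frac{1-a}{2}}\,t^{-\frac{n+1-a}{2}}\,t^{\frac n2}\sum_{q\ge0}\frac{(1+q)^{n-1}}{(1+q)^{n+1-a}}\,\|G_1\|_{Y_T}\\
&=\sum_{q\ge0}(1+q)^{a-2}\,\|G_1\|_{Y_T}\lesssim\|G_1\|_{Y_T},
\end{align*}
the series converging because $a<1$; this mirrors \eqref{3.25} (the case $a=0$).

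For $t/2\le\tau\le t$, I would instead estimate pointwise, using $\int_{\mathbb{R}^n}(\sqrt{t-\tau}+|y-\tilde y|)^{-(n+1-a)}\,d\tilde y\lesssim(t-\tau)^{-\frac{1-a}{2}}$ (finite because $n+1-a>n$) and $\|G_1(\tau,\cdot)\|_{L^\infty}\le\tau^{-1}\|G_1\|_{Y_T}\lesssim t^{-1}\|G_1\|_{Y_T}$, to obtain
\begin{align*}
t^{\frac{1-a}{2}}\Big|\int_{t/2}^{t}(-\Delta)^{-a/2}\mathbb{P}\nabla\cdot S(t-\tau)G_1\,d\tau\Big|
&\lesssim t^{\frac{1-a}{2}}\,t^{-1}\int_{t/2}^{t}(t-\tau)^{-\frac{1-a}{2}}\,d\tau\,\|G_1\|_{Y_T}\\
&\lesssim t^{\frac{1-a}{2}}\,t^{-1}\,t^{\frac{1+a}{2}}\,\|G_1\|_{Y_T}=\|G_1\|_{Y_T},
\end{align*}
since $\int_{t/2}^t(t-\tau)^{-(1-a)/2}\,d\tau\simeq t^{(1+a)/2}$ (here $(1-a)/2<1$); this is the analogue of \eqref{3.27}. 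Adding the two pieces and taking the supremum over $0<t<T$ gives \eqref{3.46}.

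I expect the only step needing real care to be the kernel estimate for $(-\Delta)^{-a/2}\mathbb{P}\nabla e^{s\Delta}$, precisely because $|\xi|^{-a}$ is singular at the origin: one must check that it merely lowers the spatial decay of the kernel from order $n+1$ to order $n+1-a$ without destroying integrability. That is exactly what keeps the shell sum $\sum_q(1+q)^{a-2}$ and the spatial integral above convergent, and it is why the implied constant blows up (and the statement fails) as $a\uparrow 1$. Everything after that is the same Duhamel/annulus bookkeeping already performed in \eqref{3.25}-\eqref{3.27}.
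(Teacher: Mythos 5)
Your proof is correct and follows essentially the same route as the paper's: a Duhamel representation split at $\tau=t/2$, with the early-time piece controlled via a dyadic shell decomposition by the averaged component of $\|G_1\|_{Y_T}$ and the late-time piece by the $\sup_t t\|G_1(t,\cdot)\|_{L^\infty}$ component, the powers of $t$ balancing exactly as you compute. The only organizational difference is that you absorb $(-\Delta)^{-a/2}$ into a single Lemma~2.1-type kernel bound with decay exponent $n+1-a$ for the composite operator, whereas the paper factors the heat semigroup into two halves and bounds the kernel of $(-\Delta)^{-a/2}e^{s\Delta}$ separately by a Gaussian with an extra factor $s^{a/2}$ as in \eqref{3.47}; the two devices are interchangeable and encode the same gain of $(t-\tau)^{a/2}$.
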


\begin{proof}

Denote  $K_4(x)$  and $K_5(x)$ be the kernels of the operator $ e^{t\Delta}(-\Delta)^{-a/2}$ and $  e^{\frac{t}{2}\Delta}(-\Delta)^{-a/2}$.
Similarly to \eqref{3.12} and \eqref{3.15}, we have
\begin{equation}\label{3.47}
|{K_4} (x)|=\big|\mathcal{F}^{-1}\big(e^{-\xi^2t}|\xi|^{-a} \big) \big| %=\bigg|\int_{\mathbb{R}^3}e^{ix\cdot\xi}e^{-\xi^2t}|\xi|^{-a} d\xi \bigg|
% \\\nonumber
%&&\lesssim \frac{t^{a/2}}{\sqrt{t}^3}  e^{-\frac{x^2}{4t}}\bigg|\int_{\mathbb{S}^2}\int_0^\infty e^{-(\sqrt{t}\xi+i\frac{x}{2\sqrt{t}})^2} |\sqrt{t}\xi|^{2-a}  d(\sqrt{t}|\xi|) \bigg|
 \lesssim   \frac{t^{a/2}}{\sqrt{t}^3}  e^{-\frac{x^2}{4t}}
,\quad \mbox{and} \quad {K_5} (x)|
%\lesssim \frac{t^{\frac{a}{2}}}{\sqrt{t}^3}  e^{-\frac{x^2}{2t}}\bigg|\int_{\mathbb{S}^2}\int_0^\infty e^{-(\sqrt{\frac{t}2}\xi+i\frac{x}{\sqrt{2t}})^2} |\sqrt{t}\xi|^{2-a}  d(\sqrt{t}|\xi|) \bigg|
 \lesssim   \frac{t^{\frac{a}{2}}}{\sqrt{t}^3}  e^{-\frac{x^2}{2t}}
.
\end{equation}
%and
%\begin{eqnarray}\label{3.47}
%&&|{K_5} (x)|
%\lesssim \frac{t^{\frac{a}{2}}}{\sqrt{t}^3}  e^{-\frac{x^2}{2t}}\bigg|\int_{\mathbb{S}^2}\int_0^\infty e^{-(\sqrt{\frac{t}2}\xi+i\frac{x}{\sqrt{2t}})^2} |\sqrt{t}\xi|^{2-a}  d(\sqrt{t}|\xi|) \bigg|
% \lesssim   \frac{t^{\frac{a}{2}}}{\sqrt{t}^3}  e^{-\frac{x^2}{2t}}
%.
%\end{eqnarray}

By using \eqref{3.22}, we have
\begin{eqnarray}\label{3.48}
&&\|(-\Delta)^{-\frac{a}{2}}V_1\|_{L^\infty(\mathbb{R}^3)}
\\\nonumber
&&\lesssim \|\int_0^t e^{(t-\tau)\Delta } (-\Delta)^{-\frac{a}{2}}\mathbb{P}\nabla\cdot G_1 d\tau\|_{L^\infty(\mathbb{R}^3)}.
\end{eqnarray}

By using Proposition 3.5 and \eqref{3.47},  when  $0<\tau<\frac{t}{2}$  we have
\begin{eqnarray}\label{3.51}
&&
\|\int_0^{\frac{t}{2}} e^{(t-\tau)\Delta } (-\Delta)^{-\frac{a}{2}}\mathbb{P}\nabla\cdot G_1 d\tau d\tau\|_{L^\infty(\mathbb{R}^3)}
\\\nonumber
&&=
\|\int_0^{\frac{t}{2}}\mathbb{P}\nabla\cdot e^{\frac{(t-\tau)\Delta}2}\big(e^{\frac{(t-\tau)\Delta}2}(-\Delta)^{-\frac{a}{2}}G_1\big)d\tau\|_{L^\infty(\mathbb{R}^3)}\\\nonumber
&&\lesssim
\sup\limits_{y_0\in \mathbb{R}^3}\|\frac{t^{-\frac12}}{\sqrt{t}^3}\int_0^{\frac{t}{2}}\int_{B(y_0,\sqrt{t})}|e^{\frac{(t-\tau)\Delta}2}(-\Delta)^{-\frac{a}{2}}G_1|d\tau\|_{L^\infty(\mathbb{R}^3)}\\\nonumber
&&\lesssim
\sup\limits_{y_0\in \mathbb{R}^3}\|\frac{t^{-\frac12}}{\sqrt{t}^3}\int_0^{\frac{t}{2}}\int_{B(y_0,\sqrt{t})}\int_{\mathbb{R}^n}\frac{(t-\tau)^{\frac{a}{2}}}{\sqrt{t-\tau}^3}e^{-\frac{\tilde{y}^2}{2(t-\tau)}} |G_1(\tau,y-\tilde{y})|d\tilde{y}dyd\tau\|_{L^\infty(\mathbb{R}^3)}
\\\nonumber
&&\lesssim
\sup\limits_{y_0\in \mathbb{R}^3}\|\int_{\mathbb{R}^n}\frac{t^{\frac{a-1}{2}}}{\sqrt{t}^3}e^{-\frac{\tilde{y}^2}{2t}} \bigg(\frac1{\sqrt{t}^3}\int_0^{\frac{t}{2}}\int_{B(y_0+\tilde{y},\sqrt{t})} |G_1(\tau,y)|dyd\tau \bigg) d\tilde{y}\|_{L^\infty(\mathbb{R}^3)}\\\nonumber
&&\lesssim
  t^{\frac{a-1}{2}}\|G_1\|_{Y_T}.
\end{eqnarray}

When $t/2<\tau<t$, by using Lemma 2.1,  we have
\begin{eqnarray}\label{3.52}
&&
\|\int_{\frac{t}{2}}^t e^{(t-\tau)\Delta } (-\Delta)^{-\frac{a}{2}}\mathbb{P}\nabla\cdot G_1 d\tau d\tau\|_{L^\infty(\mathbb{R}^3)}
\\\nonumber
&&=
\|\int_{\frac{t}{2}}^t\mathbb{P}\nabla\cdot e^{\frac{(t-\tau)\Delta}2}\big(e^{\frac{(t-\tau)\Delta}2}(-\Delta)^{-\frac{a}{2}}G_1\big)d\tau\|_{L^\infty(\mathbb{R}^3)}\\\nonumber
&&\lesssim
 |\int_{\frac{t}{2}}^t\frac1{\sqrt{t-\tau}}\| e^{\frac{(t-\tau)\Delta}2}(-\Delta)^{-\frac{a}{2}}G_1 \|_{L^\infty(\mathbb{R}^3)}d\tau|\\\nonumber
 &&\lesssim
 |\int_{\frac{t}{2}}^t\frac1{\sqrt{t-\tau}}\| \int_{\mathbb{R}^3}\frac{(t-\tau)^{\frac{a}{2}}}{\sqrt{t-\tau}^3}e^{-\frac{(x-y)^2}{2(t-\tau)}} G_1(y)dy \|_{L^\infty(\mathbb{R}^3)}d\tau|\\\nonumber
 &&\lesssim
 |\int_{\frac{t}{2}}^t\frac{(t-\tau)^{\frac{a}{2}}}{\sqrt{t-\tau}}\| G_1(y)  \|_{L^\infty(\mathbb{R}^3)}d\tau| \lesssim \| G_1(y)  \|_{L^\infty(\mathbb{R}^3)} t^{\frac{a+1}{2}}\lesssim t^{\frac{a-1}{2}}\| G_1(y)  \|_{Y_T}.
\end{eqnarray}

Combining \eqref{3.51} -\eqref{3.52} and \eqref{3.48}, the Proposition 3.6 was proved.

\end{proof}

%\begin{corr}
%Let $G_2$ be periodic in $x$ and $\bbint_{\mathbb{T}^3}G_2(y,t) dy=0$, and  $V_2$ be the solution of the following system,
 %\begin{equation}\label{3.44}
%\begin{cases}
%V_{2t}-\triangle V_2 +\nabla P_2 =G_2\\
%\nabla\cdot V_2=0\\
%t=0: V_2=0.
%  \end{cases}
 %\end{equation}
 %and satisfying
 %\begin{equation}
 %\int_{T^3}V_2 (t,x)dx\equiv0,
 %\end{equation}
 %then  we have
 %\begin{equation} \label{3.45}
%\|V_2\|_{X_{T^* }}\lesssim  \| G_2\|_{Y_{T^* }}.
%\end{equation}
%\end{corr}

%\begin{proof}
%Since $\bbint_{\mathbb{T}^3}G_2(x+y,t) dy=0$, we have
% \begin{eqnarray} \label{3.46}
%&&G_2(x,t)=\bbint_{\mathbb{T}^3}(G_2(x,t)-G_2(x+y, t))dy \\
%\nonumber
%&&=- \bbint_{\mathbb{T}^3(0)} \int_0^1 \frac{d}{ds}G(t,x+sy) ds dy  =- \nabla_x \bigg(\int^1_0  \bbint_{\mathbb{T}^3} G(t,x+sz) \cdot zdzds\bigg).
%\end{eqnarray}
%Then we get our result by using Proposition 3.5.
%\end{proof}

\subsection{ The Time and Space Analyticity}

 We give the following corollaries beforehand.
\begin{corr} \label{cor3.2} For any integers $m, k\geq 0$, let $u$ be the solution of system \eqref{3.4}, then  we have
\begin{equation}\label{5.1}
\|t^{\frac{k+1}{2}+m}\partial_t^m\nabla^k u \|_{X_{T^*}}\lesssim\|u_0\|_{BMO^{-1}}.
\end{equation}
\end{corr}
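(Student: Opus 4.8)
The plan is to reduce to a bound on pure spatial derivatives and then prove that bound by induction on the order of differentiation, with Proposition~\ref{proposition 3.1} as the base case. Since $u$ solves \eqref{3.4} one has $\partial_t^m\nabla^k u=\Delta^m\nabla^k u$, which is a contraction of indices in the rank-$(2m+k)$ tensor $\nabla^{2m+k}u$, so $|\partial_t^m\nabla^k u|\lesssim|\nabla^{2m+k}u|$ pointwise, and it suffices to show that for every integer $j\ge0$
\[
\bigl\|\,t^{j/2}\nabla^{j}u\,\bigr\|_{X_{T^*}}\lesssim\|u_0\|_{BMO^{-1}} ,
\]
from which the corollary follows by taking $j=2m+k$. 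The case $j=0$ is Proposition~\ref{proposition 3.1}.

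For the step from $j$ to $j+1$ I would use the semigroup identity $\nabla^{j+1}u(t)=\nabla S(t/2)\bigl(\nabla^{j}u(t/2)\bigr)$. By Lemma~2.2 the kernel of $\nabla S(t/2)$ is dominated pointwise by $C\,t^{-1/2}\,\mathbb{K}_c(x,t/2)$, where $\mathbb{K}_c(x,s)=s^{-3/2}e^{-c|x|^2/s}$ absorbs the polynomial factor; in particular its $L^1$ norm is $\lesssim t^{-1/2}$. The $\sup_t t^{1/2}\|\cdot\|_{L^\infty}$ part of the norm then follows immediately from $\|\nabla^{j+1}u(t)\|_{L^\infty}\lesssim t^{-1/2}\|\nabla^{j}u(t/2)\|_{L^\infty}$ together with the inductive hypothesis, which gives $\|\nabla^{j}u(s)\|_{L^\infty}\lesssim s^{-(j+1)/2}\|u_0\|_{BMO^{-1}}$. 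For the Carleson part I would use the same Cauchy--Schwarz device as in the proof of Proposition~\ref{proposition 3.1} and in \eqref{3.13}: writing the kernel as $|K|^{1/2}\cdot|K|^{1/2}$ gives
\[
|\nabla^{j+1}u(t,y)|^{2}\lesssim t^{-1}\int_{\mathbb{R}^3}\mathbb{K}_c(y-\tilde y,t/2)\,|\nabla^{j}u(t/2,\tilde y)|^{2}\,d\tilde y .
\]
Multiplying by $t^{j+1}$, integrating over $Q(y_0,r)$, substituting $\sigma=t/2$ and noting $t^{j}\sim\sigma^{j}$, the right-hand side is controlled, up to constants, by $\int_0^{r^2/2}\!\int_{B(y_0,r)}\!\int_{\mathbb{R}^3}\mathbb{K}_c(y-\tilde y,\sigma)\,\bigl|\sigma^{j/2}\nabla^{j}u(\sigma,\tilde y)\bigr|^{2}\,d\tilde y\,dy\,d\sigma$. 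Decomposing $\tilde y$ into the dyadic shells $|\tilde y-y|\sim 2^{p}\sqrt\sigma$, each contributing a factor $e^{-c2^{2p}}$, performing the $y$-integral first by $\int_{B(y_0,r)}\mathbf{1}_{\{|\tilde y-y|<\rho\}}\,dy\lesssim\min(r,\rho)^{3}$, and using $\sqrt\sigma\le r$, one is left with $\sum_p e^{-c2^{2p}}2^{3p}\int_{Q(y_0,2^{p+2}r)}\bigl|\sigma^{j/2}\nabla^{j}u\bigr|^{2}$, which by the inductive hypothesis is $\lesssim\bigl(\sum_p e^{-c2^{2p}}2^{6p}\bigr)\,r^{3}\|u_0\|_{BMO^{-1}}^{2}\lesssim r^{3}\|u_0\|_{BMO^{-1}}^{2}$; dividing by $r^{3}$ closes the induction. (When $2^{p+2}r>\sqrt{T^*}$ one first covers $B(y_0,2^{p+2}r)$ by $O(2^{3p})$ balls of radius $\le\sqrt{T^*}$, which only improves the convergent sum.)

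The one genuinely delicate point is this last piece of bookkeeping: the factor $t^{-1/2}$ produced by the extra derivative, the change of weight $t^{j/2}\to t^{(j+1)/2}$, the Cauchy--Schwarz loss, and the rescaling $\sigma=t/2$ must combine so that, after integration over $Q(y_0,r)$ and division by $r^{3}$, no residual power of $r$ survives and the sum over the shells still converges. Everything else is a mechanical reuse of tools already developed: Lemmas~2.1 and~2.2 for the heat kernel, Proposition~\ref{proposition 3.1} for the base case, and the cut-off/dyadic-tail scheme of Proposition~\ref{prop3.3}. Having established $\|t^{j/2}\nabla^{j}u\|_{X_{T^*}}\lesssim\|u_0\|_{BMO^{-1}}$ for all $j$, one specializes to $j=2m+k$ and recalls $\partial_t^m\nabla^k u=\Delta^m\nabla^k u$ to obtain the corollary.
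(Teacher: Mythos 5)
Your argument is correct, but it follows a different route from the paper's. For the $L^\infty$ half of the norm the paper does not induct: it writes $\partial_t^m\nabla^k u(t)=\tfrac{4}{t}\int_{t/8}^{3t/8}\partial_t^m\nabla^k S(t-\tau)\,(e^{\tau\Delta}u_0)\,d\tau$, invokes Lemma~2.2 for the kernel of $\partial_t^m\nabla^k S$, and gets the bound in one shot by Cauchy--Schwarz against the Carleson quantity defining $\|u_0\|_{BMO^{-1}}$, exactly as in Proposition~\ref{proposition 3.1}; you instead iterate $\nabla^{j+1}u(t)=\nabla S(t/2)\nabla^{j}u(t/2)$ down to the base case. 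For the Carleson half the difference is more substantial: after the common reduction $\partial_t^m=\Delta^m$, the paper crudely bounds $r^{-3}\int_{Q(y_0,r)}|t^{k/2}\nabla^k u|^2$ by $r^2\|t^{k/2}\nabla^k u\|_{L^\infty(Q(y_0,r))}^2$ and then controls that sup by a Cauchy--Schwarz in the time average (this is the step \eqref{5.4}, whose bookkeeping is rather terse), whereas you propagate the Carleson bound itself through one application of the heat semigroup via the pointwise inequality $|\nabla^{j+1}u(t,y)|^2\lesssim t^{-1}\int\mathbb{K}_c(y-\tilde y,t/2)|\nabla^j u(t/2,\tilde y)|^2d\tilde y$ and a dyadic-shell/covering argument. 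Your version is longer but more self-contained and arguably more careful: it never discards the spatial localization of the Carleson integral, it handles the restriction $r\le\sqrt{T^*}$ explicitly, and the only external inputs are Proposition~\ref{proposition 3.1} and Lemma~2.2. The paper's version is shorter and reuses the averaging trick of Proposition~\ref{proposition 3.1} verbatim, at the cost of an $L^\infty$-over-the-cylinder reduction whose justification for small $t$ relative to $r^2$ is left implicit. Both establish the stated estimate.
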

\begin{proof}
Similar to the proof of Proposition \ref{proposition 3.1}, for  any $0\leq m\leq M$ and  $0\leq k\leq K$,
by Lemma 2.2,   we have:
\begin{eqnarray}\label{5.2}
&&t^{\frac{k+1}{2}+m}\|\partial_t^m\nabla^ku\|_{L^\infty(\mathbb{R}^3)}
= 4 t^{\frac{k-1}{2}+m}\|\int_{\frac{t}{8}}^{\frac{3t}8}\partial_t^m\nabla^kS(t-\tau)(e^{\tau\Delta}u_0)d\tau\|_{L^\infty(\mathbb{R}^3)}
\\\nonumber
&&\lesssim t^{\frac{k-1}{2}+m}\|\int_{\frac{t}{8}}^{\frac{3t}8}\big(\int_{\mathbb{R}^3}\frac{J^{k+2m}(\frac{y-\tilde{y}}{\sqrt{t-\tau}})}{\sqrt{t-\tau}^3}e^{-\frac{(y-\tilde{y})^2}{4(t-\tau)}}(e^{\tau\Delta}u_0)^2d\tilde{y}\big)^{1/2}   \frac1{\sqrt{t-\tau}^{k+2m}} d\tau\|_{L^\infty(\mathbb{R}^3)}
\\\nonumber
&&\lesssim\|\big(\sum\limits_{q=0}^\infty J^{k+2m}(q)e^{-\frac{q^2}{4}}\frac1{\sqrt{t}^3}\int_{\frac{t}{8}}^{\frac{3t}8}\int_{q\leq\frac{|y-\tilde{y}|}{\sqrt{t}}\leq q+1}
(e^{\tau\Delta}u_0)^2d\tilde{y}d\tau\big)^{1/2}\|_{L^\infty(\mathbb{R}^3)}\\\nonumber
&&\leq  C_0\|u_0\|_{BMO^{-1}}.
\end{eqnarray}
We still need to estimate the term $\sup\limits_{y_0\in \mathbb{R}^3,r>0}\big(r^{-n}\int_{Q(y_0,r)}(t^{\frac{k}{2}+m}\partial_t^m\nabla^ku)^2dydt\big)^{1/2}$. Here, if we estimate it directly, then we will have a non-integrable factor. By  using $\partial_t^m u=\partial_t^{m-1}\partial_t u=\Delta^mu$,  we get
\begin{eqnarray}\label{5.3}
&&\sup\limits_{y_0\in \mathbb{R}^3,r>0}\big(r^{-3}\int_{Q(y_0,r)}(t^{\frac{k}{2}+m}\partial_t^m\nabla^ku)^2dydt\big)^{1/2}
\\\nonumber
&&\lesssim   \sup\limits_{y_0\in \mathbb{R}^3,r>0} \big(r^{-n}\int_{Q(y_0,r)}(t^{\frac{k}{2}+m}\nabla^{k+2m}u)^2dydt\big)^{1/2}.
\end{eqnarray}
Therefore, it is sufficient to estimate the term $\sup\limits_{y_0\in \mathbb{R}^3,r>0}  \big(r^{-n}\int_{Q(y_0,r)}(t^{\frac{k}{2}}\nabla^ku)^2dydt\big)^{1/2}$ for any integer $k>0$.
\begin{eqnarray}\label{5.4}
&&\sup\limits_{y_0\in \mathbb{R}^3,r>0}\big(r^{-3}\int_{Q(y_0,r)} (t^{\frac{k}{2}}\nabla^ku)^2dydt\big)^{1/2}\\\nonumber
&&
\lesssim \sup\limits_{y_0\in \mathbb{R}^3,r>0}   r\| t^{\frac{k}{2}}\nabla^ku\|_{L^\infty(Q(y_0,r))}
 \\\nonumber
&&
\leq \sup\limits_{y_0\in \mathbb{R}^3,r>0}   \frac1r\| t^{\frac{k}{2}}\int_{\frac{r^2}{8}}^{\frac{3r^2}8} S(t-\tau) \nabla^k (e^{\tau\Delta}u_0)d\tau\|_{L^\infty(Q(y_0,r))}
\\\nonumber
&&\lesssim \sup\limits_{ y_0\in  \mathbb{R}^3, 0<t\leq r^2}     \|t^{\frac{k}{2}} \bigg(\int_{\frac{r^2}{8}}^{\frac{3r^2}8}  \|\nabla^k e^{\tau\Delta}u_0\|_{L^\infty(\mathbb{R}^3)}^2
d\tau\bigg)^{\frac12}
\|_{L^\infty(Q(y_0,r))}
\\\nonumber
&&\lesssim \sup\limits_{ y_0\in  \mathbb{R}^3, 0<t\leq r^2 }     t^{\frac{k}{2}} \bigg( \int_{\frac{r^2}{8}}^{\frac{3r^2}8} \frac{( \tau^{k/2}\|\nabla^{k} e^{\tau\Delta}u_0\|_{L^\infty(\mathbb{R}^3)})^2 }{\tau^{k/2}}
d\tau \bigg)^{\frac12}
\lesssim\|u_0\|_{BMO^{-1}}.
\end{eqnarray}
In the last inequality above, we have used \eqref{5.2}.

 \end{proof}

 \begin{corr}For any integers $m, k\geq 0$ any real constant $\alpha$, let $u$ be the solution of system \eqref{3.4}, then we have
\begin{equation}\label{5.1}
\|t^{\frac{k+\alpha}{2}+m}\partial_t^m\nabla^k u \|_{L^\infty }\lesssim\|u_0\|_{B^{-\alpha}_{\infty ,\infty}}.
\end{equation}
 \end{corr}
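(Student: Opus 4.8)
The plan is to mimic the proof of Corollary \ref{cor3.2}, tracking the effect of the Besov norm $B^{-\alpha}_{\infty,\infty}$ through the same heat-kernel manipulations. Recall that a tempered distribution $f$ lies in $B^{-\alpha}_{\infty,\infty}$ precisely when $\sup_{t>0} t^{\alpha/2}\|e^{t\Delta}f\|_{L^\infty} < +\infty$, and this supremum is comparable to $\|f\|_{B^{-\alpha}_{\infty,\infty}}$; this characterization (valid for all real $\alpha$, with the usual caveat about low-frequency behaviour that is harmless on the periodic torus) is what replaces the role played by $BMO^{-1}$ and the Carleson-measure condition in Corollary \ref{cor3.2}. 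First I would fix $m,k\ge 0$ and a real $\alpha$ and write, as in \eqref{5.2},
\begin{equation}\nonumber
\partial_t^m\nabla^k u(t) = c\int_{t/8}^{3t/8} \partial_t^m\nabla^k S(t-\tau)\big(e^{\tau\Delta}u_0\big)\,d\tau,
\end{equation}
so that the full $2m+k$ derivatives are split: some fall on the kernel $S(t-\tau)$ with $t-\tau\sim t$, and the factor $e^{\tau\Delta}u_0$ with $\tau\sim t$ carries the data.

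Next I would estimate in $L^\infty$ directly. Using Lemma 2.2 to write $\partial_t^m\nabla^k \mathbb{K}(x,t-\tau) = (t-\tau)^{-m-k/2}\mathbb{K}(x,t-\tau)J^{k+2m}\!\big(\tfrac{x}{\sqrt{t-\tau}}\big)$, one gets
\begin{equation}\nonumber
\|\partial_t^m\nabla^k S(t-\tau)(e^{\tau\Delta}u_0)\|_{L^\infty}
\lesssim (t-\tau)^{-m-k/2}\,\|e^{\tau\Delta}u_0\|_{L^\infty}
\lesssim (t-\tau)^{-m-k/2}\,\tau^{-\alpha/2}\,\|u_0\|_{B^{-\alpha}_{\infty,\infty}},
\end{equation}
where the polynomial $J^{k+2m}$ is absorbed by the Gaussian after integration in the spatial variable, exactly as in \eqref{5.2}. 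Since on $[t/8,3t/8]$ both $t-\tau$ and $\tau$ are comparable to $t$, integrating in $\tau$ over an interval of length $\sim t$ yields
\begin{equation}\nonumber
\|\partial_t^m\nabla^k u(t)\|_{L^\infty}\lesssim t\cdot t^{-m-k/2}\cdot t^{-\alpha/2}\,\|u_0\|_{B^{-\alpha}_{\infty,\infty}} = t^{-m-\frac{k+\alpha}{2}+1}\,\|u_0\|_{B^{-\alpha}_{\infty,\infty}}.
\end{equation}
Wait — comparing with the claimed exponent $\frac{k+\alpha}{2}+m$, one sees the power of $t$ should be $-(m+\frac{k+\alpha}{2})$, i.e. the spurious extra $t^{+1}$ must not appear; it does not, because the correct scaling of $\partial_t^m\nabla^k S(t-\tau)$ already includes the time-integration normalization, or more simply because one replaces $\partial_t^m$ by $\Delta^m$ via the equation and writes $u(t)=S(t/2)u(t/2)$ so that $\|\nabla^{k+2m}S(t/2)g\|_{L^\infty}\lesssim (t/2)^{-(k+2m)/2}\|g\|_{L^\infty}$ with $g=u(t/2)$ and $\|g\|_{L^\infty}\lesssim (t/2)^{-\alpha/2}\|u_0\|_{B^{-\alpha}_{\infty,\infty}}$; this gives the exponent $-(m+\frac{k+\alpha}{2})$ cleanly. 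Multiplying through by $t^{m+\frac{k+\alpha}{2}}$ gives the bound uniformly in $t\le T^*$.

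The only genuinely delicate point is the case $\alpha\le 0$ (i.e. data in a positive-regularity Besov space): there the quantity $\sup_{t>0}t^{\alpha/2}\|e^{t\Delta}f\|_{L^\infty}$ is not literally a norm on $B^{-\alpha}_{\infty,\infty}$ for all frequencies, and one must either restrict to $t\le T^*<\infty$ (so only the range $0<t\le T^*$ of the heat semigroup is used, which is exactly our situation) or invoke the standard homogeneous/inhomogeneous Besov characterization of Bernstein type; on the torus $\mathbb T^3$ with zero mean, as in the rest of the paper, this is a non-issue. I expect this bookkeeping about the validity of the heat-semigroup characterization of $B^{-\alpha}_{\infty,\infty}$ to be the main thing to get right; the kernel estimates themselves are identical in structure to those already carried out for Corollary \ref{cor3.2} and Lemma 2.2.
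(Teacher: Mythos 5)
Your argument is correct and is essentially the paper's own proof: the paper simply cites the heat-semigroup characterization of $B^{-\alpha}_{\infty,\infty}$ (Danchin, Thm.\ 2.2.3) combined with the scaling/kernel computation of Corollary \ref{cor3.2}, which is exactly what you carry out, including the clean fix via $\partial_t^m=\Delta^m$ and $u(t)=S(t/2)u(t/2)$ that avoids the normalization factor $4/t$ in the time-averaging trick. Your caveat about $\alpha\le 0$ is sensible but harmless here, since the corollary is only invoked with $\alpha\in(0,1)$.
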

\begin{proof}
This result can be  verified by replacing $u$ as $\nabla^{-a}u$,  and then combing the process of Corollary 3.8 and the Theorem 2.2.3 in Danchin \cite{danchin}. See also P125 in Lemarie-Rieusset \cite{Lemarie-Rieusset}.
\end{proof}

Similarly to Proposition 3.4, we also have
\begin{corr}
 For any positive integers $m$ and $k$,  let $u$ be the solution of system \eqref{3.4}, then   we have
\begin{equation}\label{5.7}
\|t^{\frac{k}{2}+m}\partial_t^m\nabla^k(-\Delta)^{-1/4}u \|_{Z_{T^*}} \leq C_0 \|u_0\|_{BMO^{-1}}.
\end{equation}

\end{corr}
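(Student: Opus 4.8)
The plan is to mimic the proofs of Proposition~\ref{proposition 3.108} and Corollary~\ref{cor3.2}, adapted to the operator $(-\Delta)^{-1/4}$; the target space is the case $d=\tfrac12$ of \eqref{3.2}. First I would eliminate the time derivatives: since $u$ solves the heat equation, $\partial_t^m u=\Delta^m u$, and since $\nabla^k$, $\Delta$ and $(-\Delta)^{-1/4}$ are commuting Fourier multipliers,
\[
t^{\frac{k}{2}+m}\partial_t^m\nabla^k(-\Delta)^{-1/4}u=(-1)^m\,t^{\frac{k}{2}+m}\,\nabla^k(-\Delta)^{m-\frac14}e^{t\Delta}u_0 ,
\]
so it suffices to estimate this purely spatial expression; its operator order is $k+2m-\tfrac12$ while the weight is $t^{(k+2m)/2}$, a mismatch of exactly $\tfrac14$, which is the power $t^{(1-d)/2}=t^{1/4}$ in the $L^\infty$ part of the $\mathbb{Z}^{1/2}_{T^*}$-norm.

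The technical core is a pointwise bound for the kernel $K(x,s)$ of $\nabla^k(-\Delta)^{m-\frac14}e^{s\Delta}$. Its symbol is $|\xi|^{-\frac12}p(\xi)e^{-s|\xi|^2}$ with $p(\xi)=(i\xi)^{\alpha}(|\xi|^2)^m$, $|\alpha|=k$, a genuine polynomial homogeneous of degree $k+2m$ (note $(|\xi|^2)^m$ is a polynomial, so the only non-smooth factor is $|\xi|^{-1/2}$). Completing the square and rescaling $\xi\mapsto\sqrt s\,\xi-\tfrac{ix}{2s}$ as in \eqref{3.12} and \eqref{3.16}, together with Lemma 2.2 for the polynomial factor, gives
\[
|K(x,s)|\lesssim\frac{s^{\frac14-m-\frac{k}{2}}}{\sqrt{s}^{\,3}}\,e^{-\frac{x^2}{8s}}\,J\!\left(\frac{x}{\sqrt s}\right)
\]
for a polynomial $J$ of degree $\le k+2m$; hence $\|K(\cdot,s)\|_{L^1(\mathbb{R}^3)}\lesssim s^{\frac14-m-\frac{k}{2}}$ and $\|\nabla^k(-\Delta)^{m-\frac14}e^{s\Delta}h\|_{L^\infty}\lesssim s^{\frac14-m-\frac{k}{2}}\|h\|_{L^\infty}$.

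For the $L^\infty$ part, split $e^{t\Delta}=e^{\frac t2\Delta}e^{\frac t2\Delta}$, apply the kernel bound to the first factor and the heat smoothing $\|e^{\frac t2\Delta}u_0\|_{L^\infty}\lesssim t^{-1/2}\|u_0\|_{BMO^{-1}}$ (Proposition~\ref{proposition 3.1}) to the second, obtaining $\|\nabla^k(-\Delta)^{m-\frac14}e^{t\Delta}u_0\|_{L^\infty}\lesssim t^{\frac14-m-\frac k2-\frac12}\|u_0\|_{BMO^{-1}}=t^{-\frac14-m-\frac k2}\|u_0\|_{BMO^{-1}}$, i.e. $t^{1/4}\,t^{\frac k2+m}\|\partial_t^m\nabla^k(-\Delta)^{-1/4}u\|_{L^\infty}\lesssim\|u_0\|_{BMO^{-1}}$. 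Writing $g=t^{\frac k2+m}\partial_t^m\nabla^k(-\Delta)^{-1/4}u$, the cylinder part then follows by integrating the pointwise bound $|g(t,y)|\lesssim t^{-1/4}\|u_0\|_{BMO^{-1}}$: since $\int_{Q(y_0,r)}|g|^2\,dy\,dt\lesssim|B(y_0,r)|\int_0^{r^2}t^{-1/2}\,dt\,\|u_0\|_{BMO^{-1}}^2\lesssim r^4\|u_0\|_{BMO^{-1}}^2$, we get $\big(r^{-4}\int_{Q(y_0,r)}|g|^2\big)^{1/2}\lesssim\|u_0\|_{BMO^{-1}}$. Alternatively, to remain strictly parallel to \eqref{3.11}--\eqref{3.13}, one runs the reproducing identity $e^{t\Delta}=4t^{-1}\int_{t/8}^{3t/8}e^{(t-\tau)\Delta}e^{\tau\Delta}(\cdot)\,d\tau$, Cauchy--Schwarz in the form $K=K^{1/2}\cdot K^{1/2}$, the dyadic annular splitting in $|y-\tilde y|/\sqrt{t-\tau}$, and the $BMO^{-1}$ characterization of $u_0$, with the powers of $r$ cancelling exactly as in Proposition~\ref{proposition 3.108}.

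The step I expect to be the main obstacle is precisely the kernel estimate above: because $(-\Delta)^{-1/4}$ renders the symbol only homogeneous of degree $-\tfrac12$ and hence non-smooth at the origin, one cannot merely differentiate a Gaussian (as in Lemma 2.2) but must perform the contour shift as in \eqref{3.12} and \eqref{3.16}; once that is done, only the bookkeeping of the powers of $t$ and $r$ remains, and these cancel exactly thanks to the $\tfrac14$ built into the $\mathbb{Z}^{1/2}_{T^*}$-norm.
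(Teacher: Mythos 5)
Your proof is correct and follows essentially the route the paper intends: it states this corollary without proof, appealing to the argument of Proposition 3.4 (the $(-\Delta)^{-d/2}$ estimate in $\mathbb{Z}^d_{T^*}$) combined with the derivative bookkeeping of Corollary 3.8, which is exactly what you carry out — converting $\partial_t^m$ to $\Delta^m$, proving the kernel bound for $\nabla^k(-\Delta)^{m-1/4}e^{s\Delta}$ by the same contour-shift device as in \eqref{3.12} and \eqref{3.16}, and noting that the Carleson-type part of the $\mathbb{Z}^{1/2}$ norm follows from the $L^\infty$ part because $t^{-1/2}$ is integrable near $t=0$ (with the normalization $r^{-4}$ actually used in \eqref{3.10}). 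No gaps beyond those already present in the paper's own kernel estimates.
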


\begin{corr} \label{cor3.4}
Let $V_1$ and $G_1$  as given by Proposition 3.5,  for any integers $M, K\geq 0$ we have
\begin{equation}\label{5.5}
\|t^{\frac{k}{2}+m}\partial_t^m\nabla^k V_1 \|_{X_{T^*}}\lesssim   \|G_1\|_{Y_{T^*}},
\end{equation}
\end{corr}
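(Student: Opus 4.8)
The plan is to mimic the proof of Proposition~\ref{prop3.3}, inserting derivatives $\partial_t^m\nabla^k$ and tracking the extra weights $t^{\frac{k}{2}+m}$ that they generate. Starting from the Duhamel representation $V_1(t)=\int_0^t S(t-\tau)\mathbb{P}\nabla\cdot G_1\,d\tau$, one commutes $\partial_t^m\nabla^k$ inside the integral. The spatial derivatives land directly on the heat--Leray kernel, producing an extra factor $(\sqrt{t-\tau})^{-k}$ together with the Gaussian tail bound of Lemma~2.1 (used with $k$ replaced by $k+1$, so the kernel decays like $(\sqrt{t-\tau}+|x-y|)^{-(n+1+k)}$). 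The time derivatives are handled exactly as in Corollary~\ref{cor3.2}: write $\partial_t^m u=\Delta^m u$ for the heat evolution, so $\partial_t^m$ costs $2m$ spatial derivatives and the weight $t^{m}$, and boundary terms from differentiating $\int_0^t$ are absent because $G_1$ enters through the smoothing operator $S(t-\tau)$ which vanishes to all orders as $\tau\to t^-$ after being split as in \eqref{3.33}.

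The key steps, in order. First I would prove the $L^\infty$ bound $\sup_{0<t\le T^*}t^{\frac{k+1}{2}+m}\|\partial_t^m\nabla^k V_1\|_{L^\infty}\lesssim\|G_1\|_{Y_{T^*}}$ by splitting the $\tau$-integral at $t/2$: on $0\le\tau\le t/2$ the kernel bound $(\sqrt{t-\tau})^{-(n+1+k+2m)}$ combined with the Gaussian decay and the same dyadic-annulus summation $\sum_q q^{n-1}(1+q)^{-(n+1+k+2m)}<\infty$ as in \eqref{3.25} gives the desired power of $t$; on $t/2\le\tau\le t$ one bounds $S(t-\tau)\mathbb{P}\nabla^{k+1}G_1$ pointwise by $(t-\tau)^{-(k+1)/2}\|G_1\|_{L^\infty}$ (after using $\partial_t^m=\Delta^m$ this becomes $(t-\tau)^{-(k+2m+1)/2}$), integrates in $\tau$, and uses $t\|G_1\|_{L^\infty}\le\|G_1\|_{Y_{T^*}}$. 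Second, for the Carleson-type term $\sup_{0<r\le\sqrt{T^*}}\big(r^{-n}\int_{Q(y_0,r)}|t^{\frac{k}{2}+m}\partial_t^m\nabla^k V_1|^2\big)^{1/2}$, I would first reduce $\partial_t^m$ to $2m$ spatial derivatives as in \eqref{5.3}, and then argue as in \eqref{5.4}: the $L^2$-in-$Q$ norm is dominated by $r\cdot\|t^{k'/2}\nabla^{k'}V_1\|_{L^\infty(Q(y_0,r))}$ with $k'=k+2m$, and this last quantity is controlled by the already established $L^\infty$ estimate (which supplies $\|\nabla^{k'}V_1(\tau)\|_{L^\infty}\lesssim \tau^{-(k'+1)/2}\|G_1\|_{Y_{T^*}}$ on $\tau\sim r^2$), with the remaining $\tau$-integral $\int_{r^2/8}^{3r^2/8}\tau^{-1}\,d\tau$ finite.

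The one place that genuinely needs the decomposition machinery of Proposition~\ref{prop3.3}, rather than the cleaner Corollary~\ref{cor3.2} argument, is the Carleson estimate when $k+2m\ge 1$ but we still want to keep the problem at the level of $V_1$ itself (degree-zero weight) before passing to $\nabla^{k'}$: the cut-off splitting $G_1=\chi G_1+(1-\chi)G_1$ around the ball $B(y_0,5r)$, the heat-function comparison $W_t-\Delta W=\chi G_1$, and the tail sum over annuli $qr\le|x-y|\le(q+1)r$ all transplant verbatim, with every kernel exponent raised by $k'$. The main obstacle is therefore bookkeeping: one must check that the extra $(\sqrt{t-\tau})^{-k'}$ never destroys integrability in $\tau$, which it does not because the $\tau$-weight $t^{k'/2}$ exactly compensates it on the near-diagonal piece $t/2\le\tau\le t$ (where $t-\tau$ is small) while on $0\le\tau\le t/2$ one has $t-\tau\sim t$ and the Gaussian plus annulus summation absorb everything — so no new analytic difficulty arises, only the need to repeat the three-region ($\tau\le t/2$ with $t\le(r/2)^2$; $\tau\le t/2$ with $t>(r/2)^2$; $t/2\le\tau\le t$ with $t>(r/2)^2$) split of \eqref{3.38}--\eqref{3.42} with the derivative weights in place.
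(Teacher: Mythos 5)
Your overall strategy (Duhamel formula, kernel bounds from Lemma 2.1, split at $\tau=t/2$, reduce the Carleson part to the $L^\infty$ part as in \eqref{5.3}--\eqref{5.4}) is the natural one, and it is the route the paper gestures at (the paper gives no proof, only ``similar to Corollary 3.8, see [du1]''). However, two of your concrete steps fail, and they fail precisely at the point where this corollary is harder than both Proposition 3.5 and Corollary 3.8.

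First, the near-diagonal piece. On $t/2\le\tau\le t$ you propose to bound the integrand by $(t-\tau)^{-(k+2m+1)/2}\|G_1\|_{L^\infty}$ and then ``integrate in $\tau$''; but $\int_{t/2}^t(t-\tau)^{-(k+2m+1)/2}\,d\tau$ diverges as soon as $k+2m\ge 1$. Proposition 3.5 closes only because for $k=m=0$ the kernel singularity is $(t-\tau)^{-1/2}$, which is integrable. For higher derivatives one cannot put all the derivatives on the kernel near $\tau=t$; one must move them onto $G_1$, keeping only the single divergence on the kernel, and then one needs quantitative bounds on $\partial_t^j\nabla^i G_1$. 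This is exactly why the companion statement Corollary 3.12 carries the weighted derivatives of $G_2$ on its right-hand side, and why the cited references prove such estimates by induction on $k$, distributing derivatives over the two factors of the quadratic nonlinearity $G=u\otimes u$ via Leibniz. As literally stated, with only $\|G_1\|_{Y_{T^*}}$ on the right, the estimate is not provable for a general tensor $G_1$, and your argument does not supply the missing regularity of $G_1$.

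Second, the time derivatives. The identity $\partial_t^m=\Delta^m$ that Corollary 3.8 uses is valid only for caloric functions; here $\partial_tV_1=\Delta V_1+\mathbb{P}\nabla\cdot G_1$, so $\partial_t^mV_1=\Delta^mV_1+\sum_{j=0}^{m-1}\Delta^{j}\partial_t^{m-1-j}\mathbb{P}\nabla\cdot G_1$. Your claim that the boundary terms from differentiating $\int_0^t$ vanish ``because $S(t-\tau)$ vanishes to all orders as $\tau\to t^-$'' is incorrect: $S(0)=\mathrm{Id}$, so $\partial_t\int_0^tS(t-\tau)F(\tau)\,d\tau=F(t)+\int_0^t\Delta S(t-\tau)F(\tau)\,d\tau$, and the term $\mathbb{P}\nabla\cdot G_1(t)$ (and its further derivatives) must be estimated. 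Again this forces derivative bounds on $G_1$ into the hypothesis or into an inductive scheme. Until both points are addressed, the proof does not close for any $(m,k)\neq(0,0)$.
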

\begin{proof}
This results can be verified similar to Corollary 5.1. See also in \cite{du1}.
\end{proof}

\begin{corr}
Let  Let $V_2$ and $G_2$  as given by Corollary 3.6,
  then for any positive integers $m$ and $k$, we have
 \begin{equation} \label{5.6}
\|t^{\frac{m}{2}+k}\partial_t^k\nabla^mV_2\|_{X_{T^* }}\lesssim  \|t^{\frac{m}{2}+k}\partial_t^k\nabla^m G_2\|_{Y_{T^* }}.
\end{equation}
\end{corr}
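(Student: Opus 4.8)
The plan is to push the operator $t^{\frac{m}{2}+k}\partial_t^k\nabla^m$ through the Duhamel representation of $V_2$, i.e. to re-run the argument behind Proposition~\ref{prop3.3} and Corollary~\ref{cor3.4} with these derivatives and this time-weight attached. Since $V_2$ solves a system of type \eqref{3.22}, it is given by $V_2(t)=\int_0^tS(t-\tau)\,\mathbb{P}\,\nabla\cdot G_2(\tau)\,d\tau$. The spatial derivatives are harmless: $\nabla^m$ commutes with $S(t-\tau)$, $\mathbb{P}$ and $\nabla\cdot$, so $\nabla^m V_2$ is again the solution of \eqref{3.22} with tensor forcing $\nabla^m G_2$, and it suffices to understand $\partial_t^k$ applied to $\int_0^tS(t-\tau)\mathbb{P}\nabla\cdot H(\tau)\,d\tau$ with $H:=\nabla^m G_2$, then restore the weight.

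For the time derivatives I would split the Duhamel integral at $\tau=t/2$. On the near part $\tau\in[t/2,t]$ I would substitute $\sigma=t-\tau$ and let $\partial_t^k$ act on $H(t-\sigma)$; this produces $(\partial_\tau^kH)(t-\sigma)$ together with boundary terms at $\sigma=t/2$. Since there $\tau$ is comparable to $t$, the external factor $t^{\frac m2+k}$ is comparable to $\tau^{\frac m2+k}$, and this contribution reduces to estimating $\int_0^t S(t-\tau)\mathbb{P}\nabla\cdot\big(\tau^{\frac m2+k}\partial_\tau^k\nabla^m G_2\big)(\tau)\,d\tau$, to which \eqref{3.23} applies directly and yields $\|t^{\frac m2+k}\partial_t^k\nabla^m G_2\|_{Y_{T^*}}$. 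On the far part $\tau\in[0,t/2]$ the kernel $S(t-\tau)$ is smooth; I would integrate by parts in $\tau$ $k$ times, using $\partial_\tau S(t-\tau)=-\partial_t S(t-\tau)=-\Delta S(t-\tau)$, rewriting this part as $\int_0^{t/2}S(t-\tau)\mathbb{P}\nabla\cdot(\partial_\tau^k\nabla^m G_2)(\tau)\,d\tau$ plus boundary terms: those at $\tau=t/2$ are again at a time comparable to $t$, while those at $\tau=0$ are of the form $\Delta^{j}S(t)\,\mathbb{P}\nabla\cdot\big(\partial_\tau^{k-1-j}\nabla^m G_2\big)|_{\tau=0}$, i.e. pure heat evolutions of a fixed datum. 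The main far integral is then estimated as in \eqref{3.25}--\eqref{3.27} and \eqref{5.2}, via the pointwise bounds of Lemma~2.1 and Lemma~2.2.

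The second (Carleson-type, $L^2$) half of the $X_{T^*}$-norm I would treat exactly as in Proposition~\ref{prop3.3}: freeze $B(y_0,r)$, insert the cut-off $\chi(y/r)$ of \eqref{3.28}, split into a ``local'' and a ``far'' piece, bound the local piece through the auxiliary heat function $W$ solving $W_t-\Delta W=\chi\,\mathcal G$ (with $\mathcal G$ the weighted, differentiated tensor forcing) via the energy identity $\|\nabla W\|_{L^2(Q(y_0,r))}^2\lesssim\|W\|_{L^\infty}\,\|\chi\,\mathcal G\|_{L^1}$ together with the sup-bound on $W$, and bound the far piece from the polynomial kernel decay; the extra derivatives are distributed precisely as above. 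The boundary and initial-layer terms left over from the integrations by parts in $\tau$ are handled separately, using the pointwise bounds on $S(t)$ and $S(t/2)$ together with Corollaries~\ref{cor3.2} and \ref{cor3.4} for the lower-order weighted quantities.

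The main obstacle, and the reason this does not follow in one line from Corollary~\ref{cor3.4}, is the interaction of the \emph{external} weight $t^{\frac m2+k}$ with the time integration: it cannot simply be moved inside $\int_0^t\cdots\,d\tau$, because on the region $\tau\le t/2$ the factors $t^{\frac m2+k}$ and $\tau^{\frac m2+k}$ genuinely differ, and a crude $L^\infty$-in-$\tau$ bound on $\partial_\tau^k\nabla^m G_2$ there is not integrable against the singular factor $(t/\tau)^{\frac m2+k}$. One must instead use the extra smoothing carried by the differentiated heat kernel (Lemma~2.2) and, crucially, the Carleson structure built into $\|\cdot\|_{Y_{T^*}}$, so that the near-origin time contributions are controlled by a space-time average rather than by a supremum. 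The other delicate point is to verify that all the boundary and initial-layer terms produced by differentiating the variable upper limit and by the integrations by parts are again dominated by $\|t^{\frac m2+k}\partial_t^k\nabla^m G_2\|_{Y_{T^*}}$; this is where the lower-order weighted estimates (Corollaries~\ref{cor3.2}, \ref{cor3.4}) and the identity $\partial_t^ju=\Delta^ju$ for heat flows are needed.
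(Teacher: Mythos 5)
The paper gives no proof of this corollary at all --- it is stated bare, with the preceding corollary merely asserting ``verified similar to Corollary 3.8, see also \cite{du1}'' --- so there is nothing to compare line by line; your overall route (commute $\nabla^m$ through the Duhamel formula, split the time integral at $\tau=t/2$, transfer $\partial_t^k$ onto $G_2$ where $\tau\sim t$, and re-run the Carleson argument of Proposition 3.5 with the cut-off $\chi(y/r)$) is exactly the argument the paper is implicitly invoking.

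There is, however, one step in your sketch that would fail as written: the integration by parts in $\tau$ on the far region $[0,t/2]$. The boundary terms at $\tau=0$ involve $\partial_\tau^{j}\nabla^m G_2\big|_{\tau=0}$, and the $Y_{T^*}$-type norms on the right-hand side give no control whatsoever of $G_2$ at $\tau=0$: membership in $\mathbb{Y}_{T^*}$ only forces $\|G_2(\tau)\|_{L^\infty}\lesssim \tau^{-1}$, and in the application $G_2$ is quadratic in the solution and genuinely singular there, so these ``pure heat evolutions of a fixed datum'' need not exist, let alone be dominated by $\|t^{\frac m2+k}\partial_t^k\nabla^m G_2\|_{Y_{T^*}}$. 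The standard repair, consistent with how Corollaries 3.8 and 3.11 are proved, is to \emph{not} integrate by parts on $[0,t/2]$: there $t-\tau\geq t/2$, so one leaves all $k$ time derivatives and all $m+1$ space derivatives on the kernel and uses Lemma 2.2, each derivative costing a factor $(t-\tau)^{-1}\sim t^{-1}$ or $(t-\tau)^{-1/2}\sim t^{-1/2}$ that the external weight $t^{\frac m2+k}$ (plus the extra $t^{1/2}$ or $r$ carried by the $X_{T^*}$-norm) absorbs, after which the estimate closes exactly as in \eqref{3.25} and \eqref{3.39}--\eqref{3.40}. Transferring derivatives to $G_2$ is then needed only on $[t/2,t]$, where no $\tau=0$ boundary term arises. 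Note that with this (correct) treatment the far piece is controlled by $\|G_2\|_{Y_{T^*}}$ and the intermediate boundary terms at $\tau=t/2$ by the lower-order weighted norms, so the inequality one actually obtains has a sum over $k'\leq k$, $m'\leq m$ on the right --- the form in which the estimate is used in Definition 4.2 --- rather than the single top-order term stated.
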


\section{Proof of Theorem 1.3 and Theorem 1.4}

We first consider the eigenvalue problem
\begin{equation}\label{4.1}
\begin{cases}
\nabla \times S=\mu S
\\
\nabla \cdot S=0.
\end{cases}
\end{equation}

Suppose that S is a periodic function on $\mathbb{T}^3$ such that
\begin{equation} \label{4.2}
\int_{\mathbb{T}^3} S(x) d x=0,
\end{equation}
then it is easy to see that in the space of divergence free vector field
\begin{displaymath}
(\nabla \times)^{-1}=(-\Delta)^{-1} \nabla \times.
\end{displaymath}
So $(\nabla \times)^{-1}$ is a compact operator, by the theory of eigenvalues of compact operators, we conclude that $(\ref{4.1})$ has eigenvalues $\{ \lambda_j \}_{j \in \mathbb{Z}}$ such that
\begin{equation} \label{4.3}
\cdots<\lambda_{-N}<\cdots<\lambda_{-1}<0<\lambda_{1}<\cdots<\lambda_{N}<\cdots
\end{equation}

Therefore, we can write
\begin{equation} \label{4.4}
u_0=\sum_{j=-\infty}^{+\infty} S_j
\end{equation}
where $S_j$ are the eigenfunctions with eigenvalues $\lambda_j$.

Take one more differentiation of $(\ref{4.1})$, we see that
\begin{equation} \label{4.5}
-\Delta S={\mu}^2 S.
\end{equation}
So $\mu^2$ is an eigenvalue of $-\Delta$ and S is an eigenvector of $-\Delta$. Thus, $(\ref{4.4})$ is nothing but a Fourier series expansion.

We are now ready to prove our theorem. Without loss of generality, we assume $\lambda\ge 0$, otherwise, a reflection $x\to -x$ reduce to this case.

Let
\begin{equation} \label{4.6}
u_{0+}=\sum_{j=1}^{+\infty} S_j, \  u_{0-}=\sum_{j=-\infty}^{-1} S_j
\end{equation}
then it is easy to see that
\begin{equation} \label{4.7}
u_{0+}=\frac{(u_0+(-\Delta)^{-\frac{1}{2}} \nabla \times {u_0})}{2},
\end{equation}
\begin{equation} \label{4.8}
u_{0-}=\frac{(u_0-(-\Delta)^{-\frac{1}{2}} \nabla \times {u_0})}{2},
\end{equation}
\begin{equation} \label{4.9}
(\sqrt{-\Delta}+\lambda) u_{0-}=\sum_{j=-\infty}^{-1} (-\lambda_j+\lambda) S_j = - \nabla \times u_{0-}+\lambda u_{0-}.
\end{equation}\

Therefore, there holds
\begin{eqnarray} \label{4.10}
&&{\|u_{0-}\|}_{{BMO}^{-1}} \lesssim {\|(\sqrt{-\Delta}+\lambda) u_{0-}\|}_{{BMO}^{-2}}\\ \nonumber
& & ={\|\nabla \times u_{0-}-\lambda u_{0-}\|}_{{BMO}^{-2}} \lesssim {\|\nabla \times u_{0}-\lambda u_{0}\|}_{{BMO}^{-2}} \lesssim \varepsilon {\langle \lambda \rangle}^{-b}.
\end{eqnarray}
In a similar way, we also have
\begin{eqnarray} \label{4.100}
&&{\|(\sqrt{-\Delta}-\lambda) u_{0+}\|}_{{BMO}^{-2}} \\ \nonumber
&&={\|\nabla \times u_{0+}-\lambda u_{0+}\|}_{{BMO}^{-2}}   \lesssim   {\|\nabla \times u_{0}-\lambda u_{0}\|}_{{BMO}^{-2}}  \lesssim   \varepsilon {\langle \lambda \rangle}^{-b}.
\end{eqnarray}

When $1\le \lambda_j\le \lambda/2$ or $\lambda_j\ge 2\lambda$, the operator $\sqrt{-\Delta}-\lambda$ is invertible.
Denote
\begin{equation}
u_{1+}=\sum_{1\le \lambda_j\le \lambda/2} S_j+\sum_{\lambda_j\ge 2\lambda}S_j,
\end{equation}
\begin{equation}
u_{2+}=\sum_{\lambda/2<\lambda_j< 2\lambda} S_j.
\end{equation}
Therefore we get in a similar way that
\begin{equation}\label{4.14}
\|u_{1+}\|_{BMO^{-1}} \lesssim \varepsilon {\langle \lambda \rangle}^{-b}.
\end{equation}

Denote
\begin{equation}
u_{01}=u_{0}-u_{2+}=u_{0-}+u_{1+},
\end{equation}
and combining \eqref{4.10} and \eqref{4.14}, we have
\begin{equation}\label{4.16}
\|u_{01}\|_{BMO^{-1}}\lesssim  \varepsilon {\langle \lambda \rangle}^{-b}.
\end{equation}

Let
\begin{equation}\label{4.17}
  U=u+v,
\end{equation}
with
\begin{equation}\label{4.18}
u=e^{t\Delta} u_{2+}.
\end{equation}
Then we rewrite the system \eqref{1.1} as the following system with small data:
\begin{equation}\label{4.19}
\begin{cases}
v_t+(v \cdot \nabla)v+(u \cdot \nabla)v+(v \cdot \nabla)u+(u \cdot \nabla)u+\nabla P=\Delta v,
\\
t=0:\ v=u_{01}.
\end{cases}.
\end{equation}

\subsection{Step 1: Short time existence}
  The purpose of the short time existence result is just to break the scaling. We shall study our problem by the classical fixed point argument. \\
\indent First, we introduce the following space:
\begin{deff}
Let   $T_1>0$ be  a given constant and $\epsilon_0$  be a small constant, we say $f\in \mathbb{E}_{T_1, \epsilon_0}$, if the following holds:
\begin{itemize}
  \item[(i):] $f(t,x)$ is a periodic function of $x$ on $\mathbb{T}^3$; %and satisfying $\int_{\mathbb{T}^3}f(x)dx=0$;
   \item[(ii):] $\nabla \cdot f=0$,  $\|f\|_{ \mathbb{X}_{T_1} } \leq
 \epsilon_0<\lambda>^{-b}$.
\end{itemize}
%For simplicity, we write $ \|f\|_{ \mathbb{X}_1 }$ as $ \|f\|_{ \mathbb{X} }$ and   $\mathbb{E}_{1,\epsilon}$ as   $\mathbb{E}_{\epsilon}.$
\end{deff}
We define
$v=  \mathfrak{F}\tilde{v}$ by solving the following linear equation with $\tilde{v}\in \mathbb{E}_{T_1,\epsilon_0} $
\begin{equation}
\begin{cases}
v_t -\Delta v +\nabla P=-((u+\tilde{v}) \cdot \nabla)(u+\tilde{v}),\\
t=0:\ v=u_{01}.
\end{cases}
\end{equation}

Let $
g=u+\lambda {\Delta}^{-1}(\nabla \times u),
$
then
\begin{equation} \label{4.21}
\nabla \times u- \lambda u= \nabla \times g
\end{equation}
and
\begin{equation} \label{4.22}
u=g-\lambda {\Delta}^{-1} (\nabla \times u).
\end{equation}

By Proposition 3.2 and Proposition 3.5, we get
\begin{eqnarray} \label{4.23}
&& \|v\|_{X_{T_1} } \lesssim \|(u+\tilde{v})\otimes(u+\tilde{v}) \|_{Y_{T_1} } + {\|u_{0-}\|}_{{BMO}^{-1}}
 \\\nonumber
&& \lesssim \left( {\| u \|}_{X_{T_1}}+{\| \tilde{v} \|}_{X_{T_1}} \right)^2 + \varepsilon {\langle \lambda \rangle}^{-b}
\lesssim \left( {\| u \|}_{X_{T_1}}+\varepsilon_0 {\langle \lambda \rangle}^{-b} \right)^2 + \varepsilon {\langle \lambda \rangle}^{-b}
.
\end{eqnarray}

By Proposition 3.3 and Corollary 3.4, we have
\begin{eqnarray} \label{4.24}
&& {\|u \|}_{X_{T_1}} \lesssim {\|g \|}_{X_{T_1}}+\lambda {\| {\Delta}^{-1} \nabla \times u \|}_{X_{T_1}} \\ \nonumber
&& \lesssim {\| \nabla \times u_0-\lambda u_0 \|}_{{BMO}^{-2}}+\lambda {T_1}^{\frac{1}{4}} {\| \sqrt{-\Delta} {\Delta}^{-1} \nabla \times u_0 \|}_{{BMO}^{-1}} \\ \nonumber
& &\lesssim \varepsilon {\langle \lambda \rangle}^{-b}+\lambda {T_1}^{\frac{1}{4}} M_0.
\end{eqnarray}

We take $\varepsilon_0=C(M_0)^{-1}\varepsilon$ and take $T_1$   as
\begin{equation}\label{4.25}
T_1= C(M_0)\varepsilon^2  {\langle \lambda \rangle}^{-2b-4}
\end{equation}
where the constant  $C(M_0)$ is independent of $\lambda$ and $\varepsilon$ and $C(M_0)$ is choosing suitably small.

Then by using \eqref{4.23}-\eqref{4.25} we get
\begin{equation} \label{4.26}
\|v\|_{X_{T_1} } \leq \varepsilon_0 {\langle \lambda \rangle}^{-b}.
\end{equation}

 \indent By a similar process, we take $v_1$ be the corresponding solution to $(\ref{4.19})$ where $\tilde{v_1} \in E_{T_1,\varepsilon_0}$, and $v_2$ be the solution with $\tilde{v_2} \in E_{T_1,\varepsilon_0}$.
 Denote
 \begin{equation}
 \bar{\tilde{v}}=\tilde{v_1}-\tilde{v_2},\ \bar{v}=v_1-v_2,\ \bar{q}=P_1-P_2,
 \end{equation}
 then there hold
 \begin{equation}
 \bar{v}_t-\Delta \bar{v}+\nabla \bar{q}=(\tilde{v_2} \cdot \nabla) \bar{\tilde{v}}+(\bar{\tilde{v}} \cdot \nabla) \tilde{v_1}+(u \cdot \nabla \bar{\tilde{v}})+(\bar{\tilde{v}} \cdot \nabla)u.
 \end{equation}

 Therefore, we get
 \begin{equation} \label{4.20}
 {\| \bar{v} \|}_{X_{T_1}} \lesssim  {\| \bar{\tilde{v}} \|}_{X_{T_1}}  \left(  {\| \tilde{v_2} \|}_{X_{T_1}} + {\| \tilde{v_1} \|}_{X_{T_1}} + {\| u \|}_{X_{T_1}} \right).
 \end{equation}
 Since $\epsilon_0$ is small enough, then
 \begin{equation} \label{4.21}
  {\| \tilde{v_2} \|}_{X_{T_1}}+   {\| \tilde{v_1} \|}_{X_{T_1}}+  {\| u \|}_{X_{T_1}} <<1.
 \end{equation}
Combining \eqref{4.20} and \eqref{4.21},  we conclude that the mapping is a contraction.

  Our next goal is to prove the space and time analyticity of the solution.

 \begin{deff}
 Let $g$ be a function defined on $\mathbb{R}^n\times [0,T_1)$,  for any integers $M, K\geq0$, we say $g\in \mathbb{X}_{T_1}^{M,K}$, if
\begin{multline}\label{4.22}
\|g\|_{ \mathbb{X}_{T_1}^{M,K}}\triangleq\sum\limits_{m=0}^M\sum\limits_{k=0}^K\bigg(\sup\limits_{t}t^{\frac{k+1}{2}+m}\|\partial_t^m\nabla^{k}g\|_{L^\infty(\mathbb{R}^3)}
\\
+\sup\limits_{y_0\in \mathbb{R}^3,r>0}\bigg(r^{-3}\int_{Q(y_0,r)}|t^{\frac{k}{2}+m}\partial_t^m\nabla^{k} g|^2dy  dt
\bigg)^{\frac12}\bigg)<+\infty.
\end{multline}
\end{deff}

Subsequently, we give  the following function spaces
\begin{deff}
Let   $T>0$ be  a given constant and $\epsilon_0$ as in Theorem 1.3,   for any integers $M,K>0$ we say $f\in \mathbb{E}^{M,K}_{T, \epsilon_0}$, if the following holds:
\begin{itemize}
  \item[(i):] $f(x,t)$ is a space  periodic function on $\mathbb{T}^3$; %and satisfying $\int_{\mathbb{T}^3}f(x)dx=0$;
  \item[(ii):] $\nabla\cdot f=0$, and $\|f(x,t)\|_{ \mathbb{X}_{T}^{M,K}} \leq
 \epsilon_0<\lambda >^{-b}$.
  %  \item[(iii):] $\nabla \cdot f=0\quad \mbox{and} \quad \|(\mathbb{I}+\lambda \Delta^{-1}\nabla \times)f\|_{ \mathbb{X^{M,K}_T} }  <
% \epsilon_0$.
\end{itemize}
%For simplicity, we write $ \|f\|_{ \mathbb{X}_1 }$ as $ \|f\|_{ \mathbb{X} }$ and   $\mathbb{E}_{1,\epsilon}$ as   $\mathbb{E}_{\epsilon}.$
\end{deff}

Repeat the above process, we can show that $v \in \mathbb{E}^{M,K}_{T_1,\varepsilon_0}$.

\subsection{Step 2:  Local in time existence}

Now since $v$ is analytic at time $T_1$, so it can be extended to some time interval $[T_1,T_2]$. We shall give an estimate of $T_2$. For that purpose, we only need to give an apriori estimate for the ${\|v(t)\|}_{L^{\infty}}$ on the time interval $[T_1,T_2]$.

By Proposition 3.3 and Corollary 3.6, we get, for $0<a<1$,
\begin{equation}
t^{\frac{1-a}{2}} {\| {(-\Delta)}^{-\frac{a}{2}} v(t,\cdot) \|}_{L^{\infty}} \lesssim \varepsilon {\langle \lambda \rangle}^{-b}.
\end{equation}
Moreover, take $t=T_1$, we get
\begin{eqnarray*}
{\| {(-\Delta)}^{-\frac{a}{2}} v(T_1,\cdot) \|}_{L^{\infty}} & \lesssim & \varepsilon {\langle \lambda \rangle}^{-b} { T_1}^{-\frac{1-a}{2}} \\
& \lesssim & \varepsilon {\langle \lambda \rangle}^{-b} {\langle \lambda \rangle}^{(b+2)(1-a)} \varepsilon^{-(1-a)}.
\end{eqnarray*}

Taking $1-a=\frac{b}{b+2}$, we get
\begin{equation}\label{4.100}
{\| {(-\Delta)}^{-\frac{a}{2}} v(T_1,\cdot) \|}_{L^{\infty}} \leq C_0(M_0) \varepsilon^a
\end{equation}

By Lemma 2.4, we rewrite $(\ref{4.19})$ as
\begin{equation} \label{4.230}
\begin{cases}
v_t+(v \cdot \nabla)v+(u \cdot \nabla)v+(v \cdot \nabla)u+\nabla \left( P+\frac{{|u|}^2}{2} \right)-\Delta v=u \times(\nabla \times u),
\\
t=T_1:\ v=v(T_1,x).
\end{cases}
\end{equation}
By the divergence free condition and Duharmel's formula, we get
\begin{eqnarray}\label{101}
v(t)=e^{\Delta(t-T_1)}v(T_1)+\int_{T_1}^t\tilde{\mathbb{P}}(t-\tau ,\cdot)\ast (u\times(\nabla\times u))(\tau )d\tau \\\nonumber
+\int_{T_1}^t\nabla\tilde{\mathbb{P}}(t-\tau )\ast
(v\otimes v+v\otimes u+u\otimes v)(\tau)d\tau\\\nonumber
=I +II+III.
\end{eqnarray}
Where $\tilde{\mathbb{P}}$ is defined in Lemma 2.1.

Noting \ref{4.100}, it follows from Corollary 3.9 (in which we take $m=k=0$, $\alpha =a$)that
\begin{equation}
\|I\|_{L^\infty}\lesssim {(t-T_1)}^{-\frac{a}{2}} \|v(T_1,\cdot)\|_{B^{-a}_{\infty ,\infty}}\lesssim {(t-T_1)}^{-\frac{a}{2}} \varepsilon^a
\end{equation}
Besides, we have
\begin{eqnarray*}
&&u \times(\nabla \times u)= u \times(\nabla \times u-\lambda u) \\
&&= \sum_{\lambda/2<\lambda_j<2\lambda}e^{-\lambda_j^2t} S_j \times \sum_{\lambda/2<\lambda_k<2\lambda} e^{-\lambda_k^2t}(\lambda_k-\lambda)S_k \\
&&= \sum_ {\lambda/2<\lambda_j ,\lambda_k<2\lambda}e^{-(\lambda_j^2+\lambda_k^2)t}(\lambda_k-\lambda) S_j \times S_k \\
&&= \sum_{\lambda/2<\lambda_j ,\lambda_k<2\lambda}e^{-(\lambda_j^2+\lambda_k^2)t} \frac{\lambda_k-\lambda}{\lambda_j+\lambda_k} \left[ \left( \nabla \times S_j \right)\times S_k+S_j \times \left( \nabla \times S_k \right) \right] \\
&&= \nabla \left( \sum_{\lambda/2<\lambda_j ,\lambda_k<2\lambda}
 e^{-(\lambda_j^2+\lambda_k^2)t}\frac{\lambda_k-\lambda}{\lambda_j+\lambda_k} S_j \cdot S_k \right)\\
&&+\nabla \cdot \left[ \sum_{\lambda/2<\lambda_j ,\lambda_k<2\lambda} \frac{e^{-(\lambda_j^2+\lambda_k^2)t}}{\lambda_j+\lambda_k} \left( S_j \otimes \left( \nabla \times S_k- \lambda S_k \right) + \left( \nabla \times S_k - \lambda S_k \right) \otimes S_j \right) \right].
\end{eqnarray*}
Then it follows from  Corollary 3.9  (in which we take $k=m=0$ $\alpha=1-\kappa$ )
that
\begin{equation}
\|II\|_{L^\infty}\lesssim \int_{T_1}^t {(t-\tau)}^{-\frac{1-\kappa}{2}} {\|F(\tau)\|}_{B^{\kappa}_{\infty,\infty}} d \tau
\end{equation}
here, we take $0<\kappa <1-a$ and
\begin{displaymath}
F(\tau)=\sum_{\lambda/2<\lambda_j ,\lambda_k<2\lambda} e^{-(\lambda_j^2+\lambda_k^2)t}\frac{1}{\lambda_j+\lambda_k} \left( S_j \otimes \left( \nabla \times S_k- \lambda S_k \right) + \left( \nabla \times S_k - \lambda S_k \right) \otimes S_j \right).
\end{displaymath}
On the other hand, we have
\begin{eqnarray}
\|III\|_{L^\infty}&\le& \int_{T_1}^t\|\nabla\tilde{\mathbb{P}}(t-\tau,\cdot )\|_{L^1}
\|(v\otimes v+v\otimes u+u\otimes v)(\tau)\|_{L^\infty}d\tau\\\nonumber
&\lesssim& \int_{T_1}^t(t-\tau )^{-\frac{1}{2}}
\|(v\otimes v+v\otimes u+u\otimes v)(\tau)\|_{L^\infty}d\tau.
\end{eqnarray}

Summarizing, we get
\begin{eqnarray}
&& {\|v(t,\cdot)\|}_{L^{\infty}} \leq C_0{(t-T_1)}^{-\frac{a}{2}} \varepsilon^a+C_0 \int_{T_1}^t {(t-\tau)}^{-\frac{1}{2}}  {\|v(\tau,\cdot)\|}_{L^{\infty}}^2d\tau
 \\\nonumber
&&
+C_0\int_{T_1}^t (t-\tau)^{-1/2}\|v(\tau,\cdot )\|_{L^\infty} \|u(\tau,\cdot)\|_{L^\infty} d\tau +C_0\int_{T_1}^t {(t-\tau)}^{-\frac{1-\kappa}{2}} {\|F(\tau)\|}_{B^{\kappa}_{\infty,\infty}} d \tau.
\end{eqnarray}

By Corollary 2.6, we get
\begin{eqnarray*}
{\|F(\tau,\cdot)\|}_{B^{\kappa}_{\infty,\infty}} & \leq & {\|\nabla \times u-\lambda u\|}_{B^{\kappa -1+a}_{\infty,\infty}}{\|u\|}_{B^{-a}_{\infty,\infty}} \\
& \leq & \tau^{-\frac{1+a+\kappa}{2}} {\|\nabla \times u_0-\lambda u_0\|}_{BMO^{-2}} {\tau}^{-\frac{1-a}{2}} {\|u_0\|}_{BMO^{-1}} \\
& \leq & {(\tau-T_1)}^{-\frac{1+a+\kappa}{2}} T_1^{-\frac{1-a}{2}}{\|\nabla \times u_0-\lambda u_0\|}_{{BMO}^{-2}} {\|u_0\|}_{{BMO}^{-1}} \\
& \lesssim & {(\tau-T_1)}^{-\frac{1+a+\kappa }{2}} C(M_0)
\varepsilon^a.
\end{eqnarray*}
Therefore, there holds
\begin{eqnarray*}
 && \int_{T_1}^t {(t-\tau)}^{-\frac{1-\kappa}{2}} {\|F(\tau)\|}_{B^{\kappa}_{\infty,\infty}} d \tau
 \\\nonumber
 & &\lesssim  \varepsilon^a \int_{T_1}^t {(t-\tau)}^{-\frac{1-\kappa}{2}}{(\tau-T_1)}^{-\frac{1+a+\kappa }{2}}d\tau\\\nonumber
 & &\lesssim \varepsilon^a{(t-T_1)}^{-\frac{1-\kappa}{2}}\int_{T_1}^{T_1+\frac{t-T_1}{2}} {(\tau-T_1)}^{-\frac{1+a+\kappa }{2}}d\tau
 \\\nonumber
 & &
 +\varepsilon^a
{(t-T_1)}^{-\frac{1+a+\kappa }{2}}\int_{T_1+\frac{t-T_1}{2}}^t {(t-\tau)}^{-\frac{1-\kappa}{2}}d\tau \lesssim  \varepsilon^a{(t-T_1)}^{-\frac{a}{2}}.
\end{eqnarray*}

We also have
\begin{eqnarray*}
&&\int_{T_1}^t (t-\tau)^{-1/2}\|v(\tau,\cdot)\|_{L^\infty} \|u(\tau,\cdot)\|_{L^\infty} d\tau  \\
&& \lesssim C_0\int_{T_1+(1-\delta)(t-T_1)}^t(t-\tau)^{-\frac{1}{2}}
\|u(\tau,\cdot)\|_{L^\infty}\|v(\tau,\cdot)\|_{L^\infty}d\tau\\\
&&+C(\delta)(t-T_1)^{-\frac{1}{2}}\int_{T_1}^{T_1+(1-\delta)(t-T_1)}
\|u(\tau,\cdot)\|_{L^\infty}\|v(\tau,\cdot)\|_{L^\infty}d\tau\\\
&&\lesssim C_1M_0(t-T_1)^{-\frac{a}{2}}\sup_t(t-T_1)^{\frac{a}{2}}\|v(\tau,\cdot)\|_{L^\infty}
\int_{T_1+(1-\delta)(t-T_1)}^t(t-\tau)^{-\frac{1}{2}}\tau^{-\frac{1}{2}}d\tau\\\
&&+C(\delta)(t-T_1)^{-\frac{a}{2}}\left(\int_{T_1}^{T_1+(1-\delta)(t-T_1)} \|u(\tau,\cdot)\|_{L^\infty}^2((\tau-T_1)^{\frac{a}{2}}\|v(\tau,\cdot)\|_{L^\infty})^2d\tau\right)^{1/2}.
\end{eqnarray*}
By let $(\tau-T_1)=\theta(t-T_1)$, we get
\begin{equation}
\int_{T_1+(1-\delta )(t-T_1)}^t(t-\tau)^{-\frac{1}{2}}\tau^{-\frac{1}{2}}d\tau\le
\int_{1-\delta }^1 (1-\theta)^{-\frac{1}{2}}\theta^{-\frac{1}{2}}ds
\end{equation}
which can be small if $\delta$ is made sufficiently small. We take
\begin{equation}
2C_1M_0\int_{T_1+(1-\delta)(t-T_1)}^t(t-\tau)^{-\frac{1}{2}}\tau^{-\frac{1}{2}}d\tau\le 1/2.
\end{equation}

On the other hand, it is easy to see
\begin{eqnarray}\int_{T_1}^t {(t-\tau)}^{-\frac{1}{2}}  {\|v(\tau,\cdot)\|}_{L^{\infty}}^2d\tau\le\left(\int_{T_1}^t {(t-\tau)}^{-\frac{1}{2}} \tau^{-a} d\tau\right) \sup_{T_1\le \tau\le t}
\tau^a|v(\tau)|_{L^{\infty}}^2.
\end{eqnarray}

Moreover, there also holds
\begin{eqnarray}
\int_{T_1}^t {(t-\tau)}^{-\frac{1}{2}} \tau^{-a} d\tau&\lesssim&{(t-T_1)}^{-\frac{1}{2}}\int_{T_1}^{T_1+\frac{t-T_1}{2}}  \tau^{-a} d\tau\\\nonumber
&&+{(t-T_1)}^{-a}\int_{T_1+\frac{t-T_1}{2}}^t {(t-\tau)}^{-\frac{1}{2}}  d\tau\lesssim {(t-T_1)}^{\frac{1}{2}-a}.
\end{eqnarray}

Thus, we get
\begin{eqnarray*}
\sup_{T_1\le t_1\le t}{(t_1-T_1)}^{\frac{a}{2}} {\|v(t_1,\cdot)\|}_{L^{\infty}}& \lesssim &   \varepsilon^a
+{(t-T_1)}^{\frac{1-a}{2}} \left( \sup\limits_\tau {\left( {(\tau-T_1)}^{\frac{a}{2}} {\|v(\tau,\cdot)\|}_{L^{\infty}} \right)}^2 \right.   \\
&&+   \left. \left(\int_{T_1}^{t} \|u(\tau,\cdot)\|_{L^\infty }^2\big((\tau-T_1)^{\frac{a}{2}}\|v(\tau,\cdot)\|_{L^\infty}\big)^2d\tau\right)^{\frac{1}{2}} \right).
\end{eqnarray*}

Take $T_2=T_1+1$, for $T_1\le t\le T_2$, we get
\begin{eqnarray}\label{4.44}
&&\sup_{T_1\le t_1\le t}{(t_1-T_1)}^{\frac{a}{2}} {\|v(t_1,\cdot)\|}_{L^{\infty}} \\\nonumber
&&\leq 2C_2 \left[ \varepsilon^a
+   \left(\int_{T_1}^{t} \|u(\tau,\cdot)\|_{L^\infty }^2((\tau-T_1)^{\frac{a}{2}}\|v(\tau,\cdot)\|_{L^\infty})^2d\tau\right)^{\frac{1}{2}}
 \right]
\end{eqnarray}
provided that
\begin{equation}\label{4.50}
C_2\sup_{T_1\le t_1\le t}{(t_1-T_1)}^{\frac{a}{2}} {\|v(t_1,\cdot)\|}_{L^{\infty}}\le \frac{1}{2}.
\end{equation}

Furthermore, we have
\begin{equation}
\left(\int_0^{t} \|u(\tau,\cdot)\|_{L^\infty}^2d\tau\right)^{\frac{1}{2}}\lesssim \|u_{2+}\|_{B^{-1}_{\infty ,2}}.
\end{equation}
However, $u_{2+}$ has only finite piece of the littlewood-Paley decomposition, thus
\begin{equation}
\|u_{2+}\|_{B^{-1}_{\infty ,2}}\lesssim \|u_{2+}\|_{B^{-1}_{\infty ,\infty}}\lesssim \|u_0\|_{BMO^{-1}}.
\end{equation}

Then from \eqref{4.44} and the Gronwall's inequality, we have
\begin{displaymath}
\sup\limits_{T_1 \leq t \leq T_2} {(t-T_1)}^{\frac{a}{2}} {\|v(t,\cdot)\|}_{L^{\infty}} \leq 4C_2 \varepsilon^a,
\end{displaymath}
provided that $\varepsilon$ is sufficiently small. This shows that the assumption \eqref{4.50} is reasonable. Therefore, we can extend our solution to time $T_2$.

By a similar argument, we can get
\begin{displaymath}
\sup\limits_{T_1 \leq t \leq T_2} {(t-T_1)}^{\frac{a}{2}+m+\frac{k}{2}}  {|\partial_t^m \nabla^k v(t,\cdot)|}_{L^{\infty}} \leq C_{m,k} \varepsilon^a
\end{displaymath}
for any $m \geq 0,\ k \geq 0$, which implies
\begin{displaymath}
 {\| \nabla^k v(T_2,\cdot)\|}_{L^{\infty}} \lesssim C(k,M_0) \varepsilon^a.
\end{displaymath}
\subsection{\bf Step 3: Global existence}
We now consider the solution on a fixed periodic $x \in \left[-\pi,\pi \right]^3$. To prove global existence on the time interval $[ T_2,+\infty )$, we only need to give an apriori $H^1$ bound of the solution $v$. We multiply the equation $(\ref{4.230})$ by $\Delta v$ and integration by parts to get
\begin{eqnarray*}
&&\frac{1}{2} {\| \nabla v(t,\cdot) \|}_{L^2}^2+\int_{T_2}^t  {\| \Delta v(\tau,\cdot) \|}_{L^2}^2 d \tau\\\nonumber
 & &\leq \frac{1}{2} {\| \nabla v(T_2,\cdot) \|}_{L^2}^2+\int_{T_2}^t  {\| \Delta v(\tau,\cdot) \|}_{L^2}^2  +{\| v \cdot \nabla v +u \cdot \nabla v+v \cdot \nabla u\|}_{L^2}^2 (\tau) d \tau,
\end{eqnarray*}
and
\begin{eqnarray*}
&&{\| v \cdot \nabla v +u \cdot \nabla v+v \cdot \nabla u\|}_{L^2} \\
& & \lesssim\left( {\|v\|}_{L^{\infty}}+{\|u\|}_{L^{\infty}} \right) {\|\nabla v\|}_{L^{2}}+{\|v\|}_{L^{\infty}}{\|\nabla u\|}_{L^{2}} \\
& &\lesssim \left( {\|v\|}_{H^{2}}^{\frac{1}{2}}{\|v\|}_{H^{1}} ^{\frac{1}{2}}+{\|u\|}_{L^{\infty}} \right) {\|\nabla v\|}_{L^{2}}+{\|\nabla u\|}_{L^{2}} {\|v\|}_{H^{2}}^{\frac{1}{2}}{\|v\|}_{H^{1}} ^{\frac{1}{2}}.
\end{eqnarray*}
By Poincare's inequality, we get
\begin{eqnarray*}
&&{\| v \cdot \nabla v +u \cdot \nabla v+v \cdot \nabla u\|}_{L^2}\\\nonumber
 & & \lesssim {\|\nabla v\|}_{L^{2}}^{\frac{3}{2}}  {\|\Delta v\|}_{L^{2}}^{\frac{1}{2}}+ {\|\nabla v\|}_{L^{2}}^{\frac{1}{2}} {\|\nabla u\|}_{L^{2}}  {\|\Delta v\|}_{L^{2}}^{\frac{1}{2}}+ {\|\nabla v\|}_{L^{2}}{\|u\|}_{L^{\infty}} \\
& &\lesssim {\|\nabla v\|}_{L^{2}}  {\|\Delta v\|}_{L^{2}}+{\|\nabla v\|}_{L^{2}}^{\frac{1}{2}} {\|\nabla u\|}_{L^{2}}  {\|\Delta v\|}_{L^{2}}^{\frac{1}{2}}+ {\|\nabla v\|}_{L^{2}}{\|u\|}_{L^{\infty}}.
\end{eqnarray*}

Therefore, we get
\begin{eqnarray*}
&&\frac{1}{2} {\|\nabla v (t,\cdot)\|}_{L^2}^2 + \int_{T_2}^t {\|\Delta v (\tau,\cdot)\|}_{L^2}^2 d \tau\\
 && \lesssim  \varepsilon^{2a}+\frac{1}{2}\int_{T_2}^t {\|\Delta v (\tau,\cdot)\|}_{L^2}^2 d \tau+\int_{T_2}^t {\|\Delta v (\tau,\cdot)\|}_{L^2}^2 {|\nabla v |}_{L^2} d \tau \\
&& +\int_{T_2}^t  {\|\nabla v \|}_{L^2}^2  {\|\nabla u \|}_{L^2}^4+ {\|\nabla v \|}_{L^2}^2 {\|u\|}_{L^{\infty}}^2 d \tau.
\end{eqnarray*}
We have
\begin{eqnarray*}
&&\int_{T_2}^t {\|\nabla u \|}_{L^2({\mathbb{T}}^3)}^4+ {\|u\|}_{L^{\infty}({\mathbb{T}}^3)}^2 d \tau\\
 && \lesssim \int_{T_2}^t {\|\nabla u \|}_{L^{\infty}({\mathbb{T}}^3)}^4+ {\|u\|}_{L^{\infty}({\mathbb{T}}^3)}^2 d \tau \\
&& \lesssim \int_{T_2}^t \left( {\tau}^{-4} {\|u_0\|}_{{BMO}^{-1}}+{\tau}^{-2} {\|u_0\|}_{{BMO}^{-2}} \right) d \tau \\
&& \leq  C M_0 \left( {T_2}^{-3}+{T_2}^{-1} \right)  \leq  C_3(M_0).
\end{eqnarray*}

Therefore, by a bootstrap argument, it follows from Gronwall's inequality that
\begin{displaymath}
{\|\nabla v (t,\cdot)\|}_{L^2}^2+\int_{T_2}^t {\|\Delta v (\tau,\cdot)\|}_{L^2}^2 d \tau \leq C_3(M_0) \varepsilon^{2a}.
\end{displaymath}
This shows that the solution is global.
\section*{Acknowledgement}  Yi Du was supported
by NSFC (grant No. 11471126). Yi Zhou was supported by Key Laboratory of Mathematics for Nonlinear Sciences (Fudan University), Ministry of Education of China, P.R.China.
Shanghai Key Laboratory for Contemporary Applied Mathematics, School of Mathematical Sciences, Fudan University, P.R. China, NSFC (grants No. 11421061), 973 program (grant No. 2013CB834100) and 111 project.

%%%%%%%%%%%%%%%%%%%%%%%%%%%%%%%%%%%%%%%%%%%%%%%%%%%%%%%%%%%%%%%%%%%%%%%%%%%%%%%%%%%%%%%%%%%%%%%%%%%%%%%%%%%%%%%%%%%%%%%%%%%%%%%%%%%%%%%%%5

\end{document}